\newcommand{\bR}{{\mathbb R}}
\newcommand{\cS}{{\mathcal S}}
\newcommand{\cK}{{\mathcal K}}
\newcommand{\cA}{{\mathcal A}}
\newcommand{\cG}{{\mathcal G}}
\newcommand{\cP}{{\mathcal P}}
\newcommand{\cL}{{\mathcal L}}
\newcommand{\cX}{{\mathcal X}}
\newcommand{\uu}{\mathbf{ u}}
\newcommand{\vv}{\mathbf{ v}}
\newcommand{\ww}{\mathbf{ w}}
\newcommand{\veps}{\varepsilon}
\newcommand{\rdiv}{{\rm div\,}}
\def\XXint#1#2#3{{\setbox0=\hbox{$#1{#2#3}{\int}$ }
\vcenter{\hbox{$#2#3$ }}\kern-.6\wd0}}
\newcommand{\ra}{\rightarrow}
\newcommand\norm[1]{\Arrowvert {#1}\Arrowvert}
\newtheorem{theorem}{Theorem}[section]
\newtheorem{lemma}[theorem]{Lemma}
\newtheorem{proposition}[theorem]{Proposition}
\newtheorem{remark}[theorem]{Remark}
\title {Graph based semi-supervised learning  using spatial Segregation theory}
\author{Farid Bozorgnia, Morteza Fotouhi,   Avetik Arakelyan and Abderrahim Elmoataz}
\address{CAMGSD, Department of Mathematics, Instituto Superior T\'{e}cnico, Lisbon, Portugal}
\email{faridb.bozorgnia@tecnico.ulisboa.pt}
 \address{Department of Mathematical Sciences, Sharif University of Technology, Tehran, Iran.}
\email{fotouhi@sharif.edu}
  \address{Institute of Mathematics, NAS of Armenia, 0019 Yerevan, Armenia }
\email{arakelyanavetik@gmail.com}
\address{University of Caen Normandy,   GREYC UMR CNRS 6072, France}
\email{abderrahim.elmoataz@unicaen.fr}
\thanks{F. Bozorgnia was supported by the FCT  fellowship SFRH/BPD/33962/2009 and by Marie Skłodowska-Curie grant agreement No. 777826 (NoMADS)}
\subjclass[2000]{}
\keywords{Free boundary, Semi-supervised learning, Laplace learning}
\begin{document}
\begin{abstract}
In this work we address graph based semi-supervised learning using the theory of the spatial segregation of competitive systems. First, we define a discrete counterpart over connected graphs by using direct analogue of the corresponding competitive system. This model turns out doesn't have a unique solution as we expected. Nevertheless, we suggest gradient projected and regularization methods to reach some of the solutions. Then we focus on a slightly different model motivated from the recent numerical results on the spatial segregation of reaction-diffusion systems. In this case we show that the model has a unique solution and propose a novel classification algorithm based on it. Finally,  we present numerical experiments showing the method is efficient  and comparable to other semi-supervised learning algorithms at high and low label rates.

\end{abstract}
\bigskip

\maketitle

\section{Introduction}

In this paper we consider a semi-supervised learning approach  which deals with the classification of a large number of unlabeled data when very few labels just are  available. In some  applications  such as  medical images, we have few
training examples which are   labeled.  The aim is  to find  efficient  algorithms with  good performance
  with these few  labeled examples. In this situation, geometric or topological properties of the unlabeled data has been used to propose and to   improve several algorithms.

A common way to use the unlabeled data in semi-supervised learning is to build a graph over the data e.g., in image classification.
To start, we  requires to construct  an adjacency matrix, or weight matrix  $W$, for the data set, which encodes the similarities between pairs of date nodes.
If our data set consists of $n$ points  $\cX=\{x_1,  x_2, \cdots ,x_n \}  \subset \mathbb{R}^d$, then the  weight matrix $W$   is an  $n \times n $  symmetric matrix, where the element  $w_{ij}$  represents the similarity between two data points  $x_i$  and $x_j$.
The similarity is always nonnegative and should be large when  $x_i$  and $x_j$ are close together spatially,
and small (or zero), when $x_i$  and $x_j$  are far apart.
As a  choice,  the weight matrix can be the  Gaussian weights
\[
 w_{ij}  =\textrm{exp}(-\frac{| x_{i}-x_{j}|}{2 \sigma^2}),
   \]
where $|\cdot|$ is the Euclidean norm and  $\sigma $ is a free parameter that controls the scale at which points are
connected by strong similarities  $w_{ij}$  in the weight matrix. In fact, the
weight matrix $W$  provides  the data set with a graph structure, where each pair of
points $(x_i,  x_j)$ is connected by an edge with edge weight $w_{ij}.$ Other choices
of weight matrix are possible,   such as the $k$-nearest neighbor graph, see \cite{Lux}.

Recently, many  works   aim to transpose and adapt
Partial Differential Equations (PDEs) on graphs. This  reformulation of continuous problems onto a graph
is such  that the solution behaves analogously to the continuous formulation see \cite{A6, DEL}.

In graph-based semi-supervised learning, we are given a few labeled data on the graph and  we aim  to extend these  labels from a given  set  to the rest of the nodes in graph in  a decisive manner.
To model propagating labels in  semi-supervised  learning,  it is   assumed   that  the learned labels   vary   smoothly  and not change fast  within high density regions of the graph (smoothness  assumptions).
Based on this assumption different approaches have  been proposed.
One of the pioneer methods  is {\it Laplace learning}, \cite{A19}.
Later it has been observed that the Laplace learning can give poor results in classification \cite{A18}.
The results are often poor because the solutions have localized spikes near the labeled points, while being almost constant far from them.
To overcome this problem several versions of Laplace learning algorithm  have been  proposed, for instance Laplacian regularization,  \cite{A1}, weighted Laplacian, \cite{Shi-Weighted, A14} and $p$-Laplace learning, \cite {A10, slepcev2019analysis}.
Also, the limiting  case in $p$-Laplacian when $p$ tends to infinity  is so  called Lipschitz learning is studied in \cite{K17} and similar to continuum PDEs  is related to finding  the absolutely minimal lipschitz extension of the training data.
Recently,   in \cite{A8}  another approach to increase   accuracy of Laplace learning  is   given  and called {\it Poisson learning}.

  To explain these methods,  let as before  $\cX={\{ x_1, \cdots, x_n}\}$ denotes the data points or equivalently vertices in a graph.
  We assume there is a subset of the nodes $ \Gamma={\{ x_1, \cdots, x_m}\}  \subset  \cX $  that their labels are given with  a label function $g:\Gamma\rightarrow\bR^k$.
It is further assumed that $y_i=g(x_i)\in \{e_1, \cdots , e_k\}$ where $e_i$ is the standard basis in $\mathbb{R}^k$ and represents the $i\textsuperscript{th}$ class.
In graph-based semi-supervised learning, we aim to  extend labels  to the rest of the vertices $\{x_{m+1}, \cdots ,x_n\}$.

 In Laplace learning algorithm the labels are extended by finding the minimizer   $ \uu: \cX\rightarrow \mathbb{R}^k$ for the following problem
 \begin{equation}\label{objective}
 \begin{cases}
&\min J_n(\uu):=\frac{1}{2}\sum\limits_{i=1}^{n}\sum\limits_{j=1}^{n} w_{ij} |\uu(x_{i}) - \uu(x_{j})|^{2}\\
&\text{subject to } \uu(x_i) =y_i,\quad  \text{ for } i=1, 2, \cdots ,m.
\end{cases}
\end{equation}
The minimizer  will be a harmonic function satisfying
\begin{equation*}
\left \{
\begin{array}{ll}
\cL \uu(x)=0,  &  x\in \cX\setminus\Gamma ,\\
  \uu=g,   &   \text{ on }\Gamma,\\
  \end{array}
\right.
\end{equation*}
 where $\cL$ is the unnormalized graph Laplacian given by
  \begin{equation*}
 \mathcal{L}\uu(x_i)=\sum\limits_{j=1}^{n} w_{ij}\, (\uu(x_{i}) - \uu(x_j)).
 \end{equation*}
Let $\uu=(u_1,\cdots,u_k)$ be a solution of \eqref{objective}, the label of node $x_i\in \cX\setminus\Gamma$ is dictated by
 \begin{equation*}
\underset{j\in {\{1,\cdots, k}\}}{ \textrm{arg max}} {u_{j}(x_i)}.
  \end{equation*}

In $p$-Laplacian algorithm, the object function is replaced with
\begin{align*}
\min_{\uu\in\cK}J_p(\uu):=\sum_{i=1}^{n}\sum\limits_{j=1}^{n} w_{ij} |\uu(x_{i}) - \uu(x_{j})|^{p},
\end{align*}
or for weighted Laplacian the following object is considered
\begin{align*}
\min_{\uu\in\cK}J_\gamma(\uu):=\sum_{i=1}^{n}\sum\limits_{j=1}^{n} \gamma(x_i)w_{ij} |\uu(x_{i}) - \uu(x_{j})|^{2},
\end{align*}
where $\gamma(x)\approx \text{dist}(x,\Gamma)^{-\alpha}$ increases the weights of edges adjacent to labels much more than other edges.
Using this method encourages the label functions to be flat (more regular) near labels, thus preventing the appearance of spikes (discontinuous solutions).  (see \cite{Shi-Weighted, A14} for more details).

The authors in \cite{A8}  have proposed a  scheme, called Poisson learning that  replaces the  label values at training points  as sources and sinks, and solves the Poisson equation on the graph as follows:
\[
 \mathcal{L}\uu(x_i) =\sum\limits_{j=1}^{m}  (y_{j}- \overline{y})\delta_{ij},\qquad   i=1,\cdots, n,
\]
with further condition    $\sum_{i=1}^{n} d(x_i) \uu(x_i)  =0$, where
$
\overline{y}=\frac{1}{m}\sum_{i=1}^{m} y_{i}
$
is the average label vector,    $ \delta_{ij}  $    is  Kronecker delta and $d(x_i)=\sum_{j=1}^nw_{ij}$ is the degree of vertex $x_i$.



 A  major topic in this strand  concerns the continuum limits of these PDEs or functional on  graphs, linking between  the discrete and continuum perspectives and  the study of the consistency of the above methods in the large data limit, we  refer the reader to e.g. \cite{A14,Abde1,GGHS}.


Let  $x_1, x_2,\cdots , x_n $  be a sequence of independent and identically distributed random variables on  $\Omega$ with  smooth  distribution   $ \rho(x)$.
 Define the weight matrix
\[
w_{ij}  =\eta_{\varepsilon} (| x_{i}-x_{j}|)  = \frac{1}{ \varepsilon ^d}  \eta (\frac{ | x_{i}-x_{j}|}\varepsilon)
\]
where $\eta$ is  a radial kernel $\eta:  [ 0, \infty )  \rightarrow [ 0, \infty )$   which is nonincreasing, continuous at $0$ and given by
\begin{equation*}
	\eta(t)= \left \{
	\begin{array}{ll}
	1  &  0 \le t\le 1 \\
	0     & t>2.
	\end{array}
	\right.
	\end{equation*}
In \cite{ACRWJ}  it has been shown that for $u$ sufficiently smooth, with probability one
\[
\frac{1}{\varepsilon^{2} n^2} J_{n}(\uu)\rightarrow   \sum_{i=1}^k\int_\Omega|\nabla u_i(x)|^{2} \rho(x) \, dx:=  J_{\infty}(\uu),
\]
where  $\rho$   is density function of a probability measure  that data points are generated.

In this paper, we propose a novel classification scheme based on the segregation model. Our motivation for the current work is based on
properties of a class of reaction diffusion system with highly comparative rate which yields segregation of different components  which means at each point in the domain different components can not coexist. In this model, we solve problem \eqref{objective} with additional constraint
\begin{equation*}
u_i(x)\cdot u_j(x)=0, \qquad\text{ for all }x\in \cX, \;1\leq i\neq j\leq k.
\end{equation*}

The continuous form corresponding to the segregation model has been studied   extensively, for instance  \cite{CL, F1, Av3}.
We state the results related to limiting configuration of  the following coupled system as parameter $ \varepsilon$ tends to zero.

 \begin{align}\label{f20}
\begin{cases}
\Delta  u_{i}^{\varepsilon}=   \frac{ u_{i}^{\varepsilon}}{\varepsilon}  \sum\limits_{j \neq i}   u_{j}^{\varepsilon} (x)\qquad\qquad & \text{ in  } \Omega,\\
u_{i}^{\varepsilon} \ge 0\;  & \text{ in  } \Omega,\\
u^{\varepsilon}_{i}(x) =\phi_{i}(x)    &   \text{ on} \,  \partial \Omega,\\
  \end{cases}
\end{align}
for  $i=1,\cdots, m.$   The boundary values  satisfy
\[
\phi_{i}(x) \cdot \phi_{j}(x)=0,  \quad i \neq j \textrm{ on the boundary}.
\]
First, for each fixed  $\varepsilon $ the exist   unique positive solution  $ (u_{1}^{\varepsilon},\cdots   ,u_{m}^{\varepsilon})$. Next, to explain the asymptotic behaviour of \ref{f20} by construction barrier functions, one can show that the normal derivative of $ u_{i}^{\varepsilon}$  is bounded independent of  $ \varepsilon$, this consequently proves that the  $H^1$-norm of  $ u_{i}^{\varepsilon}$
is bounded.    Next integrating  the equation  in \ref{f20}  over $\Omega$ indicates
\[
 (u_{i}^{\varepsilon}  \sum\limits_{j \neq i}   u_{j}^{\varepsilon} (x))   \rightarrow 0 \,\,  \text{as $\varepsilon$ tends to zero}.
\]
Let $ (u_1, \cdots  ,u_m)$ be the limiting configuration, then the  results in \cite{CL} shows that $u_i$ are pairwise segregated, i.e., $u_{i}(x)\cdot u_{j}(x)=0,$   harmonic in their supports and satisfy the following differential inequalities

 \begin{itemize}
\item  $ -\Delta u_{i} \ge 0$,\\
\item  $ -\Delta (u_{i}- \sum\limits_{j\neq i}u_j )\le 0$,
 \item  Let $x$  belongs to interface    then
        \[
   \underset{ y\rightarrow x} {\text{ lim}} \ \nabla u_{i}(y)=-
\underset{ y\rightarrow x}{ \text{ lim}} \  \nabla u_{j}(y)\quad \text{Free } \, \text{boundary}\, \text{condition}.
  \]
 \end{itemize}

 In \cite{Conti-Terracini}, it has been shown that the limiting solution of (\ref{f20})   minimizes the following functional

\begin{equation}\label{continuous-model}
\left\{
\begin{split}
&\min J(\uu):=\frac{1}{2}\int_\Omega\sum\limits_{i=1}^k|\nabla u_i|^2\, dx\\
&\text{subject to } u_i=\phi_i,\quad  \text{ on } \partial\Omega, \\[10pt]
&\qquad u_i\geq0, \quad\text{ and }\quad u_i\cdot u_j=0\quad\text{ in }\Omega,
\end{split}\right.
\end{equation}

 In  \cite{F1, BA, Av1, Av2, Av3}, the authors have proposed and analyzed  the following numerical scheme to solving limiting configuration of system  \eqref{f20} and  \eqref{continuous-model}
 \begin{equation*}
u_{i}^{t+1} (x) =\max\Big(\overline{u}^{t}_{i}(x)-\sum\limits_{j\neq i}\overline{u}^{t}_{j}(x),\,  0\Big)
\end{equation*}
where $\bar v(x)$   denotes the average of values of  $v$  of neighbors  of    mesh point $x,$ and $t$ refers to iterations.

\section{Calculus on graphs and setting the  problem }
This section is devoted to review some facts about the calculus on graphs and setting our problem.
Let $\cX=\{x_1,\cdots,x_n\}$ denote the vertices of a  graph with the symmetric edge weight $w_{xy}$ between $x,y\in \cX$.
The degree of a vertex $x$ is given by $d(x)=\sum_{y\in \cX} w_{xy}$.  Let $\ell^{2}(\cX)$ denote the set of functions $u: \cX \rightarrow  \mathbb{R}$ equipped with the inner product
\[
(u,v)=\sum\limits_{x\in \cX}  u(x)v(x),
\]
for functions $ u,v : \cX\rightarrow \mathbb{R}$.
We also define a vector field on the graph to be an antisymmetric function $V:\cX\times \cX\rightarrow \mathbb{R}^2$, i.e. $V(x,y)=-V(y,x)$ and denote the space of all vector fields by $\ell^2(\cX^2)$.
The gradient of a function $ u\in \ell^{2}(\cX) $ is the vector field
\[
\nabla u(x,y)=u(y)-u(x).
\]
For two vector fields $V_1, V_2$ the inner product is
\[
(V_1, V_2)_{\ell^{2}(\cX^2)}= \frac{1}{2} \sum\limits_{x,y\in \cX}w_{xy} V_{1} (x, y) V_{2} (x, y),
\]
so the norm of vector field $V$ is $\|V\|_{\ell^{2}(\cX^2)}= \sqrt{(V, V)_{\ell^{2}(\cX^2)}}$.
The graph divergence of a vector field $V$ is defined by
\[
\rdiv V(x)=\sum_{y\in X}w_{xy}V(x,y),
\]
which satisfies the divergence formula
\[
(\nabla u, V)_{\ell^{2}(\cX^2)}=-(u,\rdiv V).
\]
The unnormalized graph laplacian $\cL $ of a function $u\in \ell^2(\cX)$ is defined as
\[
\cL u(x):=-\rdiv(\nabla u)(x)=\sum_{y\in \cX}w_{xy}(u(x)-u(y)).
\]
The operator $\cL$ satisfies
\begin{equation}\label{green-formula}
(\cL u,v)=(\nabla u,\nabla v)_{\ell^2(\cX^2)}.
\end{equation}
In Appendix, we revisit some important tools for PDE on graphs, such as Poincar\'e inequality and maximum principle.

We consider a subset of the nodes $ \Gamma  \subset  \cX $  as the boundary of the graph and define the admissible set
\[
\cK:=\left\{\uu=(u_1,\cdots, u_k)\in \left(\ell^2(\cX)\right)^k: u_i=\phi_i\text{ on } \Gamma\text{ for }i=1,\dots,k\right\},
\]
where the boundary data $\phi_i$ are known and satisfy the following assumption
\begin{equation}\label{phi-assumption}
 \phi_i\in \{ 0,1\}.
\end{equation}
We are going  to solve the optimization problem
\begin{align}\label{L2-Problem}
&\min_{\uu\in\cK}J(\uu):=\|\nabla\uu\|^2=\sum_{i=1}^k\|\nabla u_i\|_{\ell^2(\cX^2)}^2\notag\\
 & \textrm{subject to: }\\
 & u_i(x)\geq0, \ \forall x\in \cX,  \\
&u_i(x)\cdot u_j(x)=0\ \forall x\in \cX\text{ and }i\neq j.\notag
\end{align}

The following theorem states the existence of solution to problem \eqref{L2-Problem} and describes some properties of the solution.

\begin{theorem} \label{property-minimizers}
Problem \eqref{L2-Problem} has a solution. Moreover, the solution satisfies
\begin{enumerate}[(i)]
\item
$\cL u_i(x)\leq 0, \text{ if } u_i(x)=0,\text { and }x\in \cX.$

\item
$\cL u_i(x)=0, \text{ if } u_i(x)>0,\text { and }x\in \cX\setminus \Gamma.$

\item
For every $x\in \cX\setminus \Gamma$, there is one component $u_i$ such that $u_i(x)>0$.
\end{enumerate}
\end{theorem}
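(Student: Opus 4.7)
The plan is to first establish existence via compactness, then observe claim (i) directly, and finally derive (ii) and (iii) from admissible one-vertex perturbations that respect the non-convex segregation constraint. For existence, the admissible set $\cK$ is nonempty: since the boundary labels are pairwise segregated by the semi-supervised encoding $y_i\in\{e_1,\dots,e_k\}$, one can fix any class $i_0$ and take $u_{i_0}\equiv 1$, $u_j\equiv 0$ for $j\ne i_0$ on $\cX\setminus\Gamma$, together with the boundary values $\phi_i$. Now $\cK$ is a finite union of affine convex polyhedra, one for each choice of active component at each interior vertex, hence closed in the finite-dimensional space $(\ell^2(\cX))^k$. The functional $J$ is continuous and coercive on $\cK$ by the graph Poincaré inequality of the Appendix applied to $u_i-\hat\phi_i$ for any extension $\hat\phi_i$ of the boundary data, so a minimizer exists. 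Claim (i) is then an identity requiring no optimality at all: if $u_i\ge 0$ and $u_i(x)=0$, then $\cL u_i(x)=-\sum_{y\in\cX} w_{xy}u_i(y)\le 0$.

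For (ii), fix an interior vertex $x$ with $u_i(x)>0$; segregation forces $u_j(x)=0$ for every $j\ne i$, so for $|\epsilon|$ small the perturbation obtained by replacing $u_i(x)$ with $u_i(x)+\epsilon$ and leaving every other value unchanged lies in $\cK$: non-negativity persists because $u_i(x)>0$, and segregation persists because the other components at $x$ remain zero. A direct differentiation of $J$ with respect to $u_i(x)$ yields $2\cL u_i(x)$, which must vanish at the minimizer.

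For (iii), suppose toward a contradiction that $u_i(x_0)=0$ for all $i$ at some interior vertex $x_0$. Then for each $i$ the one-sided perturbation $u_i(x_0)\mapsto\epsilon\ge 0$ with other values fixed is admissible, since no component is active at $x_0$ and non-negativity is automatic. First-order optimality gives $\cL u_i(x_0)\ge 0$, i.e.\ $\sum_y w_{x_0 y}u_i(y)\le 0$, which combined with $u_i\ge 0$ forces $u_i(y)=0$ for every neighbor $y$ of $x_0$ and every $i$. Iterating at each interior vertex so reached and using connectedness of the graph $\cX$, the all-zero property propagates along a path from $x_0$ to some boundary vertex, contradicting the presence of a nonzero boundary label on $\Gamma$. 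The main obstacle throughout is the non-convexity of the segregation constraint, which would normally block standard convex first-order conditions; this is bypassed by choosing the perturbations one vertex and one component at a time, in directions where segregation is preserved for free.
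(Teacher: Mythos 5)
Your proposal is correct and follows essentially the same route as the paper: existence by the direct method with the graph Poincar\'e inequality, part (i) as an immediate identity from nonnegativity, and parts (ii)--(iii) via single-vertex, single-component perturbations $u_i(x)\mapsto u_i(x)+t$ whose second-order expansion $t^2d(x)+2t\,\cL u_i(x)\ge 0$ gives the stated sign conditions. The only cosmetic difference is in (iii), where you propagate the all-zero set along a path to the boundary, while the paper equivalently picks a vertex of that set adjacent to one where some component is positive and derives the contradiction there directly.
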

\begin{proof}
Consider a minimizing sequence $\uu^n\in \cK$ for problem \eqref{L2-Problem}. By Poincar\'e inequality, Proposition \ref{prop:Poincare inequaity}, we obtain that
\[
\|u_i^n\|\leq \frac1{\lambda_1}\|\nabla(u_i^n-\phi_i)\|+\|\phi_i\|\leq \frac1{\lambda_1}(\|\nabla u_i^n\|+\|\nabla \phi_i\|)+\|\phi_i\|.
\]
Thus for every $i=1,\cdots, k$ the sequence $\{u^n_i\}$ is bounded. Hence, there exists a subsequence such that for every components $i$
\[
u^{n_j}_i\rightarrow u_i.
\]
It is obvious that $\uu=(u_1,\cdots, u_k)$ satisfies the constraints in \eqref{L2-Problem} and is a minimizer.

$(i)$ To prove this part of theorem, notice that if $u_i(x)=0$ for some $x\in \cX$, then
$$\cL u_i(x)=\sum_{y\in \cX}w_{xy}(u_i(x)-u_i(y))=-\sum_{y\in \cX}w_{xy}u_i(y)\leq 0.$$

$(ii)$ Now consider the case $u_i(x)>0$ for some fixed node $x\in \cX\setminus\Gamma$. Let us define
$$ v_i=u_i+t\delta_x, \quad v_j=u_j \text{ when }j\neq i,$$
where $\delta_x$ is delta function which is $\delta_x(y)=0$ for every $y\neq x$ and $\delta_x(x)=1$. We also consider some values of $t$ such that $v_i(x)\geq0$, ($t$ can be negative).
Obviously, $\vv\in\cK$ and satisfies the constraints in \eqref{L2-Problem}.
Therefore,
\begin{align*}
0\leq &\|\nabla\vv\|^2-\|\nabla\uu\|^2=\|\nabla v_i\|^2-\|\nabla u_i\|^2\\
=&\frac12\sum_{y,z\in \cX}w_{yz}\left((v_i(z)-v_i(y))^2-(u_i(z)-u_i(y))^2\right)\\
=&\sum_{y\in\cX}w_{xy}\left(t^2+2t(u_i(x)-u_i(y))\right)\\
=&t^2d(x)+2t\cL u_i(x).
\end{align*}
Since $u_i(x)>0$, so parameter $t$ can be take some negative values and when $t\rightarrow 0^\pm$ we conclude that $\cL u_i(x)=0$.

$(iii)$ Let $A:=\{x\in  \cX\setminus \Gamma: u_1(x)=\cdots=u_k(x)=0\}$. We claim that $A=\emptyset$.
Otherwise, there is some $x\in A$ such that $w_{xy}\neq0$ and $u_i(y)>0$ for some $i\in\{1,\cdots,k\}$.
Thus
\[
\cL u_i(x)= - \sum_{z\in\cX}w_{xz} u_i(z) \leq -w_{xy}u_i(y)<0.
\]
Now choose the competitor $\vv$ with
\[
 v_i=u_i+t\delta_x, \quad v_j=u_j \text{ when }j\neq i,
\]
for some $t\geq0$ and repeat the calculation in the previous part to get
\[
0\leq \|\nabla\vv\|^2-\|\nabla\uu\|^2 = t^2d(x)+2t\cL u_i(x).
\]
Since $\cL u_i(x)<0$, we can choose small value for $t$ to arrive at a contradiction.
\end{proof}

\begin{remark}
Problem \eqref{L2-Problem} has not necessary a unique solution.
For example, in a symmetry model, there are different choices for classification.
In a toy example, consider a graph with four vertices $A$, $B$, $C$ and $D$.
Let $w_{AB}=w_{BC}=w_{CD}=w_{AD}=1$ and $w_{AC}=w_{BD}=0$.
Also, $A$ and $C$ are boundary points with boundary data $u_1(A)=u_2(C)=1$ and $u_1(C)=u_2(A)=0$.
This problem has four solutions

\begin{enumerate}[(i)]
\item
$u_1(B)=u_1(D)=\frac12,  \quad u_2(B)=u_2(D)=0,$
\item
$u_2(B)=u_2(D)=\frac12,  \quad u_1(B)=u_1(D)=0,$
\item
$u_1(B)=u_2(D)=\frac12,  \quad u_2(B)=u_1(D)=0,$
\item
$u_1(D)=u_2(B)=\frac12,  \quad u_1(B)=u_2(D)=0.$
\end{enumerate}
\end{remark}


\section{Gradient projection method}\label{section:Gradient projection method}
Gradient projection is one method that we use to solve the problem \eqref{L2-Problem}.
In the sequel, we use the following notation for averaging of a function
\[
\cA u(x):= \frac 1{d(x)}\sum_{y\in \cX} w_{xy} u(y),
\]
where 
\[
d(x) = \sum_{y\in \cX} w_{xy},
\]
and the admissible set
\[
\cS:=\left\{\uu=(u_1,\cdots, u_k)\in \left(\ell^2(\cX)\right)^k: u_i=\phi_i\text{ on } \Gamma, u_i\geq0, u_i\cdot u_j=0 \text{ for }i\neq j\right\}.
\]
We also use the projection $\cP:\left(\ell^2(\cX)\right)^k\rightarrow \cS$ which is defined as follows.
For every $\vv\in \left(\ell^2(\cX)\right)^k$ and every $x\in \cX\setminus \Gamma$, first we find
\[
i_x:=\arg\max_{1\leq j\leq k} (v_j(x))^+
\]
and if it has more than one solution we choose the smallest index. ($v^+:=\max(v,0)$.)
Then we define $\uu:=\cP\vv$ with $u_{i_x}(x)=(v_{i_x}(x))^+$ and $u_j(x)=0$ for $j\neq i_x$.
For any $x\in \Gamma$, we obviously define $u_i(x)=\phi_i(x)$.

The following lemma shows why $\cP$ is a projection.

\begin{lemma}\label{lem:projection}
Consider $\vv\in \left(\ell^2(\cX)\right)^k$, then
\[
\|\vv-\cP\vv\| \leq \|\vv-\ww\|,\quad \text{ for all }\ww\in\cS.
\]
\end{lemma}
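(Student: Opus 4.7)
The plan is to reduce the global minimization to a pointwise minimization at each vertex $x\in\cX$, exploiting the fact that both the norm and the constraint set $\cS$ factor across vertices. Specifically, since the norm on $\left(\ell^2(\cX)\right)^k$ decomposes as
\[
\|\vv-\ww\|^2=\sum_{x\in\cX}|\vv(x)-\ww(x)|^2,
\]
where $|\cdot|$ denotes the Euclidean norm on $\bR^k$, and the conditions $w_i\geq 0$, $w_i\cdot w_j=0$ defining membership in $\cS$ are imposed independently at each vertex, it suffices to prove the inequality pointwise: for each $x$ and each admissible vector $\ww(x)$, one has $|\vv(x)-\cP\vv(x)|\leq|\vv(x)-\ww(x)|$.

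I would then treat the two types of vertices separately. If $x\in\Gamma$, both $\cP\vv(x)$ and $\ww(x)$ are forced to equal $(\phi_1(x),\ldots,\phi_k(x))$, so the pointwise contributions coincide and there is nothing to prove. If $x\in\cX\setminus\Gamma$, the vector $\ww(x)$ ranges over the set
\[
K:=\{a\in\bR^k:a_i\geq 0,\ a_ia_j=0\text{ for }i\neq j\}=\bigcup_{i=1}^k\{t\,e_i:t\geq 0\},
\]
i.e.\ the union of the $k$ nonnegative coordinate rays. For each fixed index $i$, minimizing $|\vv(x)-t\,e_i|^2$ over $t\geq 0$ gives the optimal $t=(v_i(x))^+$, with minimum value
\[
\sum_{j\neq i}v_j(x)^2+\bigl(v_i(x)-(v_i(x))^+\bigr)^2=|\vv(x)|^2-\bigl((v_i(x))^+\bigr)^2,
\]
using the identity $v^2-(v^+)^2=(v^-)^2$. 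Minimizing this expression over $i$ amounts to \emph{maximizing} $(v_i(x))^+$, which is precisely the selection rule defining $i_x$ in the construction of $\cP\vv$.

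Summing the resulting pointwise inequalities over $x\in\cX$ then yields $\|\vv-\cP\vv\|^2\leq\|\vv-\ww\|^2$ for every $\ww\in\cS$. The key structural observation making this short argument work is that $\cS$, although nonconvex, is a \emph{pointwise} constraint set whose fiber $K\subset\bR^k$ at each vertex is a simple finite union of rays, onto which Euclidean projection is trivial to compute. The only mildly delicate point is the tie-breaking rule for $i_x$ when several indices attain the maximum of $(v_i(x))^+$, but this is immaterial for the inequality: any maximizing index realizes exactly the same distance to $\vv(x)$, so $\cP\vv$ remains a closest point in $\cS$ regardless of which convention is used.
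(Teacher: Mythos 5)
Your proof is correct and follows essentially the same route as the paper's: both reduce the claim to a pointwise comparison at each vertex and verify that selecting $i_x=\arg\max_j (v_j(x))^+$ with value $(v_{i_x}(x))^+$ realizes the nearest point in the fiber of $\cS$. Your packaging --- computing the exact squared distance $|\vv(x)|^2-\bigl((v_i(x))^+\bigr)^2$ to each nonnegative coordinate ray and then maximizing $(v_i(x))^+$ over $i$ --- is a slightly cleaner, unified version of the paper's three-case sign analysis, and your remark that ties in the $\arg\max$ are immaterial is a point the paper leaves implicit.
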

\begin{proof}
Consider $\ww\in\cS$ and define the index function $\sigma:\cX\ra \{1,\cdots, k\}$ such that  $w_j(x)=0$ for $j\ne \sigma(x)$.
So,
\[\begin{split}
\|\vv-\ww\|^2=&\sum_{x\in\cX}\left((v_{\sigma(x)}(x)-w_{\sigma(x)}(x))^2+\sum_{i\ne \sigma(x)}(v_i(x))^2\right)\\
=&\sum_{x\in\cX}\sum_{i=1}^k (v_i(x))^2 + \sum_{x\in\cX}\left((v_{\sigma(x)}(x)-w_{\sigma(x)}(x))^2 - (v_{\sigma(x)}(x))^2\right)
\end{split}\]
Similarly we have,
\[
\|\vv-\cP\vv\|^2=\sum_{x\in\cX}\sum_{i=1}^k (v_i(x))^2  + \sum_{x\in\cX}\left((v_{i_x}(x)-(v_{i_x}(x))^+)^2 - (v_{i_x}(x))^2\right).
\]
It is enough to show
\begin{equation}\label{grad-proj:eq1}
(v_{i_x}(x)-(v_{i_x}(x))^+)^2 - (v_{i_x}(x))^2 \le (v_{\sigma(x)}(x)-w_{\sigma(x)}(x))^2 - (v_{\sigma(x)}(x))^2,
\end{equation}
for every $x\in \cX$.
If $v_{i_x}(x)\le 0$, then $v_{\sigma(x)}(x)\le 0$ by the definition of $i_x$.
Thus the left hand side of \eqref{grad-proj:eq1} is zero as well as the right hand side is positive (recall that $w_{\sigma(x)}(x)\ge0$).
If $v_{i_x}(x)\ge 0\ge v_{\sigma(x)}(x)$, the left hand side of \eqref{grad-proj:eq1} is negative and the right hand side will be positive.
If $v_{i_x}(x)\ge v_{\sigma(x)}(x) \ge 0$, then \eqref{grad-proj:eq1} will hold trivially.
\end{proof}

Our algorithm according to the gradient projection method is as follows:
\begin{enumerate}[(1)]
\item
Choose an initial guess in $\cS$. It might be an extension of boundary data as $u_{i,0}=\phi_i$ on $\Gamma$ and $u_{i,0}=0$ in $\cX\setminus\Gamma$.

\item
For $t\geq 0$, calculate the gradient of the cost function $J$ at $\uu^t=(u_{1,t},\cdots,u_{k,t})$. It is equal to
\[
\delta J(\uu^t):=(\cL u_{1,t},\cdots,\cL u_{k,t}).
\]

\item
Update the value of each components by
\[
\uu^{t+1}:=\cP(\uu^{t}-\frac 1d\cL\uu^t)=\cP(\cA\uu^t).
\]

\item
If $\| \uu^{t+1} - \uu^t\|$ is small then stop the algorithm, otherwise set $t=t+1$ and iterate the previous steps.
\end{enumerate}

The following proposition proves why the algorithm works.

\begin{proposition}
Assume $\uu$ is a solution of problem \eqref{L2-Problem}.
Consider an arbitrary point $x\in \cX\setminus\Gamma$ such that $u_i(x)>0$, then
\[u_i(x)=\cA u_i(x) \geq \cA u_j(x),\qquad\text{ for all }j\neq i.\]
\end{proposition}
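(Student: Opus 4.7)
My plan is to handle the two assertions separately: the equality $u_i(x) = \cA u_i(x)$ follows directly from the harmonicity result already established, while the inequality $u_i(x) \geq \cA u_j(x)$ requires testing the minimality of $\uu$ against a carefully chosen competitor that transfers mass from component $i$ to component $j$ at the single vertex $x$.

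For the equality, since $x \in \cX\setminus\Gamma$ and $u_i(x) > 0$, Theorem \ref{property-minimizers}(ii) gives $\cL u_i(x) = 0$, i.e.\ $\sum_y w_{xy}(u_i(x) - u_i(y)) = 0$. Dividing by $d(x)$ immediately yields $u_i(x) = \cA u_i(x)$.

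For the inequality, the key observation is that the segregation constraint $u_i(x) u_\ell(x) = 0$ combined with $u_i(x) > 0$ forces $u_\ell(x) = 0$ for every $\ell \neq i$. So for any $s \geq 0$ and any fixed $j \neq i$, I will define a competitor $\vv \in \cS$ that agrees with $\uu$ at every vertex other than $x$, and at $x$ sets $v_i(x) := 0$, $v_j(x) := s$, and $v_\ell(x) := 0$ for $\ell \neq i, j$. Nonnegativity is obvious, and segregation at $x$ holds since $v_i(x) = 0$ annihilates the products with $v_j(x) = s$, while at all other vertices we inherit admissibility from $\uu$.

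Then I compute $\|\nabla\vv\|^2 - \|\nabla\uu\|^2$ by isolating the edges incident to $x$ (the only ones on which differences change). A short calculation of the type performed in the proof of Theorem \ref{property-minimizers}(ii) gives
\[
\|\nabla\vv\|^2 - \|\nabla\uu\|^2 = d(x)\bigl[2u_i(x)\cA u_i(x) - u_i(x)^2 + s^2 - 2s\,\cA u_j(x)\bigr],
\]
and substituting $\cA u_i(x) = u_i(x)$ reduces the bracket to $u_i(x)^2 + s^2 - 2s\,\cA u_j(x)$. Since $\uu$ is a minimizer this quantity must be nonnegative for every $s \geq 0$; minimizing the right-hand side over $s \geq 0$ (taking $s = \cA u_j(x) \geq 0$) yields $u_i(x)^2 - \cA u_j(x)^2 \geq 0$, which gives the desired inequality after taking square roots. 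The main thing to be careful about is exhibiting a competitor that respects the segregation constraint exactly; the nonlinearity of this constraint rules out smooth bilateral perturbations, but the discontinuous swap above works precisely because $u_\ell(x) = 0$ for all $\ell \neq i$.
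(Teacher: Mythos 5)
Your proof is correct and follows essentially the same route as the paper: both arguments test minimality against the single-vertex competitor that sets $v_i(x)=0$ and places mass $s\ge 0$ into component $j$ at $x$ (legitimate precisely because segregation forces $u_\ell(x)=0$ for $\ell\neq i$), and both invoke Theorem \ref{property-minimizers}(ii) for the equality $u_i(x)=\cA u_i(x)$. The only difference is cosmetic: the paper fixes $s=u_i(x)$ and reads off $\cA u_i(x)\ge \cA u_j(x)$ directly, whereas you keep $s$ free and optimize, arriving at the same conclusion.
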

\begin{proof}
For a fixed index $j\neq i$,  define a competitor $\vv\in\cS$
\[
v_i:=u_i- u_i(x)\delta_x,\quad v_j:=u_j+ u_i(x)\delta_x, \quad v_\ell:=u_\ell,\text{ for }\ell\neq i,j.
\]
Therefore,
\[\begin{split}
0\leq & \|\nabla\vv\|^2-\|\nabla\uu\|^2 = \|\nabla v_i\|^2+\|\nabla v_j\|^2-\|\nabla u_i\|^2-\|\nabla u_j\|^2 \\
= &\sum_{y}w_{xy}\left((v_i(x)-v_i(y))^2+(v_j(x)-v_j(y))^2-(u_i(x)-u_i(y))^2-(u_j(x)-u_j(y))^2\right)\\
=&\sum_{y\neq x}w_{xy}\left(u_i(y)^2+(u_i(x)-u_j(y))^2-(u_i(x)-u_i(y))^2-u_j(y)^2\right)\\
=&\sum_{y\neq x}2w_{xy}u_i(x)\left(u_i(y)-u_j(y)\right).
\end{split}\]
Since $u_i(x)>0$, we get
\[
\frac 1{d(x)}\sum_{y}w_{xy}u_i(y)\geq  \frac 1{d(x)}\sum_{y}w_{xy}u_j(y)=\cA u_j(x).
\]
Now apply result $(ii)$ of Theorem \ref{property-minimizers}, we obtain that $u_i(x)=\frac 1{d(x)}\sum\limits_{y}w_{xy}u_i(y)$.
\end{proof}

Define the map $\cG:(\ell_+^2(\cX))^k\longrightarrow \cS$ with rule $\cG\uu=\vv$, where
\[
v_i =\max \left( u_i-\sum_{j\ne i} u_j , 0\right),
\]
and $\ell_+^2(\cX)$ is the set of nonnegative functions.
If we replace this map instead of projection $\cP$ in the gradient projection algorithm, we will obtain  the segregation method.
We will study this method in Section \ref{cluste_seg}.

\begin{proposition}
Suppose $\cP$ is the projection on $\cS$ defined in Section \ref{section:Gradient projection method}.
Then there is a positive constant  $C_0$ such that
\[
\norm{\cP \uu -\uu} \le \norm{\cG \uu-\uu}\le C_0\norm{\cP \uu -\uu},
\]
for every $\uu\in (\ell_+^2(\cX))^k$.
\end{proposition}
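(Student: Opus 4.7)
The plan is to treat the two inequalities separately. The left inequality will fall out of the projection property of $\cP$ (Lemma~\ref{lem:projection}), once we know $\cG\uu\in\cS$; the right inequality will come from a vertex-by-vertex comparison of the difference vectors $\cP\uu-\uu$ and $\cG\uu-\uu$.

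\emph{Left inequality.} First I would verify that $\cG\uu\in\cS$. Nonnegativity is built in via the truncation. Pairwise segregation holds because at any vertex $x$ at most one index $i$ can satisfy $u_i(x)>\sum_{j\ne i}u_j(x)$, forcing $(\cG\uu)_i(x)(\cG\uu)_j(x)=0$ for $i\ne j$. On $\Gamma$ the map $\cG$ is taken to restore the boundary data (consistently with the stated codomain $\cS$), so $\cP\uu$ and $\cG\uu$ both agree with $\phi$ there. With $\cG\uu\in\cS$ in hand, Lemma~\ref{lem:projection} applied with $\ww=\cG\uu$ gives $\|\cP\uu-\uu\|\le\|\cG\uu-\uu\|$ at once.

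\emph{Right inequality: pointwise analysis.} For each $x\in\cX\setminus\Gamma$, let $i=i_x$ be the dominant index; write $a=u_i(x)$ and $b_j=u_j(x)$ for $j\ne i$, and set $s=\sum_{j\ne i}b_j$. By definition of $i_x$, $0\le b_j\le a$. By construction $(\cP\uu)(x)$ has $i$-coordinate $a$ and all others zero, so
\[
|\cP\uu(x)-\uu(x)|^2=\sum_{j\ne i}b_j^2.
\]
For $\cG\uu$ one gets $(\cG\uu)_i(x)=(a-s)^+$, and for $j\ne i$ the estimate $2b_j-a-s\le b_j-s\le 0$ (using $b_j\le a$ and $s\ge b_j$) yields $(\cG\uu)_j(x)=0$. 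Hence
\[
|\cG\uu(x)-\uu(x)|^2=\min(a,s)^2+\sum_{j\ne i}b_j^2.
\]

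\emph{Conclusion.} By Cauchy--Schwarz, $\min(a,s)^2\le s^2=\bigl(\sum_{j\ne i}b_j\bigr)^2\le (k-1)\sum_{j\ne i}b_j^2$, so pointwise on $\cX\setminus\Gamma$,
\[
|\cG\uu(x)-\uu(x)|^2\le k\,|\cP\uu(x)-\uu(x)|^2.
\]
On $\Gamma$ both vectors coincide with $\phi-\uu|_\Gamma$, contributing equally. Summing over all vertices gives $\|\cG\uu-\uu\|^2\le k\,\|\cP\uu-\uu\|^2$, i.e.\ the right inequality with $C_0=\sqrt{k}$. The only real subtlety is the short algebraic verification that $(\cG\uu)_j(x)$ vanishes for every $j\ne i_x$; once that is in place everything reduces to summing the pointwise bound and a single Cauchy--Schwarz.
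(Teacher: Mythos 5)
Your proof is correct and follows essentially the same route as the paper: the left inequality via Lemma~\ref{lem:projection} after checking $\cG\uu\in\cS$, and the right inequality by a vertex-by-vertex comparison closed with Cauchy--Schwarz. If anything you are slightly more careful than the paper, since your $\min(a,s)^2$ term correctly accounts for the truncation in $(\cG\uu)_{i}(x)$ and your estimate $2b_j-a-s\le 0$ absorbs the tied-maximum case that the paper treats separately; your constant $\sqrt{k}$ for the unsquared norms is also the consistent one.
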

\begin{proof}
The left inequality will be hold according to Lemma \ref{lem:projection}.
For a fixed node $x\in\cX$, we need to show
\begin{equation}\label{prop:segr-proj-eq1}
\sum_{j=1}^k|v_j(x)-u_j(x)|^2 \le C_0 \sum_{j=1}^k |w_j(x)-u_j(x)|^2,
\end{equation}
where $\cG\uu=\vv$ and $\cP\uu=\ww$.
Let $i:=\arg\max_{1\leq j\leq k} u_j(x)$.
If there is more than one index for $i$, then $\vv(x)=\cG(\uu)(x)=0$.
Thus \eqref{prop:segr-proj-eq1} holds for $C_0\ge 2$.

Now assume that $i$ is the unique solution of $i:=\arg\max_{1\leq j\leq k} u_j(x)$. Hence,
\[
w_i(x)=u_i(x),\text{ and } w_j(x)=v_j(x)=0\text{ for }j\ne i.
\]
Therefore, using the definition of $v_i$ and applying Cauchy-Schwartz inequality we obtain
\[\begin{split}
\sum_{j=1}^k|v_j(x)-u_j(x)|^2 &= \left(\sum_{j\ne i} u_j(x)\right)^2 + \sum_{j\ne i}|u_j(x)|^2 \\
&\le ((k-1)+1)\sum_{j\ne i}|u_j(x)|^2 =k\sum_{j=1}^k|w_j(x)-u_j(x)|^2,
\end{split}\]
which implies that \eqref{prop:segr-proj-eq1} holds for $C_0\ge k$.
\end{proof}

\section{Penalization method}

In this section, we apply the penalization method to solve problem \eqref{L2-Problem}.
Since finding the solution directly is not efficient (the optimization problem \eqref{L2-Problem} is a problem with $(n-m)^k$ parameters), we would prefer to solve a PDE instead.
In this case, we can just find a PDE for points that  $u_i>0$ and this subdomain is not known.
In fact, we have a free boundary problem and if we know the domain $\{u_i>0\}$, we are able to find the solution.
In order to overcome this difficulty, we relax the constraint with a penalty term and try to estimate the solution for the original problem \eqref{L2-Problem}.

Indeed, we consider the following problem
\begin{equation}\label{L2-Problem*}
\min_{\uu\in\cK}J_\veps(\uu):=\sum_{i=1}^k\|\nabla u_i\|_{\ell^2(X^2)}^2+\frac1\veps\sum_{i\neq j}(u_i^2,u_j^2).
\end{equation}
Since the energy function is convex, it is straightforward that the problem has a unique solution which satisfies
\begin{equation}\label{PDE-penalized}
\cL u_i+\frac{u_i}\veps\sum_{j\neq i}u_j^2=0,\quad\text{ in }X\setminus\Gamma.
\end{equation}
Furthermore, we know that the solution is nonnegative due to the maximum principle, Proposition \ref{Maximum principle}.

\begin{theorem}
Let $\uu^\veps$ be the solution of \eqref{L2-Problem*} for every $\veps>0$.
For any sequence $\veps_n\rightarrow0$, there is a  subsequence of $\uu^{\veps_n}$ which converges to  a minimizer of \eqref{L2-Problem}.
\end{theorem}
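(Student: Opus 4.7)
The plan is to exploit the penalty structure together with the finite-dimensionality of $(\ell^2(\cX))^k$. The strategy proceeds in four steps: (1) use a minimizer of \eqref{L2-Problem} as a test competitor to derive uniform bounds on $\uu^\veps$, (2) extract a convergent subsequence, (3) verify that the limit lies in the constrained admissible set of \eqref{L2-Problem}, and (4) identify it as a minimizer via a lower-semicontinuity comparison.

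First, I would fix any minimizer $\uu^*$ of \eqref{L2-Problem} (which exists by Theorem \ref{property-minimizers}). Since $u_i^*\, u_j^* \equiv 0$ for $i \neq j$, the penalty vanishes on $\uu^*$, giving $J_\veps(\uu^*) = J(\uu^*)$. Because $\uu^* \in \cK$ is admissible for \eqref{L2-Problem*} and $\uu^\veps$ minimizes $J_\veps$ over $\cK$, we obtain the key estimate
\[
\sum_{i=1}^k \|\nabla u_i^\veps\|_{\ell^2(\cX^2)}^2 \;+\; \frac{1}{\veps}\sum_{i\neq j}\bigl((u_i^\veps)^2,(u_j^\veps)^2\bigr) \;\le\; J(\uu^*).
\]
This simultaneously yields a $\veps$-uniform bound on the Dirichlet energy and forces the penalty term to be $O(\veps)$. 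Applying Proposition \ref{prop:Poincare inequaity} to $u_i^\veps - \phi_i$, which vanishes on $\Gamma$, then gives a uniform $\ell^2(\cX)$ bound on each $u_i^\veps$.

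Second, since $\cX$ is finite, $(\ell^2(\cX))^k$ is finite-dimensional; any bounded sequence is precompact, so there is a subsequence (still denoted $\uu^{\veps_n}$) converging pointwise on $\cX$ to some $\uu^0$. I would verify that $\uu^0$ satisfies all constraints of \eqref{L2-Problem}: nonnegativity of $u_i^{\veps_n}$ (from the maximum principle, Proposition \ref{Maximum principle}) and the boundary values $u_i^{\veps_n}\!\mid_\Gamma = \phi_i$ pass to the pointwise limit. For segregation, the penalty estimate implies, for each $i\neq j$ and each $x\in\cX$,
\[
\bigl(u_i^{\veps_n}(x)\bigr)^2 \bigl(u_j^{\veps_n}(x)\bigr)^2 \;\le\; \veps_n\, J(\uu^*) \;\longrightarrow\; 0,
\]
so $u_i^0(x)\, u_j^0(x) = 0$. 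Hence $\uu^0$ is admissible for \eqref{L2-Problem}.

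Third, by pointwise convergence on the finite edge set,
\[
J(\uu^0) \;=\; \lim_{n\to\infty}\sum_{i=1}^k\|\nabla u_i^{\veps_n}\|_{\ell^2(\cX^2)}^2 \;\le\; \liminf_{n\to\infty} J_{\veps_n}(\uu^{\veps_n}) \;\le\; J(\uu^*),
\]
which, combined with admissibility of $\uu^0$ and minimality of $\uu^*$, forces $J(\uu^0) = J(\uu^*)$; thus $\uu^0$ is itself a minimizer of \eqref{L2-Problem}. The argument is essentially a penalty $\Gamma$-limit, but the finite graph setting collapses weak-compactness subtleties to elementary sequential pointwise convergence, so there is no serious obstacle beyond checking that the penalty bound transfers to pointwise segregation of the limit — which follows trivially from the nonnegativity of each summand in $\sum_{i\neq j}((u_i^\veps)^2,(u_j^\veps)^2)$.
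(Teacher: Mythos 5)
Your proposal is correct and follows essentially the same route as the paper's own proof: using a minimizer of \eqref{L2-Problem} as a competitor to get the uniform bound $J_{\veps}(\uu^{\veps})\le J(\uu^*)$, extracting a convergent subsequence via Poincar\'e and finite-dimensionality, passing the penalty bound to the limit to recover the segregation constraint (with nonnegativity from the maximum principle), and concluding by continuity of the Dirichlet energy. No gaps.
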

\begin{proof}
Let $\vv$ be an arbitrary minimizer of \eqref{L2-Problem}, then we have $J_\veps(\vv)=J(\vv)$ thanks to the constraint in \eqref{L2-Problem}. So,
$$J_\veps(\uu^\veps)\leq J_\veps(\vv)=J(\vv)=:\Lambda.$$
Therefore $\|\nabla \uu^\veps\|\leq \sqrt\Lambda$ is uniformly bounded and then by Poincar\'e inequality we get $\|\uu^\veps\|$ is bounded, since
$$\lambda_1\|u_i^\veps-\phi_i\|\leq \|\nabla(u_i^\veps-\phi_i)\|\leq\sqrt\Lambda+\|\nabla\phi_i\|.$$
Hence, toward a subsequence we can assume that $\uu^{\veps_n}\rightarrow\uu\in\cK$.
We need to show that $\uu$ is a minimizer of  \eqref{L2-Problem} and  satisfies its constraints.
First, we have
$$
\frac1{\veps_n}\sum_{i\neq j}\left((u_i^{\veps_n})^2,(u_j^{\veps_n})^2\right)\leq J_{\veps_n}(\uu^{\veps_n})\leq \Lambda,
$$
so,
$$
\sum_{i\neq j}\left((u_i^{\veps_n})^2,(u_j^{\veps_n})^2\right)\longrightarrow0.
$$
Thus, $(u_i^2,u_j^2)=0$ and taking into account that $u^\veps_i$ is nonnegative we obtain that the constraint in \eqref{L2-Problem} holds for $\uu$.
To close the argument, note that
$$\|\nabla\uu\|^2=\lim_{\veps_n\rightarrow0}\|\nabla\uu^{\veps_n}\|^2\leq J_{\veps_n}(\uu^{\veps_n})\leq \Lambda.$$
So, $J(\uu)=\Lambda$ and $\uu$ is a minimizer.
\end{proof}

In the sequel we introduce an algorithm to solve problem \eqref{L2-Problem*} or its equivalent version  \eqref{PDE-penalized}.
The later is a nonlinear system of PDEs and is not easy to solve directly.
For an explanation of our algorithm, we define the following sequence which converges to the solution \eqref{PDE-penalized}.
First, consider the harmonic extension $u_{i,0}$ of boundary data given by
$$
\left\{\begin{array}{ll}
\cL u_{i,0}=0, & \text{ in }X\setminus\Gamma,\\[8pt]
u_{i,0}=\phi_i& \text{ on }\Gamma,
\end{array}\right.
$$
which is a nonnegative function according to the maximum principle.
Next, given nonnegative functions $\uu_m:=(u_{1,m},\cdots,u_{k,m})$, let $u_{i,m+1}$ be the solution of the following system
$$
\left\{\begin{array}{ll}
\cL u_{i,m+1}+\frac{u_{i,m+1}}{\veps}\sum\limits_{j\neq i}u_{j,m}^2=0, & \text{ in }X\setminus\Gamma,\\[10pt]
u_{i,m+1}=\phi_i,& \text{ on }\Gamma,
\end{array}\right.
$$
The following theorem shows that why our algorithm  works for solving problem \eqref{PDE-penalized}.

\begin{theorem}\label{thm-order-seq}
Suppose that the boundary data $\phi_i$ satisfy \eqref{phi-assumption}.
Then the  sequence $\uu_m$ makes the following order
\begin{equation}\label{order-sequence-u}
1\geq u_{i,0}\geq u_{i,2}\geq\cdots\geq u_{i,2m}\geq\cdots\geq u_{i,2m+1}\geq\cdots\geq u_{i,3}\geq u_{i,1}\geq0.
\end{equation}
Moreover, the limit of this sequence is the solution of \eqref{PDE-penalized}.
\end{theorem}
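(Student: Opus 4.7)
The plan is to establish the chain \eqref{order-sequence-u} by induction on $m$, relying on the weak maximum principle for the operator $\cL + c/\veps$ with nonnegative potential $c$ (from the Appendix), and then to take the resulting monotone limit and pass to the limit in the iteration equation.

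First I would verify the two-sided bound $0\le u_{i,m}\le 1$ for every $m$ and $i$. For $m=0$ this is the standard maximum principle for the $\cL$-harmonic extension of $\phi_i\in\{0,1\}$. For $m\ge 1$ the equation has the form $\cL u_{i,m}+\frac{c_{i,m-1}}{\veps}u_{i,m}=0$ on $\cX\setminus\Gamma$ with nonnegative potential $c_{i,m-1}:=\sum_{j\ne i}u_{j,m-1}^2$, and the same maximum principle, together with comparison against the constant function $1$, yields both bounds. The core monotone comparison is the \emph{antitone property} of the iteration: if $u_{j,m}\le u_{j,m'}$ for every $j\ne i$, then $u_{i,m+1}\ge u_{i,m'+1}$. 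To prove it, set $W:=u_{i,m+1}-u_{i,m'+1}$ and subtract the two defining equations to obtain
\[
\cL W+\frac{c_{i,m}}{\veps}\,W \;=\; \frac{u_{i,m'+1}\bigl(c_{i,m'}-c_{i,m}\bigr)}{\veps}\;\ge\; 0,\qquad W=0\text{ on }\Gamma,
\]
so $W\ge 0$ by the maximum principle. A direct subtraction of the equations for $u_{i,0}$ (harmonic) and $u_{i,1}$ supplies the base case $u_{i,0}\ge u_{i,1}$.

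With these ingredients, the full ordering follows by induction. Letting $H(m)$ denote the conjunction $u_{j,2m}\ge u_{j,2m+2}$, $u_{j,2m+1}\le u_{j,2m+3}$, and $u_{j,2m}\ge u_{j,2m+1}$ for every $j$, the step $H(m)\Rightarrow H(m+1)$ is obtained by feeding each piece of $H(m)$ through the antitone property in turn, i.e., successively using $\uu_a\ge \uu_b \Rightarrow T\uu_a \le T\uu_b$ where $T$ is the iteration map. Once the ordering is in place, the even subsequence $\{u_{i,2m}\}$ is nonincreasing and nonnegative while the odd subsequence $\{u_{i,2m+1}\}$ is nondecreasing and bounded above by $u_{i,0}\le 1$; on the finite set $\cX$ each converges to limits $u_i^+\ge u_i^-\ge 0$. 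Passing to the limit in the iteration produces the cross-coupled system
\[
\cL u_i^{\pm}+\frac{u_i^{\pm}}{\veps}\sum_{j\ne i}(u_j^{\mp})^2=0 \ \text{in }\cX\setminus\Gamma, \qquad u_i^{\pm}=\phi_i \ \text{on }\Gamma.
\]

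The main obstacle is the final step: showing $\uu^+=\uu^-$, for once this is established, uniqueness of the solution of \eqref{PDE-penalized} (stemming from the convexity remark after its derivation) identifies the common limit with $\uu^*$. The plan here is to test the subtracted equations for $w_i:=u_i^+-u_i^-\ge 0$ (which vanish on $\Gamma$) against $w_i$ itself, producing an identity of the form $\sum_i\|\nabla w_i\|^2 + \frac{1}{\veps}\sum_i(w_i^2,c_i^-) = \frac{1}{\veps}\sum_i(u_i^-w_i,\,c_i^+-c_i^-)$, and then to combine this with the a priori sandwich $\uu^-\le \uu^*\le \uu^+$ (which itself follows from the antitone property applied to the fixed point $\uu^*$) to rule out a nontrivial two-cycle. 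Everything preceding this step is essentially bookkeeping around the graph maximum principle; the extraction of equality of the two subsequential limits is where the genuine work lies.
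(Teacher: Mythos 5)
Your construction of the interlaced ordering coincides with the paper's proof: nonnegativity and the bound $u_{i,m}\le u_{i,0}\le 1$ via the maximum principle with nonnegative potential (Steps 1--3 of the paper), the base comparison $u_{i,0}\ge u_{i,1}$ from $\cL u_{i,1}\le 0=\cL u_{i,0}$, and the antitone comparison property driving the induction exactly as in the paper's Steps 4--5. The monotone even/odd subsequences and the cross-coupled limit system for $\uu^{\pm}$ are the paper's Step 6. Up to that point the proposal is sound.

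The gap is exactly at the point you flag as ``where the genuine work lies,'' and it is not closed. With $w_i:=u_i^+-u_i^-\ge0$ and $c_i^{\pm}:=\sum_{j\ne i}(u_j^{\pm})^2$, your identity is correct (it is the paper's identity rearranged), but its right-hand side expands as
\begin{equation*}
\sum_i\big(u_i^- w_i,\,c_i^+-c_i^-\big)=\sum_{x\in\cX}\sum_{i\ne j}u_i^-(x)\,w_i(x)\,\big(u_j^+(x)+u_j^-(x)\big)\,w_j(x),
\end{equation*}
a sum of manifestly nonnegative terms, so the identity alone forces nothing. The sandwich $\uu^-\le\uu^*\le\uu^+$ (which does follow from the antitone property) is equally consistent with a nontrivial two-cycle straddling the fixed point, and you give no mechanism by which it would interact with the identity; ``rule out a nontrivial two-cycle'' is the desired conclusion, not an argument. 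The paper's proof does the decisive work here: it sums the identity over $i$, symmetrizes in $(i,j)$, and shows the resulting right-hand side is nonpositive using $0\le \underline u_i\le\overline u_i\le1$ together with Cauchy--Schwarz, whence $\nabla(\overline\uu-\underline\uu)=0$ and $\overline\uu=\underline\uu$ by the boundary condition. Some version of that sign computation (or another genuine device) is what your write-up lacks. Two smaller points: once $\uu^+=\uu^-$ is known, the common limit solves \eqref{PDE-penalized} simply by passing to the limit in the iteration, so the appeal to uniqueness of \eqref{L2-Problem*} is unnecessary; and the convexity of the penalty term $\sum_{i\ne j}(u_i^2,u_j^2)$ that you invoke for that uniqueness is itself not evident.
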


\begin{proof}
{\bf Step 1: } We show that $u_{i,m}$ is nonnegative. \\
This is a matter of maximum principle, Proposition  \ref{Maximum principle}.
For $m=0$, it is obvious due to maximum principle and taking account that $u_{i,0}$ is a harmonic function. In fact, we consider $p(x)\equiv0$ in Proposition \ref{Maximum principle}.
To show $u_{i,m+1}\geq0$, again apply Proposition  \ref{Maximum principle} for nonnegative function $p(x)=\frac1{\veps}\sum_{j\neq i}u_{j,m}^2$.

\medskip
{\bf Step 2: } $u_{i,0}\leq 1$. \\
Apply the maximum principle for harmonic function $1 - u_{i,0}$ and recall the assumption \eqref{phi-assumption}.

\medskip
{\bf Step 3: } In this step we show that $u_{i,m}\leq u_{i,0}$.\\
We just need to note that $\cL u_{i,m+1}\leq 0=\cL u_{i,0}$. Then maximum principle yields that $u_{i,m+1}\leq u_{i,0}$.

\medskip
{\bf Step 4:} Here, we claim that $u_{i,2}\geq u_{i,1}$.\\
By the result of Step 2, we can write
\begin{align*}
0&=\cL u_{i,2}+\frac{u_{i,2}}{\veps}\sum_{j\neq i}u_{j,1}^2\\
&\leq \cL u_{i,2}+\frac{u_{i,2}}{\veps}\sum_{j\neq i}u_{j,0}^2.
\end{align*}
This together with the equation of $u_{i,1}$,  the maximum principle yields that $u_{i,2}\geq u_{i,1}$.

\medskip
{\bf Step 5:} Now we close the argument with the induction. Assume that
\begin{equation}\label{order-u-induction}
u_{i,0}\geq u_{i,2}\geq\cdots\geq u_{i,2m}\geq u_{i,2m-1}\geq\cdots\geq u_{i,1}\geq0.
\end{equation}
for some $m\geq1$, we will extend the string for $m+1$.
By the following inequality
$$
0=\cL u_{i,2m+1}+\frac{u_{i,2m+1}}{\veps}\sum_{j\neq i}u_{j,2m}^2\geq \cL u_{i,2m+1}+\frac{u_{i,2m+1}}{\veps}\sum_{j\neq i}u_{j,2m-1}^2
$$
we can apply Proposition  \ref{Maximum principle} for function $(u_{i,2m}-u_{i,2m+1})$ when $p=\frac{1}{\veps}\sum_{j\neq i}u_{j,2m-1}^2$
to deduce that $u_{i,2m}\geq u_{i,2m+1}$.
Similarly, we have
$$
0=\cL u_{i,2m+1}+\frac{u_{i,2m+1}}{\veps}\sum_{j \neq i}u_{j,2m}^2\leq \cL u_{i,2m+1}+\frac{u_{i,2m+1}}{\veps}\sum_{j \neq i}u_{j,2m-2}^2
$$
according the induction assumption $u_{j,2m-2}\geq u_{j,2m}$.
Comparing with the equation for $u_{i,2m-1}$ we obtain $u_{i,2m+1}\geq u_{i, 2m-1}$.
Now repeat this argument to show that $u_{i,2m}\geq u_{i,2m+2}\geq u_{i,2m+1}$.

%

\medskip
{\bf Step 6:}
From \eqref{order-sequence-u}, we know that  there are $\overline u_i$ and $\underline u_i$ as the limit of
\begin{align*}
u_{i,2m}\longrightarrow \overline u_i,\\
u_{i,2m+1}\longrightarrow \underline u_i.
\end{align*}
These limits satisfy
\begin{align}\label{limit-equation}
\left\{\begin{array}{ll}
\cL\overline u_i+\frac{\overline u_i}{\veps}\sum_{j\neq i}\underline u_j^2=0, \quad\text{ in }\cX\setminus\Gamma\\[10pt]
\cL\underline u_i+\frac{\underline u_i}{\veps}\sum_{j\neq i}\overline u_j^2=0, \quad\text{ in }\cX\setminus\Gamma\\[10pt]
\overline u_i=\underline u_i=\phi_i\quad\text{ on }\Gamma.
\end{array}\right.
\end{align}
Multiply in inner product $\ell^2(\cX)$ both equations by $\overline u_i - \underline u_i$ and subtract to get
\[
\veps\|\nabla (\overline u_i - \underline u_i)\|_{\ell^2(\cX^2)}^2  = \Big( \underline u_i (\overline u_i - \underline u_i), \sum_{j\neq i}\overline u_j^2\Big)-\Big( \overline u_i (\overline u_i - \underline u_i), \sum_{j\neq i}\underline u_j^2\Big).
\]
It is worthwhile noticing that although the equation holds in $\cX\setminus\Gamma$, since $\overline u_i - \underline u_i=0$ on $\Gamma$ we are able to utilize the relation \eqref{green-formula}.
Now sum over $i$, we obtain
\begin{align*}
\veps\|\nabla (\overline \uu - \underline \uu)\|_{\ell^2(\cX^2)}^2
= & \sum_i \Big( \underline u_i \overline u_i , \sum_{j\neq i}(\overline u_j^2+\underline u_j^2)\Big)
-\sum_i \Big(  \underline u_i^2, \sum_{j\neq i}\overline u_j^2 \Big)
-\sum_i \Big(  \overline u_i^2, \sum_{j\neq i}\underline u_j^2 \Big) \\
= & \sum_{i,j} \big( \underline u_i \overline u_i , \overline u_j^2+\underline u_j^2\big) - \sum_i \big( \underline u_i \overline u_i , \overline u_i^2+\underline u_i^2\big) \\
& - 2 \sum_{i,j} \big(  \underline u_i^2, \overline u_j^2 \big) +  2 \sum_i  \big(  \underline u_i^2, \overline u_i^2 \big)\\
= & \sum_{x\in \cX} \sum_{i,j}  \underline u_i(x) \overline u_i(x) (\overline u_j^2(x)+\underline u_j^2(x)) \\
& - \sum_{x\in\cX} \sum_i  \underline u_i(x) \overline u_i(x) (\overline u_i(x)-\underline u_i(x))^2 \\
& -2 \sum_{x\in \cX} \Big(\sum_i (\underline u_i(x))^2 \Big)\Big( \sum_i (\overline u_i(x))^2 \Big) \\
\leq & 2 \sum_{x\in \cX}  \left(\sum_{i}  \underline u_i(x) \overline u_i(x)  -  \Big(\sum_i (\underline u_i(x))^2 \Big)\Big( \sum_i (\overline u_i(x))^2 \Big) \right) \leq 0,
\end{align*}
where we have used the relation $0\leq \underline u_i \leq  \overline u_i \leq 1$ and in the last line the Cauchy-Schwartz inequality has been applied.

Then $\|\nabla (\overline \uu - \underline \uu)\|_{\ell^2(\cX^2)}^2\leq 0$ and so, $\overline \uu - \underline \uu$ is  constant in $\cX$. Taking  this along to the boundary condition  implies that $\overline \uu = \underline \uu$ in $\cX$.
Recall \eqref{limit-equation}, $\underline u_i =\overline u_i$ is a solution of \eqref{PDE-penalized}.
\end{proof}

 \section{The main algorithm for clustering}\label{cluste_seg}
 In the previous sections of the paper we considered a minimization problem \eqref{L2-Problem}, which unfortunately has no unique solution over connected graphs. In the current section in order to overcome the lack of uniqueness we consider different functional and prove the existence and uniqueness of the minimizer. The definition of a new functional is inspired from the numerical results of the spatial segregation of reaction-diffusion systems (see \cite{Av1}).
\subsection{Existence and  uniqueness of a minimizer}

We introduce the discrete counterpart of the spatial segregation problem defined on connected graphs.
In the rest of the paper the following notation
\[
\hat{z}_q=z_q-\sum_{j\neq q}z_j,
\]
for elements  $(z_1,z_2,\dots,z_k)$, will play a crucial role.
Let    $\overline{u}_{i}(x_{l} )$ for $ i=1,2\cdots k$  denote  the average value  of $u_{i}$ for all neighbor points of $x_{l}:$
\[
\overline{u}_{i}(x_{l})= \frac{1}{\deg(x_{l})} \sum_{p  \sim l} w_{lp} u_{i} (x_p),
\]
where
\[
\deg(x_{l})=  \sum_{(x_l,y)\in E} w_{x_ly},
\]
and $V$ and $E$ stand for a set of vertices and edges respectively. We will set a graph $\cX$ to be a tuple $(V, E)$ in the rest of this section.

%

When $\cX$ is a connected graph and also consist of discrete and finite number of points, it turns out that we have to consider slightly different functional (see \cite[Section $2$]{Av1}). Since $\mathcal L$ is a self-adjoint operator, then we set:
 \begin{equation}
\label{sun2}
J(u_1,\dots,u_k)=\frac{1}{2}  \sum_{i=1}^{k} \|\nabla u_i\|^2_{\ell^{2}(X^2)}- \sum_{i\neq j} \left(\nabla u_i, \nabla u_j\right)_{\ell^{2}(\cX^2)}, \end{equation}
over the set
\begin{equation}\label{disc_min_set}
	\mathcal K=\left\{\uu=(u_1,\cdots, u_k)\in \left(\ell^2(\cX)\right)^k: u_i=\phi_i\text{ on } \Gamma, u_i\geq0, u_i\cdot u_j=0 \text{ for }i\neq j\right\}.
\end{equation}


\begin{theorem}
	The following minimization problem
	\begin{equation}\label{disc_min_problem}
	\inf_{\mathcal K}J(u_1,u_2,\dots,u_k)\
	\end{equation}
	has a solution.
\end{theorem}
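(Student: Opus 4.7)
My plan is to apply the direct method of the calculus of variations in the finite-dimensional space $(\ell^2(\cX))^k$. The main subtlety—and the step I expect to be the genuine obstacle—is that $J$ contains the \emph{negative} cross term $-\sum_{i\neq j}(\nabla u_i,\nabla u_j)_{\ell^2(\cX^2)}$, so a priori $J$ is not even bounded below on $(\ell^2(\cX))^k$. The key observation will be that on $\cK$ the segregation constraint forces this cross term to be nonnegative, after which the rest follows from standard compactness in the finite-dimensional setting.

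\textbf{Step 1: rewriting $J$ on $\cK$.} For $i\neq j$ and $\uu\in\cK$, I would expand
\[
(\nabla u_i,\nabla u_j)_{\ell^2(\cX^2)}=\frac12\sum_{x,y\in\cX} w_{xy}\bigl(u_i(y)-u_i(x)\bigr)\bigl(u_j(y)-u_j(x)\bigr).
\]
The constraint $u_i(x)u_j(x)=0$ kills the diagonal contributions $u_i(x)u_j(x)$ and $u_i(y)u_j(y)$, and the symmetry $w_{xy}=w_{yx}$ combines the two remaining mixed terms, yielding
\[
(\nabla u_i,\nabla u_j)_{\ell^2(\cX^2)}=-\sum_{x,y\in\cX} w_{xy}\,u_i(x)\,u_j(y)\le 0.
\]
Consequently, for every $\uu\in\cK$,
\[
J(\uu)\ge \frac12\sum_{i=1}^k\|\nabla u_i\|_{\ell^2(\cX^2)}^2\ge 0,
\]
so $J$ is bounded below by $0$ on $\cK$.

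\textbf{Step 2: compactness of a minimizing sequence.} First, $\cK$ is nonempty: using the compatibility $\phi_i\phi_j=0$ on $\Gamma$ (inherent to the labelling setup), the tuple with $u_i=\phi_i$ on $\Gamma$ and $u_i\equiv 0$ on $\cX\setminus\Gamma$ lies in $\cK$. Pick a minimizing sequence $\uu^n\in\cK$. From Step 1, $\|\nabla u_i^n\|_{\ell^2(\cX^2)}\le \sqrt{2\,J(\uu^n)}$ is bounded. The Poincar\'e inequality (Appendix) applied to $u_i^n-\phi_i$ then gives
\[
\|u_i^n\|\le \lambda_1^{-1}\bigl(\|\nabla u_i^n\|+\|\nabla\phi_i\|\bigr)+\|\phi_i\|,
\]
so $\{\uu^n\}$ lives in a bounded subset of the finite-dimensional space $(\ell^2(\cX))^k$. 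A subsequence (not relabeled) converges pointwise on $\cX$ to some $\uu$.

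\textbf{Step 3: closure of $\cK$ and conclusion.} Pointwise convergence preserves each constraint defining $\cK$—the boundary values $u_i=\phi_i$ on $\Gamma$, the nonnegativity $u_i\ge 0$, and the segregation $u_i(x)u_j(x)=0$—so $\uu\in\cK$. Since $J$ is a continuous polynomial in the finitely many nodal values of $\uu$, $J(\uu)=\lim_n J(\uu^n)=\inf_\cK J$, and $\uu$ realizes the infimum. The whole argument rests on Step 1; once $J$ is known to be bounded below on $\cK$, everything else is the standard finite-dimensional direct-method machinery.
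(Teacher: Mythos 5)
Your proof is correct. Note, however, that the paper does not actually write out an argument for this theorem: it simply states that the proof ``repeats the same lines as in Theorem 2 of [Av1]'' (where the functional is set up for a finite-difference discretization) and asserts that the adaptation to connected graphs is routine. Your proposal supplies exactly the self-contained argument that this deferral leaves implicit, and it is organized around the right pivot: since $J$ contains the term $-\sum_{i\neq j}(\nabla u_i,\nabla u_j)_{\ell^2(\cX^2)}$, coercivity is not free, and your Step 1 computation --- that the segregation constraint $u_i u_j\equiv 0$ reduces the cross term to $-\sum_{x,y}w_{xy}u_i(x)u_j(y)\le 0$, so that $J\ge\tfrac12\sum_i\|\nabla u_i\|^2$ on $\cK$ --- is the one genuinely non-routine ingredient. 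After that, your Steps 2--3 are the same direct-method machinery (Poincar\'e inequality from the Appendix, boundedness, pointwise convergence of a subsequence, closedness of $\cK$ under pointwise limits, continuity of $J$) that the paper itself uses in the proof of Theorem \ref{property-minimizers} for the functional without the cross term. One minor point worth making explicit, which you correctly flag in passing: nonemptiness of $\cK$ requires the compatibility condition $\phi_i\phi_j=0$ on $\Gamma$, which is implied by the labelling setup but is not literally contained in the stated assumption \eqref{phi-assumption}.
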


\begin{proof}
The proof repeats the same lines as in Theorem $2$ in \cite{Av1}. In \cite{Av1} the functional is defined for standard difference scheme, but it can be easily concluded for the connected graphs as well.
\end{proof}

Now, by following the proofs of Proposition $1$ and Lemma $2$  for $F_l(x,s)=0$ in \cite{Av1}, We can observe that the similar results can be obtained for connected graphs instead of finite difference discretization. Although, it is worth to notice that the standard finite difference grid is itself a particular case of connected graphs.

Thus, we conclude the following  result:

\begin{theorem}\label{limit_pde_discrete}
	For every minimizer $(u_1,\dots,u_k)\in \mathcal K,$ the following properties hold:
\begin{itemize}
	\item $\mathcal{L}\hat{u}_i(x) =0 \;\; whenever \;\; u_i(x)>0.$
	\item $\mathcal{L}\hat{u}_i(x)\geq 0 \;\; whenever \;\; x\in \cX \setminus\Gamma.$
\end{itemize}
\end{theorem}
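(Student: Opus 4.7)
The plan is to test the minimality of $(u_1,\dots,u_k)\in\cK$ against single-vertex perturbations and read off the resulting Euler--Lagrange identities. Fix $x_0\in\cX\setminus\Gamma$ and an index $i$, and consider the competitor $v_i:=u_i+t\delta_{x_0}$, $v_j:=u_j$ for $j\ne i$. A short calculation using the graph identities $(\nabla u,\nabla\delta_{x_0})_{\ell^2(\cX^2)}=\cL u(x_0)$ and $\|\nabla\delta_{x_0}\|^2_{\ell^2(\cX^2)}=d(x_0)$, together with the definition $\hat u_i=u_i-\sum_{j\ne i}u_j$, shows that
\[
J(\vv)-J(\uu)=t\,\cL\hat u_i(x_0)+\tfrac12 t^2 d(x_0).
\]
Whenever $\vv\in\cK$ for $t$ in an open interval about $0$, minimality forces the coefficient of $t$ to vanish, giving $\cL\hat u_i(x_0)=0$; whenever admissibility only allows $t\ge 0$, we obtain the one-sided conclusion $\cL\hat u_i(x_0)\ge 0$.

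For part $(i)$, suppose $u_i(x_0)>0$. The segregation constraint forces $u_j(x_0)=0$ for every $j\ne i$, so the perturbed products $v_i(x_0)v_j(x_0)$ still vanish for every $t$, and nonnegativity $v_i(x_0)\ge 0$ is preserved for $|t|$ sufficiently small. Hence $\vv\in\cK$ for $t$ in a full neighborhood of $0$, and the first-order vanishing yields $\cL\hat u_i(x_0)=0$.

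For part $(ii)$, fix $x_0\in\cX\setminus\Gamma$ and an index $i$, and split into three cases according to the segregation pattern at $x_0$. If $u_i(x_0)>0$, part $(i)$ already gives $\cL\hat u_i(x_0)=0\ge0$. If $u_\ell(x_0)=0$ for every $\ell$, then the perturbation above is admissible for all $t\ge 0$ (segregation is trivially preserved and nonnegativity holds), so the one-sided conclusion gives $\cL\hat u_i(x_0)\ge 0$. The delicate case is when $u_i(x_0)=0$ but $u_{j_0}(x_0)>0$ for some $j_0\ne i$: an upward perturbation of $u_i$ at $x_0$ would instantly create coexistence with the positive value of $u_{j_0}$ and violate segregation, so the direct variational argument is blocked. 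We instead combine part $(i)$ applied to $j_0$, giving $\cL\hat u_{j_0}(x_0)=0$, with the elementary identity (obtained by linearity of $\cL$ from the definition of $\hat u_\ell$)
\[
\cL\hat u_i(x_0)+\cL\hat u_{j_0}(x_0)=-2\sum_{\ell\ne i,\,\ell\ne j_0}\cL u_\ell(x_0).
\]
Since $u_\ell(x_0)=0$ for every $\ell\ne j_0$, the formula $\cL u_\ell(x_0)=-\sum_y w_{x_0 y}u_\ell(y)$ is nonpositive (because $u_\ell\ge 0$), so the right-hand side is nonnegative and $\cL\hat u_i(x_0)\ge 0$.

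The main obstacle is this last sub-case, where segregation rules out the natural first-order variation; the combined use of the Euler--Lagrange identity at the positive index $j_0$ and the sign of $\cL u_\ell$ at the vanishing indices is what rescues the desired inequality. Apart from this, the argument is a straightforward exercise in first-variation computation on the graph.
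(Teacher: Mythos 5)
Your proof is correct and self-contained; it carries out essentially the argument that the paper itself only sketches by deferring to Proposition 1 and Lemma 2 of \cite{Av1}: one-point perturbations $u_i\mapsto u_i+t\delta_{x_0}$, two-sided when $u_i(x_0)>0$ and one-sided when all components vanish at $x_0$, together with the algebraic identity $\hat u_i+\hat u_{j_0}=-2\sum_{\ell\ne i,j_0}u_\ell$ and the sign of $\cL u_\ell(x_0)$ at vanishing components to handle the vertex where a different component is positive. The one caveat is notational rather than mathematical: for your first-variation formula $J(\vv)-J(\uu)=t\,\cL\hat u_i(x_0)+\tfrac12 t^2 d(x_0)$ to hold, the cross term $\sum_{i\neq j}(\nabla u_i,\nabla u_j)_{\ell^2(\cX^2)}$ in \eqref{sun2} must be read as a sum over unordered pairs (equivalently, it needs a factor $\tfrac12$ if taken over ordered pairs), which is the reading forced by the theorem's statement and by the scheme \eqref{scheme_sys}; likewise part (i) should be understood for $x\in\cX\setminus\Gamma$, exactly as you assume, since no admissible perturbation exists at boundary vertices.
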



To prove the uniqueness of the minimizer  $(u_1,\dots,u_k)\in \mathcal K$ one needs some technical lemmas.
\begin{lemma}\label{lemma1}
	Let $\cX=(V,E)$ be a connected graph. If any two vectors $(u_{1},u_{2},\dots,u_{k})$ and $(v_{1},v_{2},\dots,v_{k})$ are  minimizers to the \eqref{disc_min_problem}, then the following equation holds:
	\[
	\max_{x\in \cX}\left(\hat{u}_l(x)-\hat{v}_l(x)\right)=\max_{\{ x\in \cX\;:\; u_l(x)\leq v_l(x)\}}\left(\hat{u}_l(x)-\hat{v}_l(x)\right),
	\]
	for all $l=1,2,\dots,k$.
\end{lemma}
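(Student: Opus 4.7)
The plan is to apply a graph strong maximum principle to the function $w_l := \hat{u}_l - \hat{v}_l$. The key observation is that Theorem~\ref{limit_pde_discrete} provides an \emph{equation} for $\hat{u}_l$ precisely on $\{u_l>0\}$, which is exactly the complement of $\{u_l \le v_l\}\cap\{u_l>0\}$, while the weaker one-sided inequality for $\hat{v}_l$ holds on all of $\cX\setminus\Gamma$ regardless of where $v_l$ vanishes. Since $u_l = v_l = \phi_l$ on $\Gamma$, we have $w_l \equiv 0$ on $\Gamma$ — the ``boundary data'' for $w_l$.

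Set $M := \max_{x\in \cX} w_l(x)$ and $S := \{x \in \cX : w_l(x) = M\}$. The inequality $\max_{\{u_l\le v_l\}} w_l \le M$ is trivial, so the content of the lemma is to prove $S \cap \{u_l \le v_l\} \neq \emptyset$. If $M \le 0$, then since $\Gamma \subseteq \{u_l \le v_l\}$ and $w_l$ vanishes there, $\max_{\{u_l\le v_l\}} w_l \ge 0 \ge M$ and equality is immediate; hence one may assume $M > 0$.

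I would argue by contradiction: suppose $S \subseteq \{u_l > v_l\}$. Pick any $x_0 \in S$. Since $v_l(x_0)\ge 0$, the strict inequality $u_l(x_0) > v_l(x_0)$ forces $u_l(x_0) > 0$, and part~(i) of Theorem~\ref{limit_pde_discrete} yields $\cL \hat{u}_l(x_0) = 0$. Also $x_0 \in \cX \setminus \Gamma$ (because $w_l|_\Gamma = 0 < M = w_l(x_0)$), so part~(ii) gives $\cL \hat{v}_l(x_0) \ge 0$. Subtracting,
\[
\cL w_l(x_0) \;=\; \sum_{y} w_{x_0 y}\bigl(w_l(x_0) - w_l(y)\bigr) \;\le\; 0.
\]
Each summand is nonnegative because $w_l(x_0) = M \ge w_l(y)$, so each must vanish: $w_l(y) = M$ for every neighbor $y$ of $x_0$. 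Thus $S$ is closed under the graph-neighbor relation, and being nonempty in the connected graph $\cX$ it must coincide with $\cX$. This contradicts $\Gamma \cap S = \emptyset$, establishing the claim.

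The step I expect to be subtlest, and which is really the heart of the argument, is spotting that the right pairing is the equality~(i) for $\hat{u}_l$ (available only because $u_l(x_0)>0$, which is built into the assumption $u_l(x_0)>v_l(x_0)$) with the weaker inequality~(ii) for $\hat{v}_l$ (available unconditionally on $\cX\setminus\Gamma$, so no case split on whether $v_l(x_0)=0$ is required). Once this pairing is recognized, the remainder is the standard connectedness-plus-max-principle propagation on graphs.
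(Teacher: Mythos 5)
Your argument is correct and follows essentially the same route as the paper's: at a maximizer of $\hat{u}_l-\hat{v}_l$ lying in $\{u_l>v_l\}$ you pair the equation $\cL\hat{u}_l=0$ (valid since $u_l>0$ there) with the inequality $\cL\hat{v}_l\ge0$ to force the maximum onto all neighbors, and then propagate by connectedness to reach $\Gamma$, where the difference vanishes. Your phrasing via the maximizer set $S$ being closed under adjacency is a slightly cleaner packaging of the paper's shortest-path propagation, and your explicit treatment of the case $M\le0$ is a minor tidying of what the paper leaves implicit, but the substance is identical.
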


\begin{proof}
	We argue by contradiction. Suppose for some $l_0$ we have
	\begin{equation}\label{init_assmp}
		\begin{multlined}
			\hat{u}_{l_0}(x_0)-\hat{v}_{l_0}(x_0)=
			\max_{x\in  \cX}(\hat{u}_{l_0}(x)-\hat{v}_{l_0}(x))=\\=
			\max_{\{ x\in \cX\;:\; u_{l_0}(x)> v_{l_0}(x)\}}(\hat{u}_{l_0}(x)-\hat{v}_{l_0}(x))>
			\max_{\{ x\in \cX\;:\; u_{l_0}(x)\leq v_{l_0}(x)\}}(\hat{u}_{l_0}(x)-\hat{v}_{l_0}(x)).
		\end{multlined}
	\end{equation}	
It is easy to observe that  the following simple chain of inclusions hold:
	\begin{equation}\label{incl_chain}
		\{u_l(x)> v_l(x)\}\subset\{\hat{u}_l(x)> \hat{v}_l(x)\}\subset\{u_l(x)\geq v_l(x)\}.
	\end{equation}
	We obviously see that $ u_{l_0}(x_0)> v_{l_0}(x_0)\geq 0 $ implies
	$\hat{u}_{l_0}(x_0)>\hat{v}_{l_0}(x_0)$. On the other hand,  Theorem \ref{limit_pde_discrete}   gives us
	$$
     \mathcal{L}\hat{u}_i(x_0)= 0,
	$$
	and	
	$$
     \mathcal{L}\hat{v}_i(x_0)\geq 0.	 	
	$$
	Therefore 	
	\begin{equation*}
	\mathcal{L}(\hat{u}_i-\hat{v}_i)(x_0)\leq 0.
	\end{equation*}
	Thus,
	
	\begin{align}
		0  < \left(\hat{u}_{l_0}(x_0)- \hat{v}_{l_0}(x_0)\right)&\leq
	 \frac{1}{\deg(x_0)}\sum_{y\in \cX}w_{x_{0}y}\left(\hat{u}_{l_0}(y)-\hat{v}_{l_0}(y)\right)\nonumber,
	\end{align}
	which implies that $\hat{u}_{l_0}(x_0)-\hat{v}_{l_0}(x_0)=\hat{u}_{l_0}(y)-\hat{v}_{l_0}(y)>0,$ when $w_{x_{0}y}\ne0$. Due to the chain  \eqref{incl_chain}, we apparently have  ${u}_{l_0}(y)\geq {v}_{l_0}(y)$. According to our assumption \eqref{init_assmp}, the only possibility is ${u}_{l_0}(y)>{v}_{l_0}(y)$  for all $y\in \cX$.
	Now we can proceed the previous steps for all $y\in V$ such that $(x_0,y)\in E,$ and then for each one we will get corresponding neighbours with the same strict inequality and so on. Since the graph $\cX$ is connected, then  one can always find the shortest path from a given vertex $y$ to the vertex belonging $\Gamma.$   Continuing above procedure along this path we will finally approach to a vertex on $\Gamma,$ where as we know ${u}_{l_0}(x)={v}_{l_0}(x)={\phi}_{l_0}(x)$ for all $x\in\Gamma$. Hence, the strict inequality  fails, which implies that our initial assumption \eqref{init_assmp} is false. Observe that the same arguments can be applied if we interchange the roles of ${u}_{l}(x)$ and ${v}_{l}(x)$. Thus, we also have
	\[
	\max_{V}\left(\hat{v}_l(x)-\hat{u}_l(x)\right)=\max_{\{ v_l(x)\leq u_l(x)\}}\left(\hat{v}_l(x)-\hat{u}_l(x)\right),
	\]
	for every $l=1,2,\dots,m$.

	Particularly, for every fixed $l=1,2.\dots,m$ and $x\in V$ we have	\begin{multline}\label{double_ineq}
		-\max_{\{v_l(x)\leq u_l(x)\}}(\hat{v}_l(x)-\hat{u}_l(x))=	-\max\limits_{x\in V}(\hat{v}_l(x)-\hat{u}_l(x))\leq\\\leq \hat{u}_l(x)-\hat{v}_l(x) \leq 	\max\limits_{x\in V}(\hat{u}_l(x)-\hat{v}_l(x))=\max_{\{u_l(x)\leq v_l(x)\}}(\hat{u}_l(x)-\hat{v}_l(x)).
	\end{multline}
\end{proof}

Thanks to Lemma \ref{lemma1} in the sequel we will use the following notations:
$$
A:=\max_l\;\left(\max\limits_{x\in V}(\hat{u}_l(x)-\hat{v}_l(x))\right)=\max_l\;\left(\max\limits_{\{u_l(x)\leq v_l(x)\}}(\hat{u}_l(x)-\hat{v}_l(x))\right),
$$
and
$$
B:=\max_l\;\left(\max\limits_{x\in V}(\hat{v}_l(x)-\hat{u}_l(x))\right)=\max_l\;\left(\max\limits_{\{v_l(x)\leq u_l(x)\}}(\hat{v}_l(x)-\hat{u}_l(x))\right).
$$
Next lemma we write down without a proof. The proof can be easily adapted from  \cite{Av1}[Lemma $4$].
\begin{lemma}\label{lemma2}
	Let $\cX=(V,E)$ be a connected graph. Assume given two vectors $(u_{1},u_{2},\dots,u_{k})$ and $(v_{1},v_{2},\dots,v_{k})$ are  minimizers to the \eqref{disc_min_problem}. For them  we set $A$ and $B$ as defined above. If $A>0$ and it is attained for some $l_0$, then $A=B>0$  and there exists some $t_0\neq l_0,$ and $y_0\in V,$ such that
	$$
	0<A=\max_{\{u_{l_0}(x)\leq v_{l_0}(x)\}}(\hat{u}_{l_0}(x)-\hat{v}_{l_0}(x))=
	\max_{\{u_{l_0}(x)= v_{l_0}(x)=0\}}(\hat{u}_{l_0}(x)-\hat{v}_{l_0}(x))
	=\hat{v}_{t_0}(y_0)-\hat{u}_{t_0}(y_0).
	$$	
\end{lemma}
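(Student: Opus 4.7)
The plan is to exploit the segregation constraint $u_i\cdot u_j=0$ to reduce the claim to a local computation at a single node where $A$ is attained. By Lemma~\ref{lemma1} and the hypothesis that $A$ is attained at $l_0$, pick $x_*\in V$ with $u_{l_0}(x_*)\leq v_{l_0}(x_*)$ and $A=\hat u_{l_0}(x_*)-\hat v_{l_0}(x_*)>0$. The first task is to upgrade this inequality to the equality $u_{l_0}(x_*)=v_{l_0}(x_*)=0$, which is exactly the second equality in the statement. I would handle the remaining two possibilities by a short case analysis: if $u_{l_0}(x_*)>0$, segregation forces $\hat u_{l_0}(x_*)=u_{l_0}(x_*)\le v_{l_0}(x_*)=\hat v_{l_0}(x_*)$, giving $A\le 0$; if $u_{l_0}(x_*)=0<v_{l_0}(x_*)$, then $\hat u_{l_0}(x_*)\le 0<v_{l_0}(x_*)=\hat v_{l_0}(x_*)$, again contradicting $A>0$.

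With $u_{l_0}(x_*)=v_{l_0}(x_*)=0$, one has $A=\sum_{j\neq l_0}v_j(x_*)-\sum_{j\neq l_0}u_j(x_*)>0$, so segregation yields a unique index $t_0\neq l_0$ with $v_{t_0}(x_*)>0$. I would then evaluate $\hat v_{t_0}(x_*)-\hat u_{t_0}(x_*)$ in the two admissible segregation patterns for $\uu$ at $x_*$: either all $u_j(x_*)=0$, in which case the expression equals $v_{t_0}(x_*)=A$; or $u_{t_0}(x_*)>0$ (and all other $u_j(x_*)=0$), in which case it equals $v_{t_0}(x_*)-u_{t_0}(x_*)=A$. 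In either scenario $y_0:=x_*$ works and furnishes $B\geq A$.

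The main obstacle is to rule out the third segregation pattern, namely that some $u_{r_0}(x_*)>0$ with $r_0\notin\{l_0,t_0\}$. This is where the \emph{maximality} of $A$ is essential: in that scenario a direct computation gives $\hat u_{r_0}(x_*)-\hat v_{r_0}(x_*)=u_{r_0}(x_*)+v_{t_0}(x_*)$, strictly larger than $A=v_{t_0}(x_*)-u_{r_0}(x_*)$, contradicting the definition of $A$ as a maximum over all indices and nodes. Once this is in place, $A=B$ follows by running the same argument with $\uu$ and $\vv$ interchanged: since $B\geq A>0$, the symmetric form of Lemma~\ref{lemma1} applies and delivers a point where $\hat v-\hat u$ attains $B$ under the analogous vanishing condition, so the same local analysis yields $A\geq B$ and hence $A=B$.
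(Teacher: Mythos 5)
Your proof is correct. Note that the paper itself gives no proof of this lemma --- it only remarks that ``the proof can be easily adapted from \cite{Av1}[Lemma 4]'' --- so there is no in-paper argument to compare against; your write-up actually supplies the missing details. The structure is sound: Lemma \ref{lemma1} places a maximizer $x_*$ of $\hat u_{l_0}-\hat v_{l_0}$ in $\{u_{l_0}\le v_{l_0}\}$; your two-case elimination (using that $u_{l_0}(x_*)>0$ or $0=u_{l_0}(x_*)<v_{l_0}(x_*)$ each force $\hat u_{l_0}(x_*)-\hat v_{l_0}(x_*)\le 0$ via the segregation constraint) correctly yields $u_{l_0}(x_*)=v_{l_0}(x_*)=0$, which gives the second equality in the statement since $\{u_{l_0}=v_{l_0}=0\}\subset\{u_{l_0}\le v_{l_0}\}$. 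The identification $A=v_{t_0}(x_*)-\sum_{j\ne l_0}u_j(x_*)$, the exhaustion of the three segregation patterns for $\uu$ at $x_*$, and in particular the exclusion of the pattern $u_{r_0}(x_*)>0$ with $r_0\notin\{l_0,t_0\}$ by producing the value $u_{r_0}(x_*)+v_{t_0}(x_*)>A$ at index $r_0$ (contradicting the maximality of $A$ over all indices) all check out. The closing step $B\ge A$ together with the symmetric application of the lemma to get $B\le A$ is also legitimate, since Lemma \ref{lemma1} is stated symmetrically in $\uu$ and $\vv$. The one presentational gap is that you should state explicitly that the nonnegativity $u_i\ge 0$, $v_i\ge 0$ from \eqref{disc_min_set} is what makes $\hat u_{l_0}(x_*)\le 0$ in the second case and what guarantees at most one positive component per node; but these are immediate from the admissible set and do not affect correctness.
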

Now, we are ready to proof the uniqueness of the minimizer. The following Theorem is true.
\begin{theorem}[Uniqueness]
	Let $\cX=(V,E)$ be a connected graph. Then
	there exists  a unique minimizer  $(u_{1},u_{2},\dots,u_{k})\in \mathcal K,$ to minimization problem  \eqref{disc_min_problem}.

\end{theorem}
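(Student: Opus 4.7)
The plan is to argue by contradiction. Assume there are two distinct minimizers $(u_1,\dots,u_k)$ and $(v_1,\dots,v_k)$ in $\mathcal K$, and consider the quantities $A$ and $B$ introduced after Lemma \ref{lemma1}. I first reduce matters to showing $A = 0$ (with $B = 0$ then following by the symmetric argument obtained by swapping the roles of $\uu$ and $\vv$). Indeed, if $A = B = 0$ then $\hat u_l(x) = \hat v_l(x)$ for every $l$ and $x$; since segregation and non-negativity force at most one component to be positive at each node, the tuple $(\hat u_l(x))_l$ determines $(u_l(x))_l$ (the unique positive $u_l(x)$, if any, is realized at the argmax and equals the positive maximum), so $\uu = \vv$ throughout $\cX$ (using also $\uu = \vv = \phi$ on $\Gamma$).

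Suppose for contradiction $A > 0$. By Lemma \ref{lemma2}, $A = B > 0$, attained at a vertex $x_0 \in \cX\setminus\Gamma$ and index $l_0$ with $u_{l_0}(x_0) = v_{l_0}(x_0) = 0$. Expanding
$A = \hat u_{l_0}(x_0) - \hat v_{l_0}(x_0) = \sum_{j \ne l_0}\bigl(v_j(x_0) - u_j(x_0)\bigr)$
and using segregation (at most one $v_j$ and one $u_j$ is positive at $x_0$) yields a unique $q \ne l_0$ with $v_q(x_0) > 0$. A short case analysis on whether $u_j(x_0) > 0$ holds for some $j \ne l_0$, using $A = B$ as an upper bound on $\hat v_q(x_0) - \hat u_q(x_0)$, then forces $\hat v_q(x_0) - \hat u_q(x_0) = A$. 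The strategy from here is to propagate this equality along edges of $\cX$ until a boundary vertex is reached.

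The propagation step uses the following consequence of Theorem \ref{limit_pde_discrete}: since $v_q(x_0) > 0$, we have $\cL \hat v_q(x_0) = 0$, while $\cL \hat u_q(x_0) \ge 0$, so $\cL(\hat v_q - \hat u_q)(x_0) \le 0$. The discrete mean-value inequality $A = (\hat v_q - \hat u_q)(x_0) \le \cA(\hat v_q - \hat u_q)(x_0) \le A$ forces $\hat v_q(y) - \hat u_q(y) = A$ for every neighbor $y$ of $x_0$ with $w_{x_0y}>0$. Iterating---reselecting, at each new vertex, an index with the positivity needed to continue---and using the connectedness and finiteness of $\cX$, this propagation reaches a boundary vertex $y^\star \in \Gamma$. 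On $\Gamma$, $u_l = v_l = \phi_l$ for every $l$ gives $\hat u_l(y^\star) - \hat v_l(y^\star) = 0$, contradicting the propagated value $\pm A \ne 0$; hence $A = 0$, and by symmetry $B = 0$.

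I expect the main obstacle to be the index-switching step in the propagation. Each application of the mean-value inequality requires, at the current vertex and current index $q$, some positivity ($v_q > 0$ or $u_q > 0$) so that the corresponding $\cL \hat v_q$ or $\cL \hat u_q$ vanishes. When a propagation step lands on a vertex where $u_q = v_q = 0$, one must re-invoke Lemma \ref{lemma2} at that new vertex and repeat the case analysis of the second paragraph to identify a fresh index whose corresponding $\uu$- or $\vv$-component is positive. The segregation constraint, the inclusion chain $\{u_l>v_l\}\subset\{\hat u_l>\hat v_l\}\subset\{u_l\ge v_l\}$ from Lemma \ref{lemma1}, and the equality $A = B$ together make this bookkeeping possible, closely paralleling the analogous argument in \cite{Av1} for finite-difference grids.
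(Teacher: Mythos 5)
Your proposal is correct and follows essentially the same route as the paper's proof: reduce to the sign of $A$ via Lemmas \ref{lemma1} and \ref{lemma2}, then in the case $A>0$ propagate the equality $\hat v_{t_0}-\hat u_{t_0}=A$ along edges using $\cL\hat v_{t_0}=0$ on $\{v_{t_0}>0\}$ and $\cL\hat u_{t_0}\ge0$, switching indices where positivity fails, until a boundary vertex yields a contradiction. The only (harmless) cosmetic difference is that you phrase the easy case as ``$A=0$'' while the paper treats ``$A\le0$'' via the double inequality \eqref{double_ineq}, and you make explicit the step (implicit in the paper) that $\hat u_l=\hat v_l$ together with segregation recovers $u_l=(\hat u_l)^+=v_l$.
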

\begin{proof}
	
	Let two vectors $(u_{1},u_{2},\dots,u_{k})$ and $(v_{1},v_{2},\dots,v_{k})$ are minimizers to \eqref{disc_min_problem}. For these vectors we set the definition of $A$ and $B$. Then, we consider two cases $A\leq 0$ and $A>0$. If we assume that $A\leq 0,$ then according to Lemma \ref{lemma2}, we get  $B\leq 0$. But if  $A$ and $B$ are non-positive, then the uniqueness follows. Indeed,  due to  \eqref{double_ineq} we have the following obvious inequalities
	$$
	0\leq -B\leq \hat{u}_l(x)-\hat{v}_l(x)\leq A\leq 0.
	$$
	This provides  for every $l=\overline{1,k}$ and $x\in V$ we have $\hat{u}_l(x)=\hat{v}_l(x),$ which in turn implies $$
	{u}_l(x)={v}_l(x).
	$$
	Now suppose  $A>0$. Our aim is to prove that this case leads to a contradiction. Let the value $A$ is attained for some $l_0\in\overline{1,k},$ then
	due to Lemma \ref{lemma2} there exist $y_0\in V$ and $t_0\neq l_0$ such that:
	\begin{align*}
		0<A=B=&\max_{\{u_{l_0}(x)\leq v_{l_0}(x)\}}(\hat{u}_{l_0}(x)-\hat{v}_{l_0}(x))\\&=\max_{\{u_{l_0}(x)=v_{l_0}(x)=0\}}(\hat{u}_{l_0}(x)-\hat{v}_{l_0}(x))=\hat{v}_{t_0}(y_0)-\hat{u}_{t_0}(y_0).
	\end{align*}
Thus, along with the fact that $\hat{v}_{t_0}(y_0)>\hat{u}_{t_0}(y_0)$ implies ${v}_{t_0}(y_0)\geq {u}_{t_0}(y_0),$ we can repeat the same steps as in the proof of Lemma \ref{lemma1} to obtain
	$$
	\left(\hat{v}_{t_0}(y_0)- \hat{u}_{t_0}(y_0)\right)\leq \frac{1}{\deg(y_0)}\sum_{(y_0,z)\in E}w_{y_{0}z}\left(\hat{v}_{t_0}(z)-\hat{u}_{t_0}(z)\right).
	$$
	This  implies  $A=\hat{v}_{t_0}(y_0)-\hat{u}_{t_0}(y_0)=\hat{v}_{t_0}(z)-\hat{u}_{t_0}(z)>0$ for all $(y_0,z)\in E$.  The chain \eqref{incl_chain} provides that  for all $(y_0,z)\in E,$ we have ${v}_{t_0}(z)\geq{u}_{t_0}(z)$. Since a graph $\cX$ is connected, then one can always find a shortest path from $y_0$ to some vertex $w\in\Gamma.$ Assume the vertices along this path are $y_0;y_1;\dots;y_{k-1};y_q=w.$ Hence, for every $0 \leq j\leq q-1,$ we have $(y_j,y_{j+1})\in E,$ i.e. every vertex  $y_{j+1}$ is a closest neighbor for $y_{j}$ and $y_{j+2}.$

	According to the above arguments for the neighbor vertex $y_1\in V$ we proceed as follows: If  ${v}_{t_0}(y_1)>{u}_{t_0}(y_1),$ then obviously
	$$
	\left(\hat{v}_{t_0}(y_1)- \hat{u}_{t_0}(y_1)\right)\leq \frac{1}{\deg(y_1)}\sum_{(y_1,z)\in E}w_{y_{1}z}\left(\hat{v}_{t_0}(z)-\hat{u}_{t_0}(z)\right).
	$$
	This, as we saw a few lines above, leads to  $A=\hat{v}_{t_0}(y_1)-\hat{u}_{t_0}(y_1)=\hat{v}_{t_0}(z)-\hat{u}_{t_0}(z)>0$ for all $(y_1,z)\in E$.  In particular,
	$A = \hat{v}_{t_0}(y_2)-\hat{u}_{t_0}(y_2)> 0.$
	
	If  ${v}_{t_0}(y_1)={u}_{t_0}(y_1),$  then due to $\hat{v}_{t_0}(y_1)-\hat{u}_{t_0}(y_1)=A=B>0,$ there exists some $\lambda_0\neq t_0,$ such that
	$$
	0<A=\hat{v}_{t_0}(y_1)-\hat{u}_{t_0}(y_1)=
	\sum_{l\neq t_0}\left({u}_{l}(y_1)-{v}_{l}(y_1) \right)={u}_{\lambda_0}(y_1)-\sum_{l\neq t_0}{v}_{l}(y_1).
	$$
	Note that ${u}_{\lambda_0}(y_1)>0$ implies ${u}_{l}(y_1)=0$ for all $l\neq \lambda_0,$ and particularly  ${v}_{t_0}(y_1)={u}_{t_0}(y_1)=0.$
	Following the definition of $A,$ we get
	$$
	A={u}_{\lambda_0}(y_1)-\sum_{l\neq t_0}{v}_{l}(y_1)\geq \hat{u}_{\lambda_0}(y_1)-\hat{v}_{\lambda_0}(y_1),
	$$
	which in turn gives $2\sum\limits_{l\neq \lambda_0}{v}_{l}(y_1)\leq 0,$ and therefore
	${v}_{l}(y_1)=0$ for all $l\neq \lambda_0$. Hence
	$$
	A={u}_{\lambda_0}(y_1)-\sum_{l\neq t_0}{v}_{l}(y_1)= \hat{u}_{\lambda_0}(y_1)-\hat{v}_{\lambda_0}(y_1),
	$$
	This suggests us to apply the same approach as above to arrive at
	$$
	\left(\hat{v}_{\lambda_0}(y_1)- \hat{u}_{\lambda_0}(y_1)\right)\leq \frac{1}{\deg(y_1)}\sum_{(y_1,z)\in E}w_{y_{1}z}\left(\hat{v}_{\lambda_0}(z)-\hat{u}_{\lambda_0}(z)\right),
	$$
	which leads to
	$A=\hat{u}_{\lambda_0}(y_1)-\hat{v}_{\lambda_0}(y_1)=\hat{u}_{\lambda_0}(z)-\hat{v}_{\lambda_0}(z)>0,$ for all $(y_1,z)\in E$. In particular,
	$A = \hat{u}_{\lambda_0}(y_2)-\hat{v}_{\lambda_0}(y_2)> 0.$
	Thus, combining two cases we observe that for $y_2\in V$ there exist an index $1\leq l_{y_2}\leq m$ (in our case $l_{y_2}=t_0$ or $l_{y_2}=\lambda_0$) such that
	\begin{equation}\label{contradiction}
		\mbox{either}\;\;	\hat{u}_{l_{y_2}}(y_2)-\hat{v}_{l_{y_2}}(y_2)=A, \;\;\mbox{or}\;\;
		\hat{u}_{l_{y_2}}(y_2)-\hat{v}_{l_{y_2}}(y_2)=-A.
	\end{equation}
	It is not hard to understand that the same procedure can be repeated for a vertex $y_2$ instead of $y_1$ and come to the same conclusion \eqref{contradiction} for $y_3\in V$ and some index $l_{y_3}$ and so on. This allows to claim that for every $y_j\in V$ along the path $(y_0,\dots,y_q)$  there exist some $l_{y_j}$ such that
	\[
	|\hat{u}_{l_{y_j}}(y_j)-\hat{v}_{l_{y_j}}(y_j)|=A>0.
	\]
	But this means that above equality holds also for $y_q=w\in \Gamma,$ which will lead to a contradiction, because for every $z\in\Gamma,$ and $l=1,\cdots,k$ one has $\hat{u}_{l}(z)-\hat{v}_{l}(z)=0$.  This completes the proof of uniqueness.
\end{proof}

\subsection{Semi-supervised learning algorithm}

Using definition of graph Laplacian in  $$\cL (u_{i}- \sum\limits_{j\neq i}u_j )\ge 0,$$ yields
\begin{equation}\label{17}
	\cL (u_{i} - \sum\limits_{j\neq i}u_{j})(x_l) =
	\sum\limits_{s=1}^{n} w_{ls}\, \left( u_{i}(x_{l})- u_{i}(x_s) -  \sum\limits_{j\neq i}(u_{j}(x_l) - u_{j}(x_s))  \right).
\end{equation}
To  obtain   $u_{i} (x_{l})$    from (\ref{17})  we impose   the following  conditions
\[
u_{i} (x_{l}) \cdot u_{j} (x_{l}) =0  \text{ and }    u_{i}(x_l)\geq 0.
\]
From these
\[
\deg(x_{l}) u_{i} (x_{l})-\sum\limits_{s=1}^{n} w_{ls}\,u_{i} (x_{s})+ \sum\limits_{s=1}^{n}\sum\limits_{j\neq i}w_{ls} \,u_{j} (x_{s})=0
\]
Then
\[
u_{i} (x_{l})=\overline{u}_{i}(x_{l})-\sum\limits_{j\neq i}\overline{u}_{j}(x_{l}).
\]

According to the above ideas and following the Theorem \ref{limit_pde_discrete} we can easily check that if $(u_1,u_2,\dots,u_k)\in \mathcal K$ is a unique minimizer to \eqref{disc_min_problem}, then  it satisfies the following system of equations:
\begin{equation}\label{scheme_sys}
	\begin{cases}
		u_{1}(x) =\max \left(\overline{u}_1(x) - \sum\limits_{p \neq 1}  \overline{u}_p(x), \,  0\right),\;\;x\in \cX\setminus\Gamma,\\
		u_{2}(x) =\max \left(\overline{u}_2(x) - \sum\limits_{p \neq 2}  \overline{u}_p(x), \,  0\right),\;\;x\in \cX\setminus\Gamma,\\
		\dots\dots\dots\dots\\
		u_{k}(x) =\max \left(\overline{u}_k(x) - \sum\limits_{p \neq k}  \overline{u}_p(x), \,  0\right),\;\;x\in \cX\setminus\Gamma,\\
		u_{i}(x) =\phi_{i}(x),\;\;x\in \Gamma,\; \mbox{for all}\; i=1,2,\dots,k.
	\end{cases}
\end{equation}
\begin{remark}\label{remark_1}
	We remark that the system \eqref{scheme_sys} itself implies the disjointness  property, i.e. it is easy to see that  if a vector $(u_1,u_2,\dots,u_k)$ satisfies the system \eqref{scheme_sys}, then  $u_i(x)\cdot u_j(x)=0,$ for every $x\in V$ and $i\neq j.$
\end{remark}

In order to approximate   the solution of   system \eqref{scheme_sys} we propose the iterative  scheme which is  easy to implement   as follows:
For  $i=1,\cdots k,$ and $x_l\in\cX\setminus\Gamma$ we set
\begin{equation*}
u_{i}^{(t+1)} (x_{l}) =\max\left(\overline{u}^{(t)}_{i}(x_{l})-\sum\limits_{j\neq i}\overline{u}^{(t)}_{j}(x_{l}),\,  0\right).
\end{equation*}

In the lite of Remark \ref{remark_1} it can be seen that for every iteration the disjointness property is preserved. In other words the following lemma is true.
\begin{lemma}\label{lemma}
		Let $\cX=(V,E)$ be a connected graph. The above iterative method  satisfies
	\[u_i^{(t)}(x)\cdot u_j^{(t)}(x) =0,\; \forall x\in V,\; i\neq j.\]
\end{lemma}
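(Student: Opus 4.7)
\medskip
\noindent\textbf{Proof proposal for Lemma \ref{lemma}.} I would proceed by induction on the iteration index $t$. The base case reduces to showing that the initial guess $u_i^{(0)}$ is chosen in the admissible class $\mathcal K$, so in particular $u_i^{(0)}(x)\ge 0$ for all $x\in V$ and $u_i^{(0)}(x)\cdot u_j^{(0)}(x)=0$ for $i\ne j$; this is part of the setup of the scheme. For the inductive step I would show the stronger joint statement: if $u^{(t)}$ is componentwise nonnegative and satisfies the disjointness property, then $u^{(t+1)}$ inherits both properties. Nonnegativity of $u^{(t+1)}$ is built into the definition via the truncation $\max(\cdot,0)$, and on boundary nodes the assignment $u_i^{(t+1)}(x)=\phi_i(x)$ together with the assumption that at most one $\phi_i$ is positive at each $x\in\Gamma$ immediately gives disjointness on $\Gamma$.

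The only substantive point is disjointness off the boundary, and this is the algebraic core of the argument. Because $u^{(t)}_p\ge 0$, the averages $\overline u^{(t)}_p(x_l)\ge 0$ for every $p$ and every $x_l$. I would argue by contradiction: suppose there exist $x_l\in\cX\setminus\Gamma$ and indices $i\ne j$ with
\[
u_i^{(t+1)}(x_l)>0 \quad\text{and}\quad u_j^{(t+1)}(x_l)>0.
\]
By the definition of the iteration, both maxima are attained at the positive branch, i.e.
\[
\overline u_i^{(t)}(x_l) > \sum_{p\ne i}\overline u_p^{(t)}(x_l), \qquad \overline u_j^{(t)}(x_l) > \sum_{p\ne j}\overline u_p^{(t)}(x_l).
\]
Adding these two strict inequalities and regrouping, both sums on the right contain $\overline u_i^{(t)}(x_l)+\overline u_j^{(t)}(x_l)$ together with $2\sum_{p\ne i,j}\overline u_p^{(t)}(x_l)$, and this cancels the left side to yield
\[
0 > 2\sum_{p\ne i,j}\overline u_p^{(t)}(x_l),
\]
which is impossible by nonnegativity of the averages. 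Hence at most one component of $u^{(t+1)}(x_l)$ is strictly positive, which is the disjointness property.

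I expect no genuine obstacle here; the lemma is a direct structural consequence of the form of the update, and is essentially the reasoning already flagged in Remark \ref{remark_1}. The only care needed is to set up the induction so that nonnegativity is carried along simultaneously with disjointness, since nonnegativity of $u^{(t)}$ is what legitimizes the simple cancellation in the contradiction step above. Once that bookkeeping is in place the proof is a two-line algebraic check per iteration.
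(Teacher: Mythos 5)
Your proof is correct and is essentially the argument the paper intends: the paper disposes of Lemma \ref{lemma} by pointing to Remark \ref{remark_1}, whose ``easy to see'' content is exactly your step of adding the two strict inequalities $\overline u_i^{(t)}>\sum_{p\ne i}\overline u_p^{(t)}$ and $\overline u_j^{(t)}>\sum_{p\ne j}\overline u_p^{(t)}$ to reach $0>2\sum_{p\ne i,j}\overline u_p^{(t)}$. Your induction merely supplies the routine bookkeeping (nonnegativity from the truncation, disjointness of the initial guess and of the boundary data) that the paper leaves implicit.
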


The label decision for vertex $x_l$ is determined by the strictly positive component
$u_i(x_l),$ i.e. find an index $i_0$ such that $u_{i_0}(x_l)>0$. Thus, in this case the label corresponding to a vertex $x_l$ will be $i_0.$

\

\section{Experimental results}

In this section we are going to test and compare two well-known semi-supervised learning algorithms to the one we have developed based on segregation theory. We note that our taken dataset for visual implementations will be the random generated half-moons and for the statistic analysis we will use the well-known MNIST. Thus, we will depict the predictions for Laplace learning, Poisson learning and our learning (We call it Segregation learning) algorithms.

We  run the learning algorithms for different number of initial label of classes and for different number of classes (basically we will run for $3, 4$ and $5$ classes). For each  implementation all the classes have the same number of nodes, i.e, either all classes have   $200$ or $300$ nodes. The reader can also observe the red nodes on every figure. They correspond to the randomly chosen initial known labels.
In the figures \ref{fig1}--\ref{fig15} below one can observe that, when the initial number of labels per class is small, i.e. $2, 3$ or $5$ labels, then the Laplace learning algorithm is performing poorly, whilst both the Poisson and our Segregation learning algorithms are performing  much better and have more or less  the same accuracy.

When the initial number of labels per class is $10$ or $20$ labels, then the performance of the Laplace learning becomes more accurate and is getting close to the results depicted for Poisson and Segregation learning algorithms.

Tables \ref{table_1} and \ref{table_2}  show the average accuracy over all $100$ trials for various low and high label rates. The implementations have been done on MNIST dataset only for $3$ classes.  We see that for low label rates Laplace learning performs poor as we noted in the depicted figures. On the other hand Poisson and Segregation learning perform better and predicted  more or less with the same accuracy. For high label rates Laplace learning performs much better  and gets close to Poisson and Segregation learning results.
\begin{figure}[!htb]
	\includegraphics[width=.32\linewidth,valign=m]{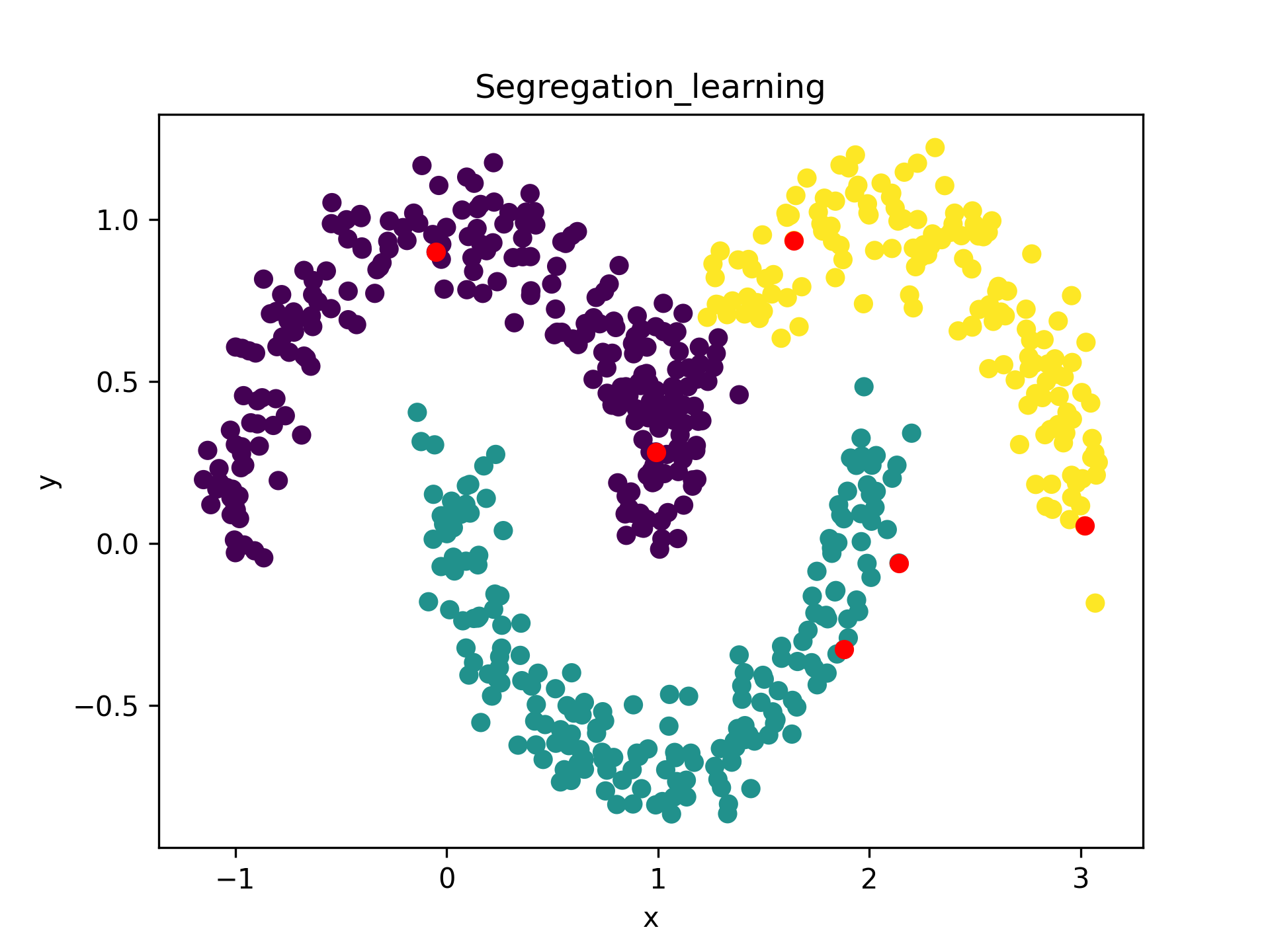}
	\includegraphics[width=.32\linewidth,valign=m]{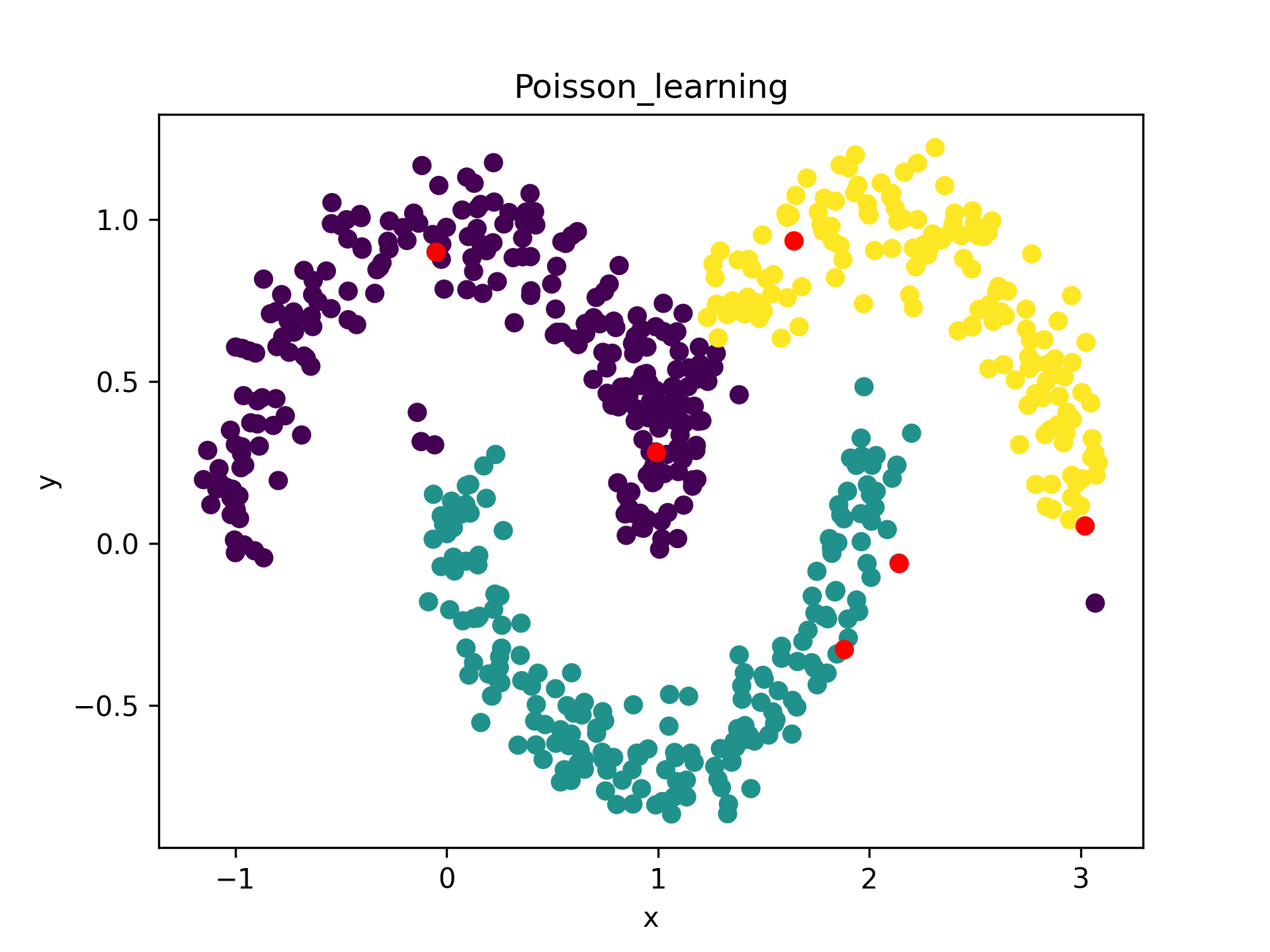}
	\includegraphics[width=.32\linewidth,valign=m]{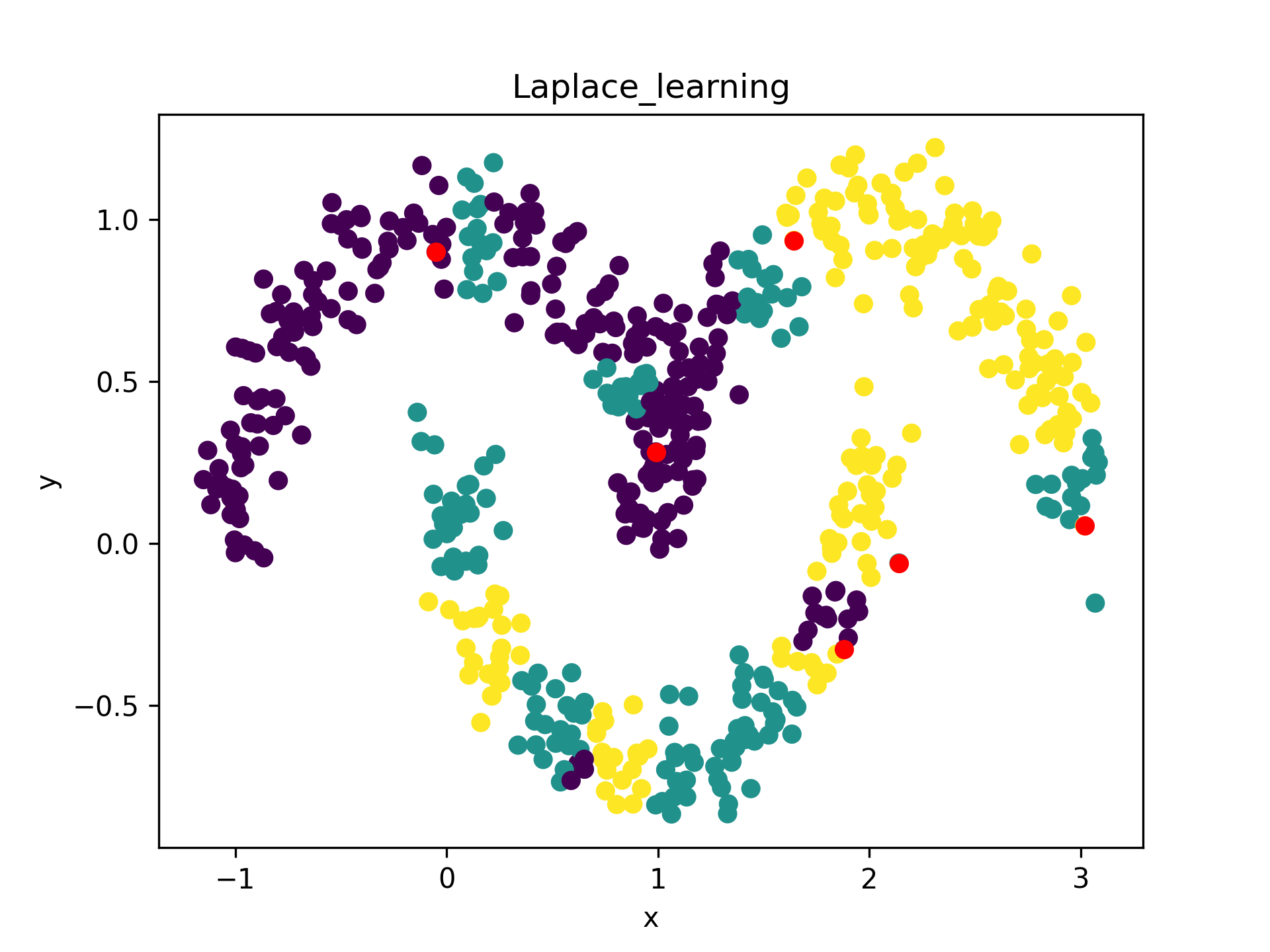}
	\caption{Comparison of Laplace, Poisson and Segregation learning algorithms for 3 classes and initial 2 labels per class.}	
	\label{fig1}
\end{figure}

\begin{figure}[!htb]
	\centering
	\includegraphics[width=.32\linewidth,valign=m]{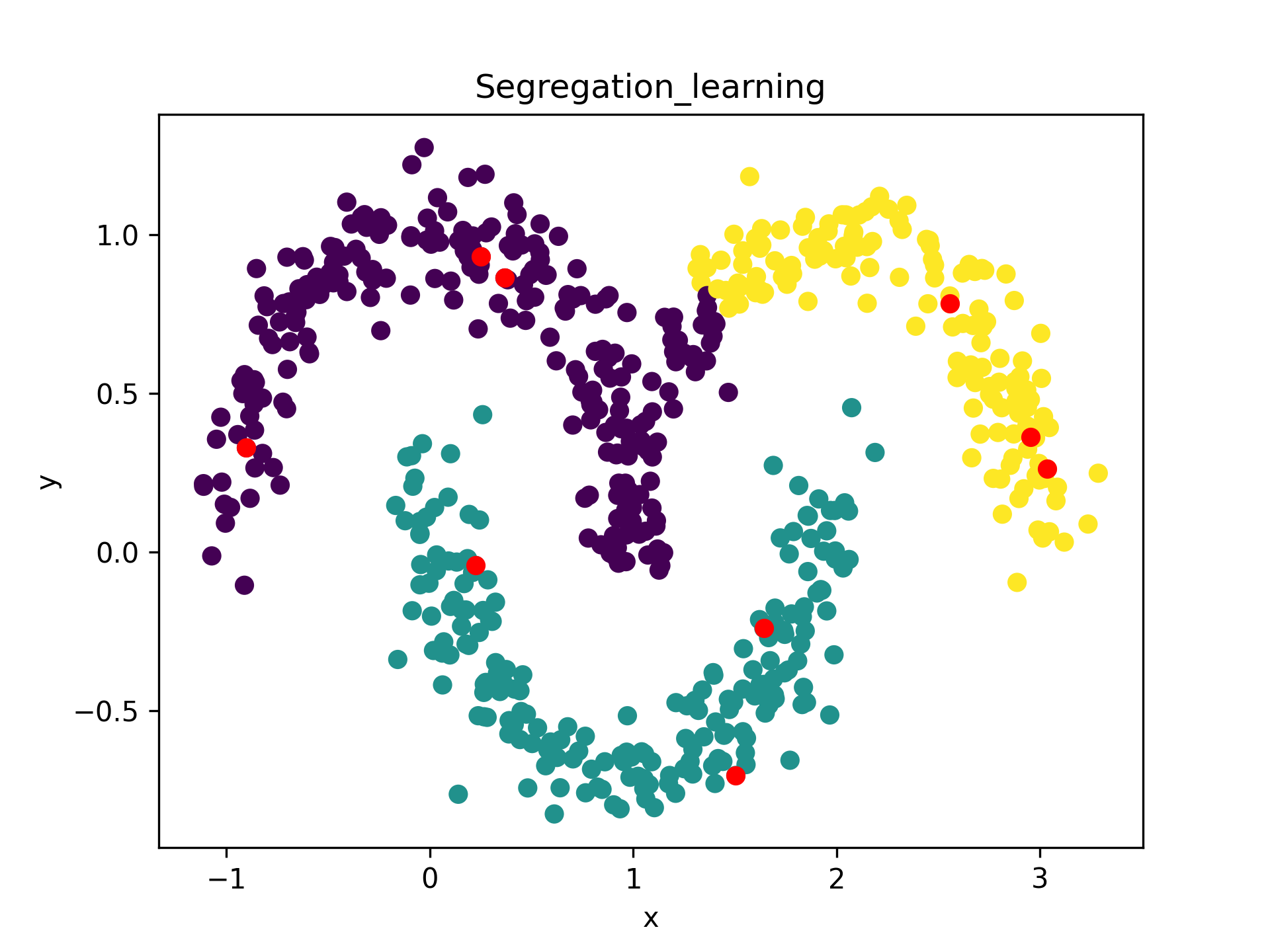}  \includegraphics[width=.32\linewidth,valign=m]{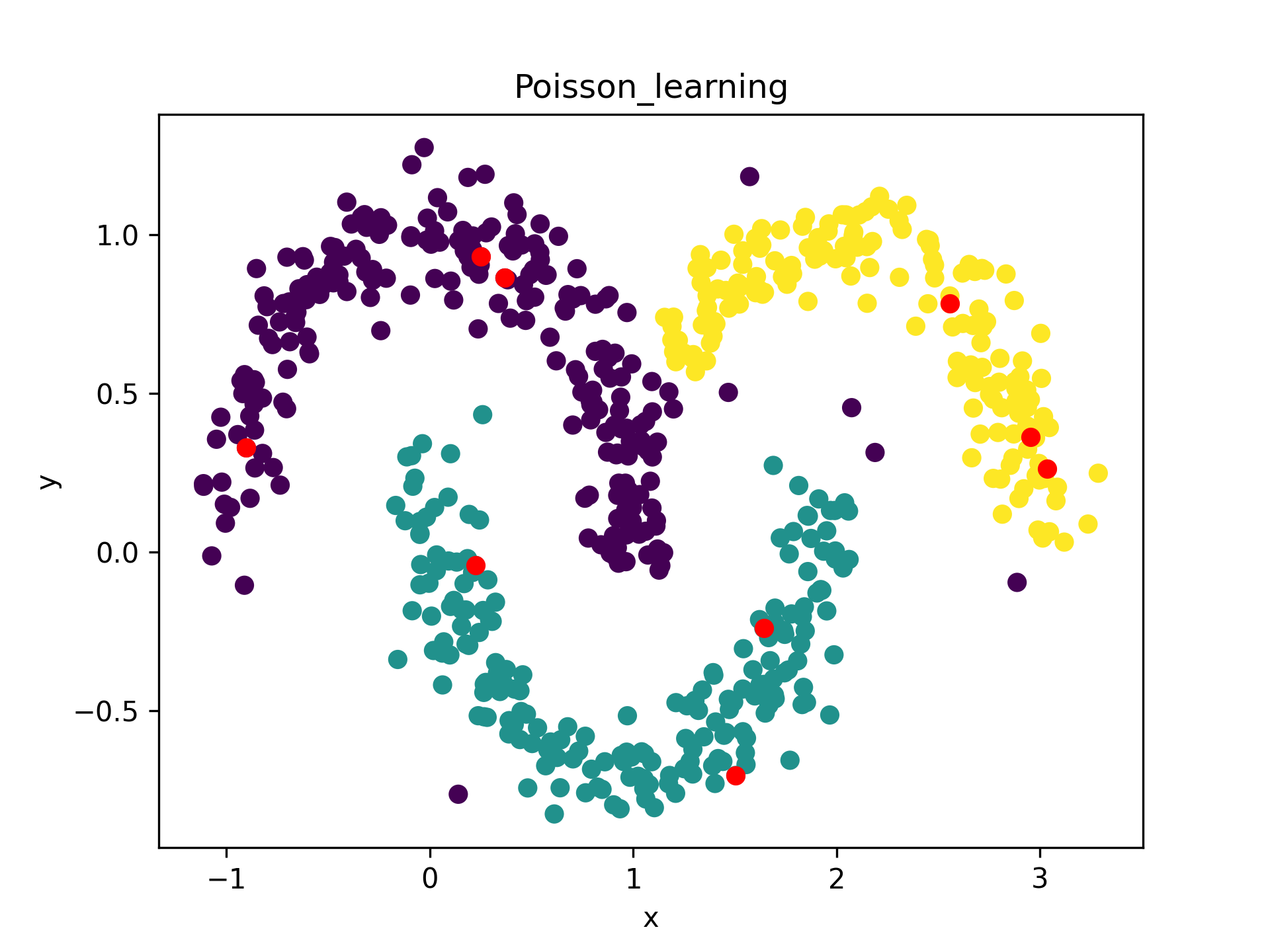}  \includegraphics[width=.32\linewidth,valign=m]{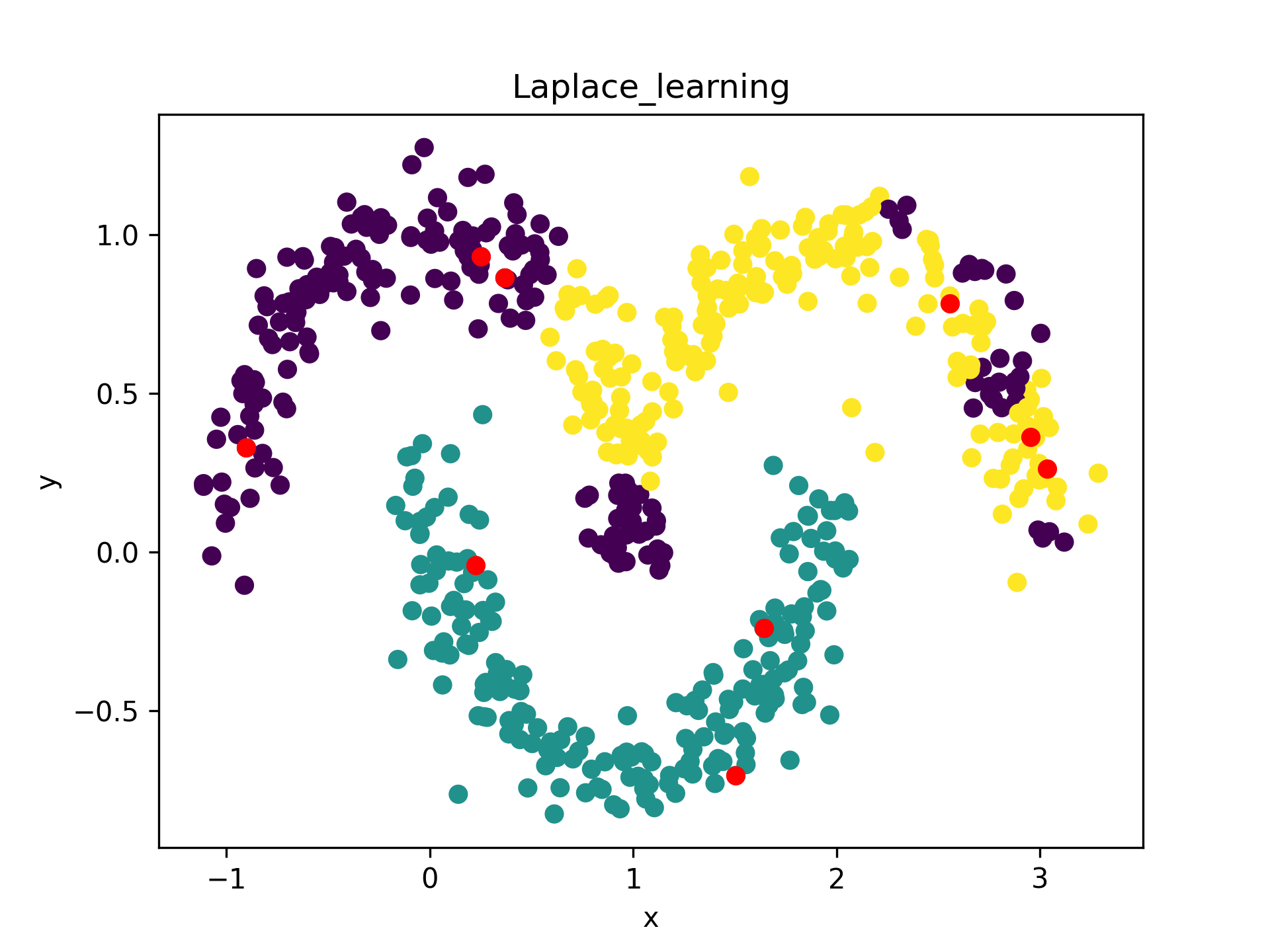}
	\caption{Comparison of Laplace, Poisson and Segregation learning algorithms for 3 classes and initial 3 labels per class.}
	\label{fig2}	
\end{figure}
\begin{figure}[!htb]\label{fig3}
	\includegraphics[width=.32\linewidth,valign=m]{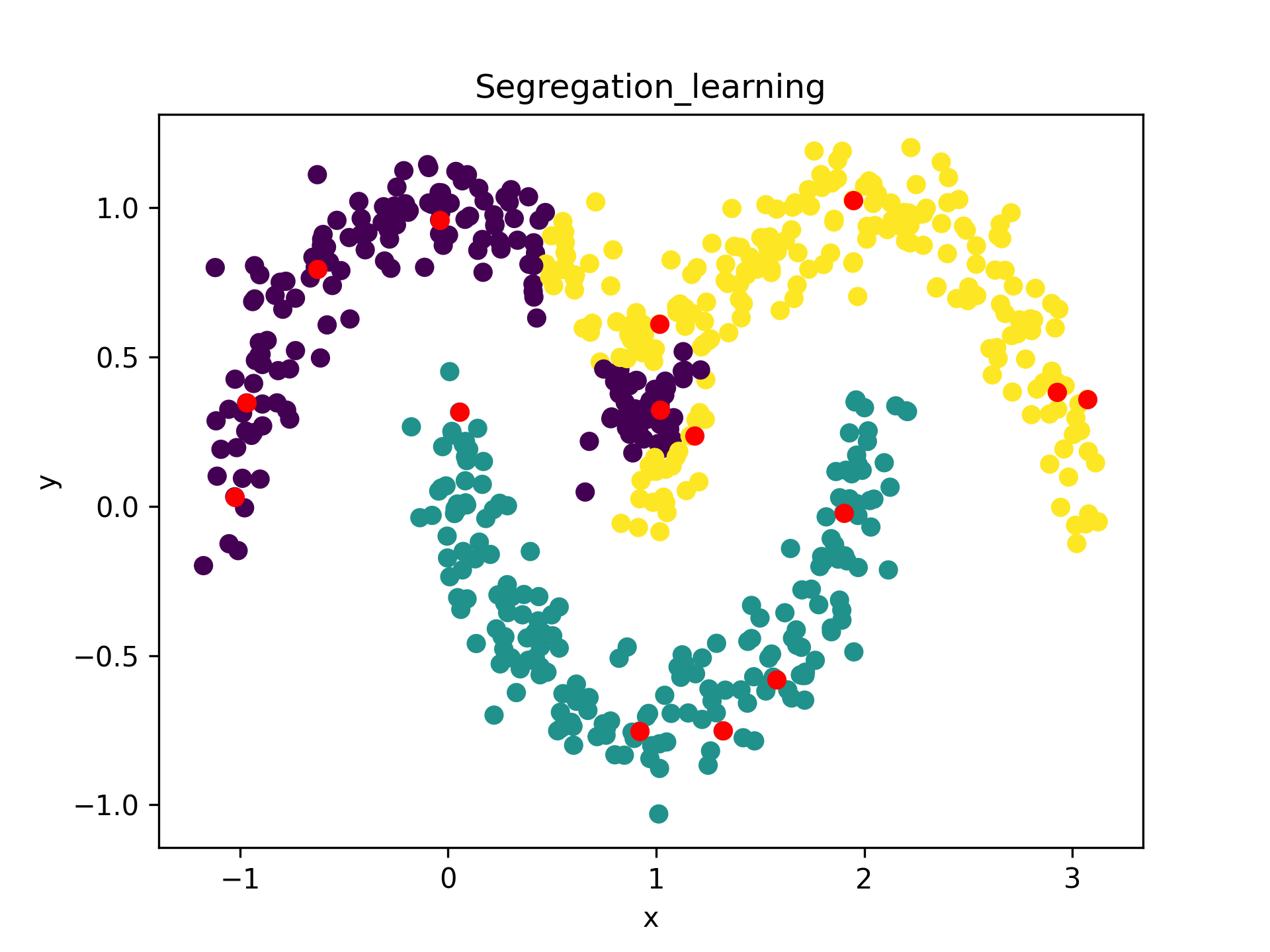}  \includegraphics[width=.32\linewidth,valign=m]{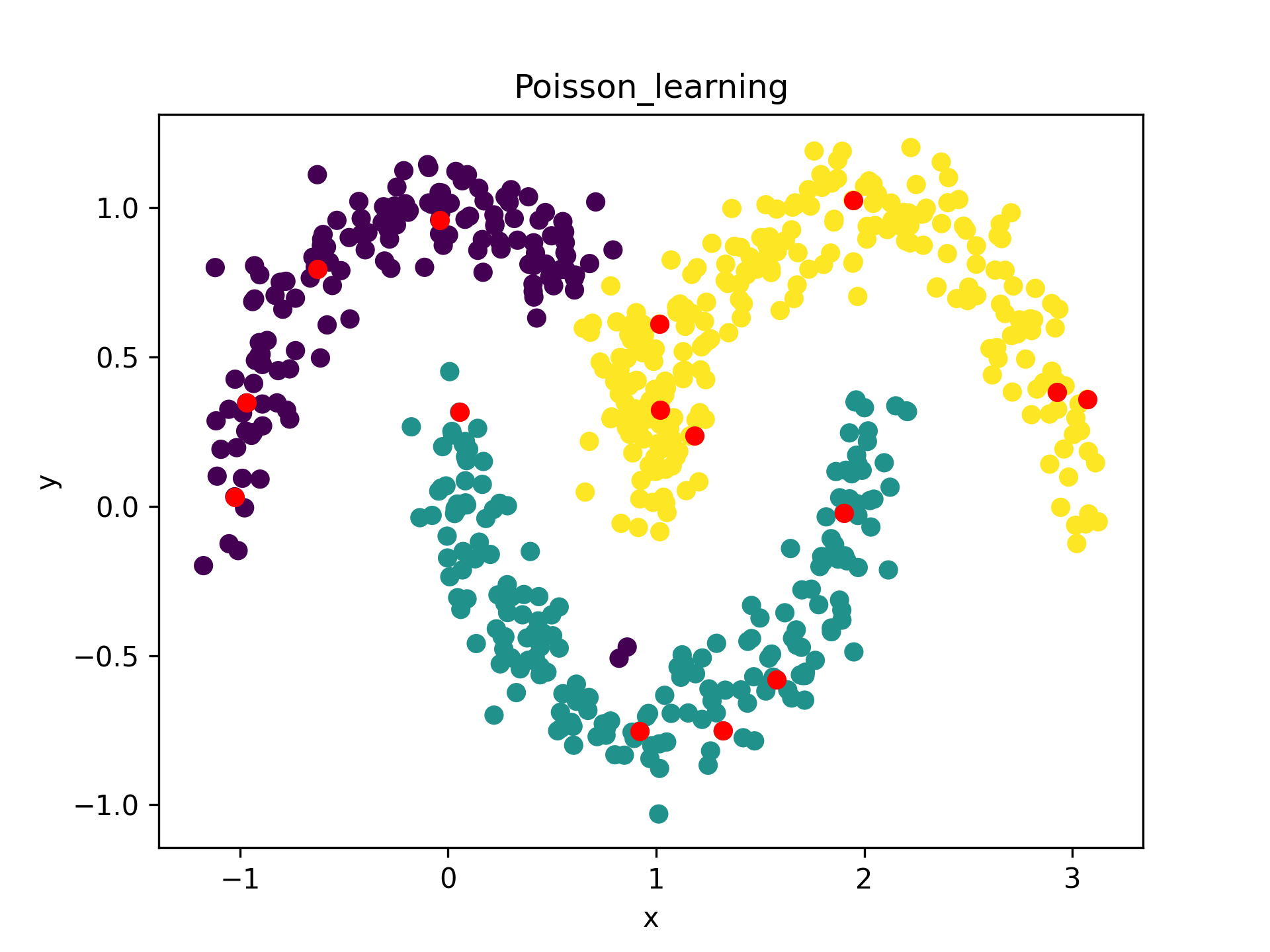}  \includegraphics[width=.32\linewidth,valign=m]{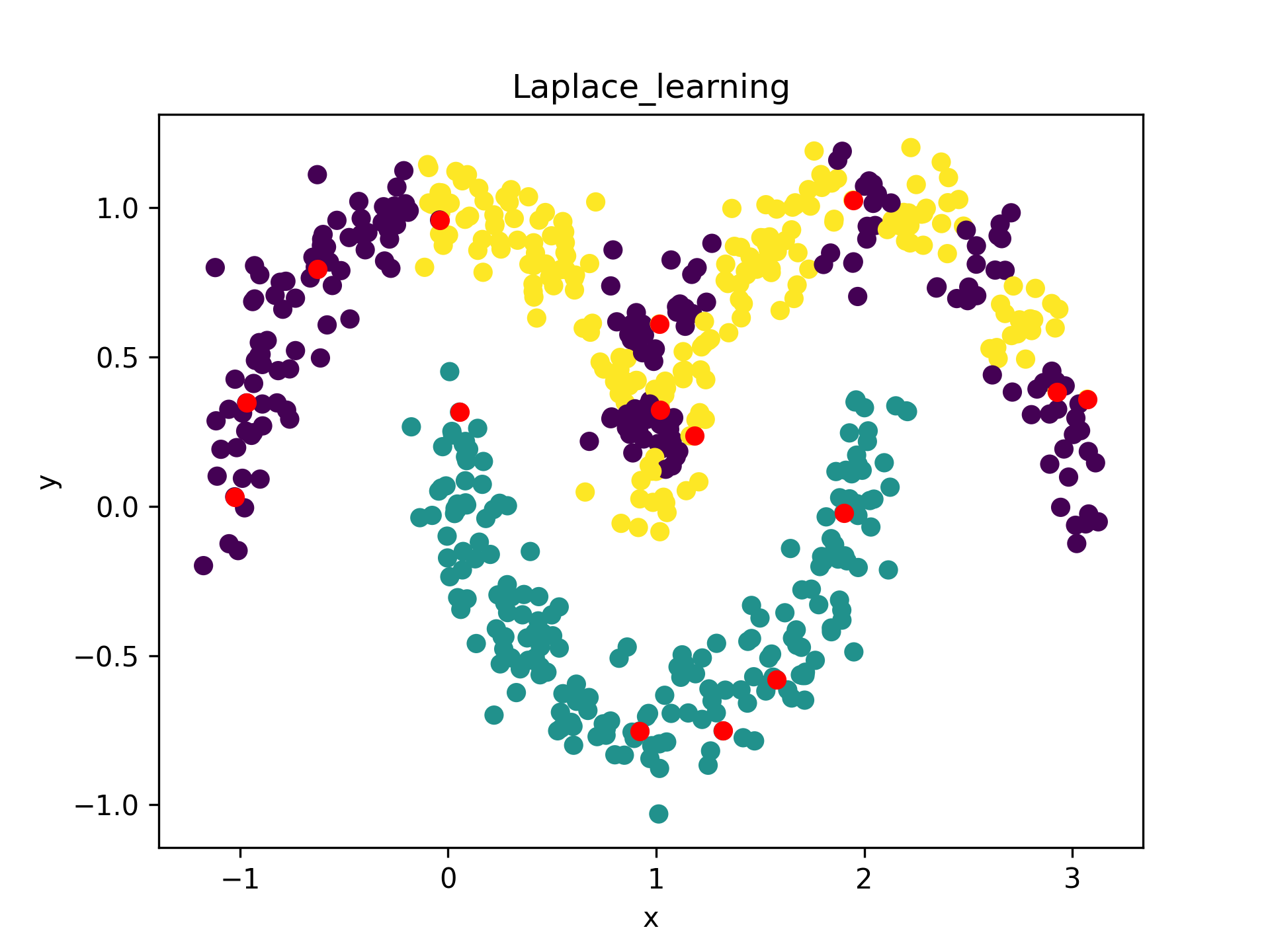}
	\caption{Comparison of Laplace, Poisson and Segregation learning algorithms for 3 classes and initial 5 labels per class.}	
\end{figure}
\begin{figure}[!htb]\label{fig4}
	\includegraphics[width=.32\linewidth,valign=m]{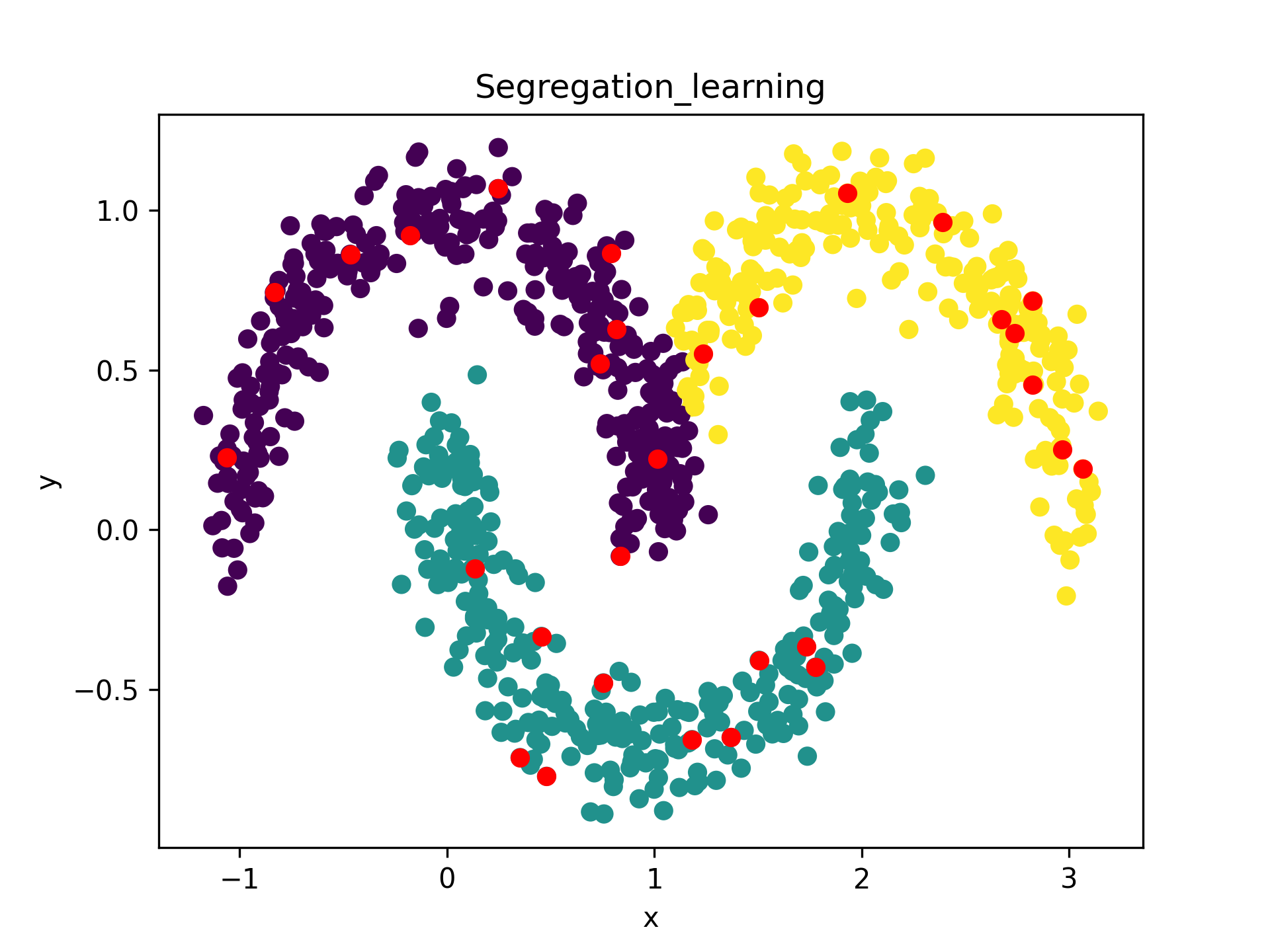}  \includegraphics[width=.32\linewidth,valign=m]{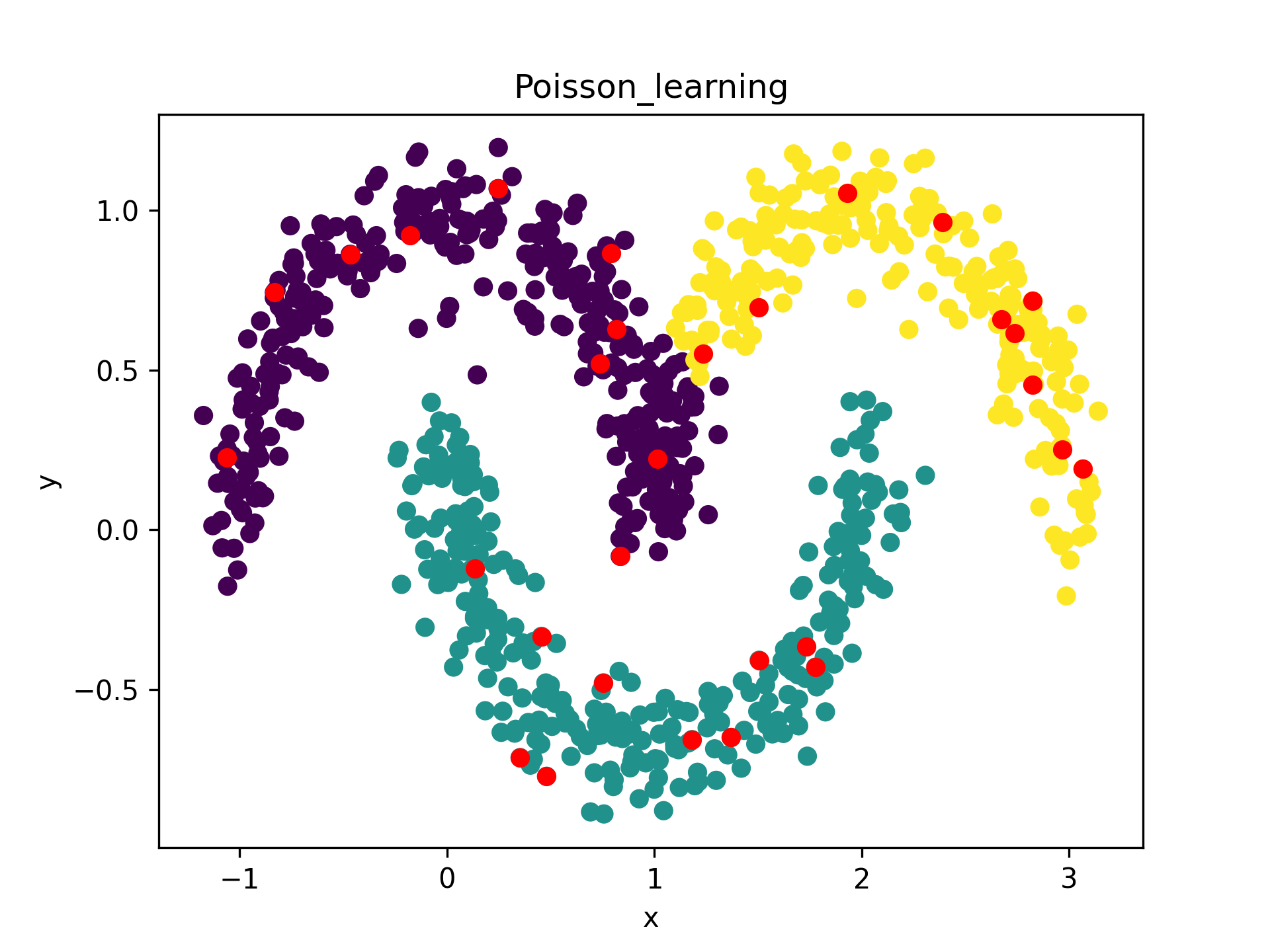}  \includegraphics[width=.32\linewidth,valign=m]{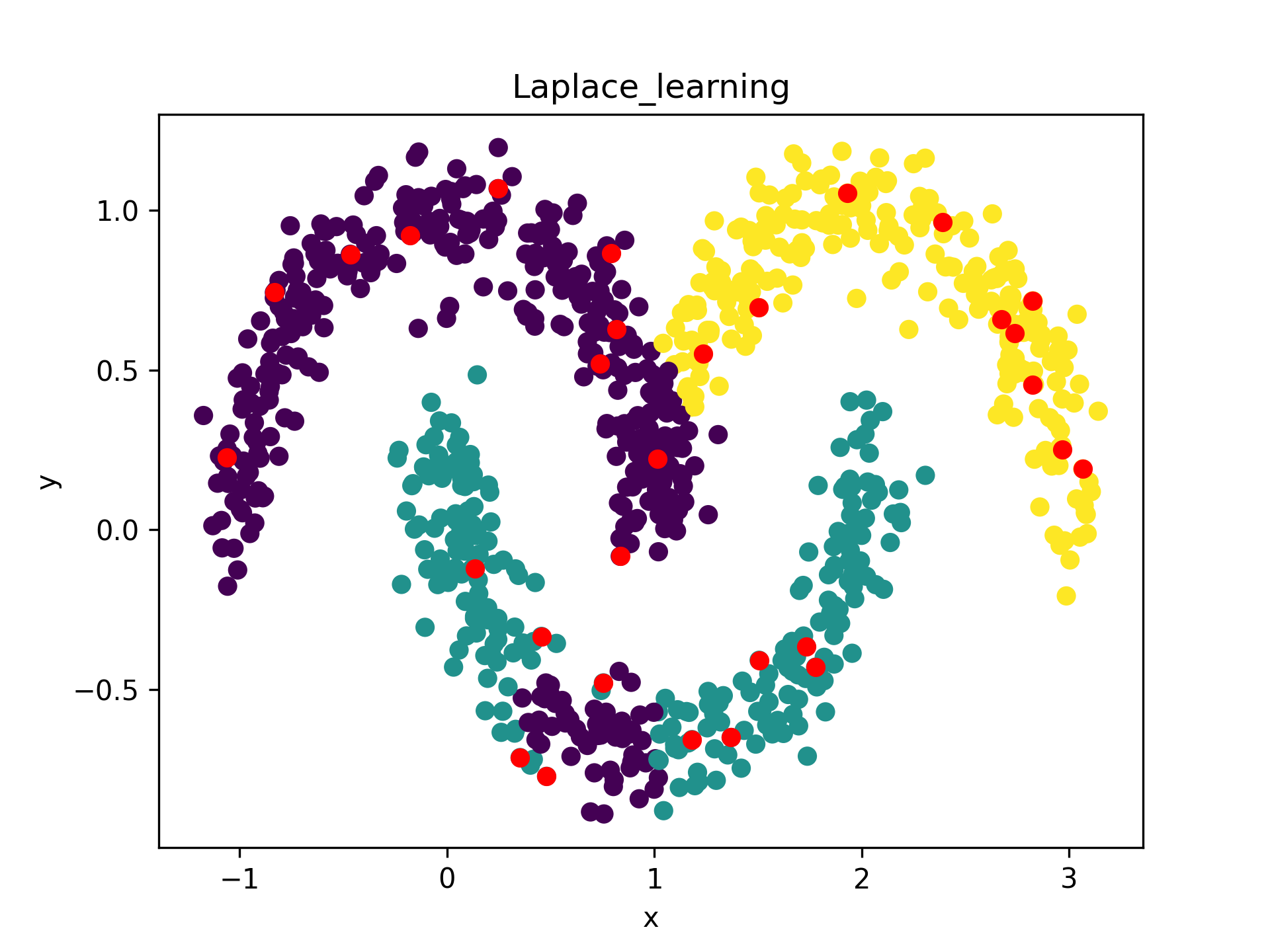}
	\caption{Comparison of Laplace, Poisson and Segregation learning algorithms for 3 classes and initial 10 labels per class.}			
\end{figure}
\begin{figure}[!htb]\label{fig5}
	\includegraphics[width=.32\linewidth,valign=m]{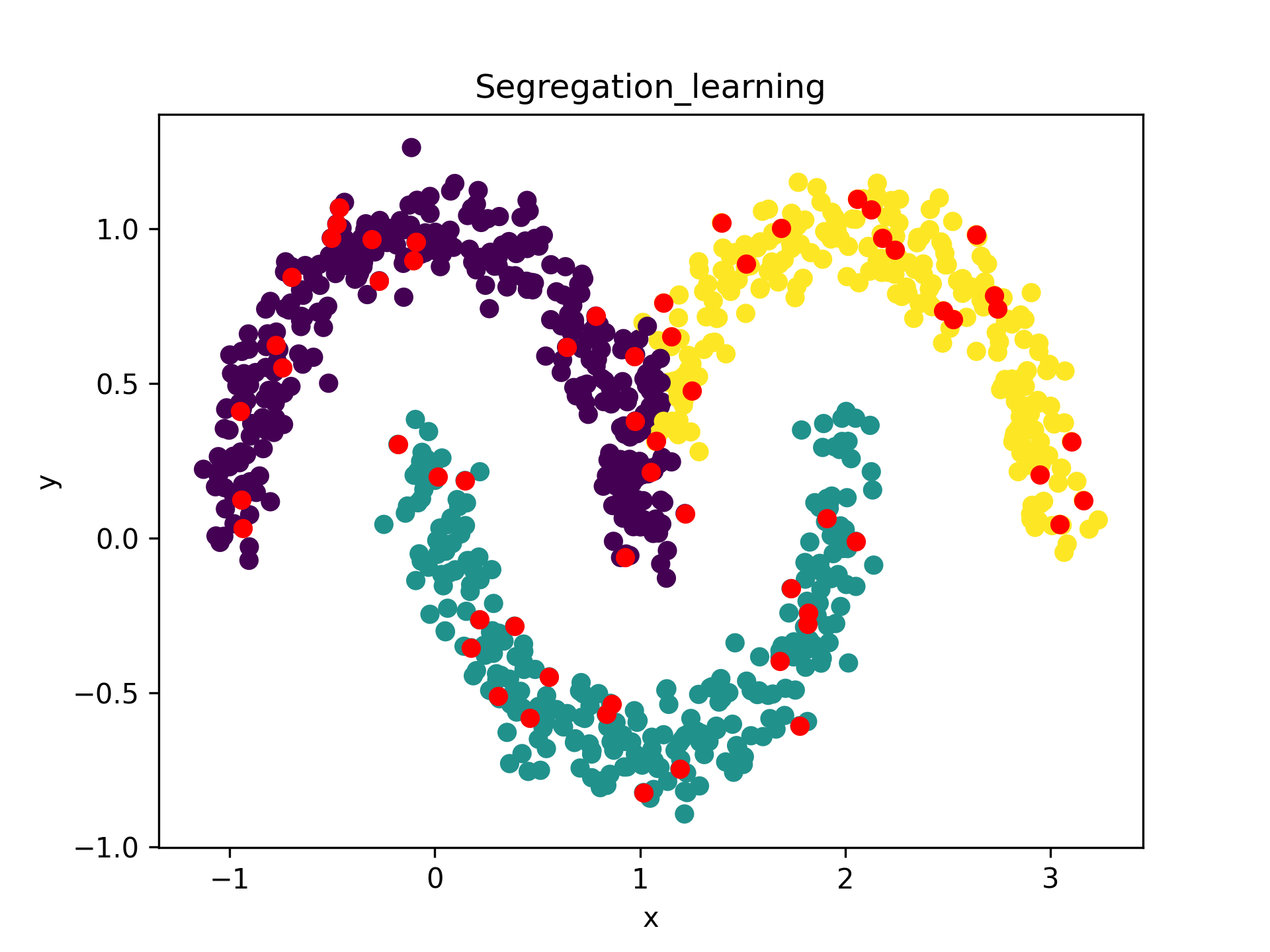}  \includegraphics[width=.32\linewidth,valign=m]{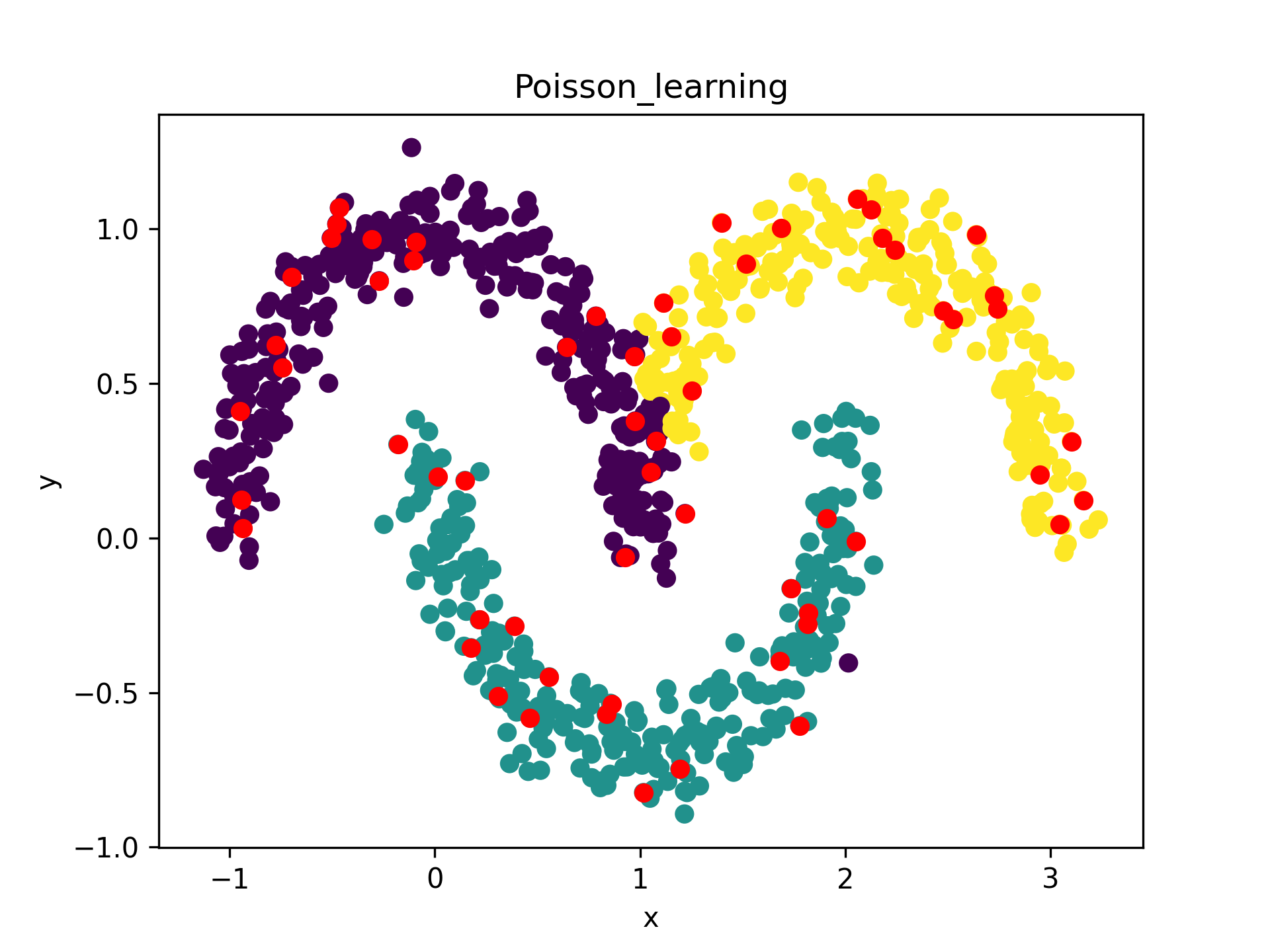}  \includegraphics[width=.32\linewidth,valign=m]{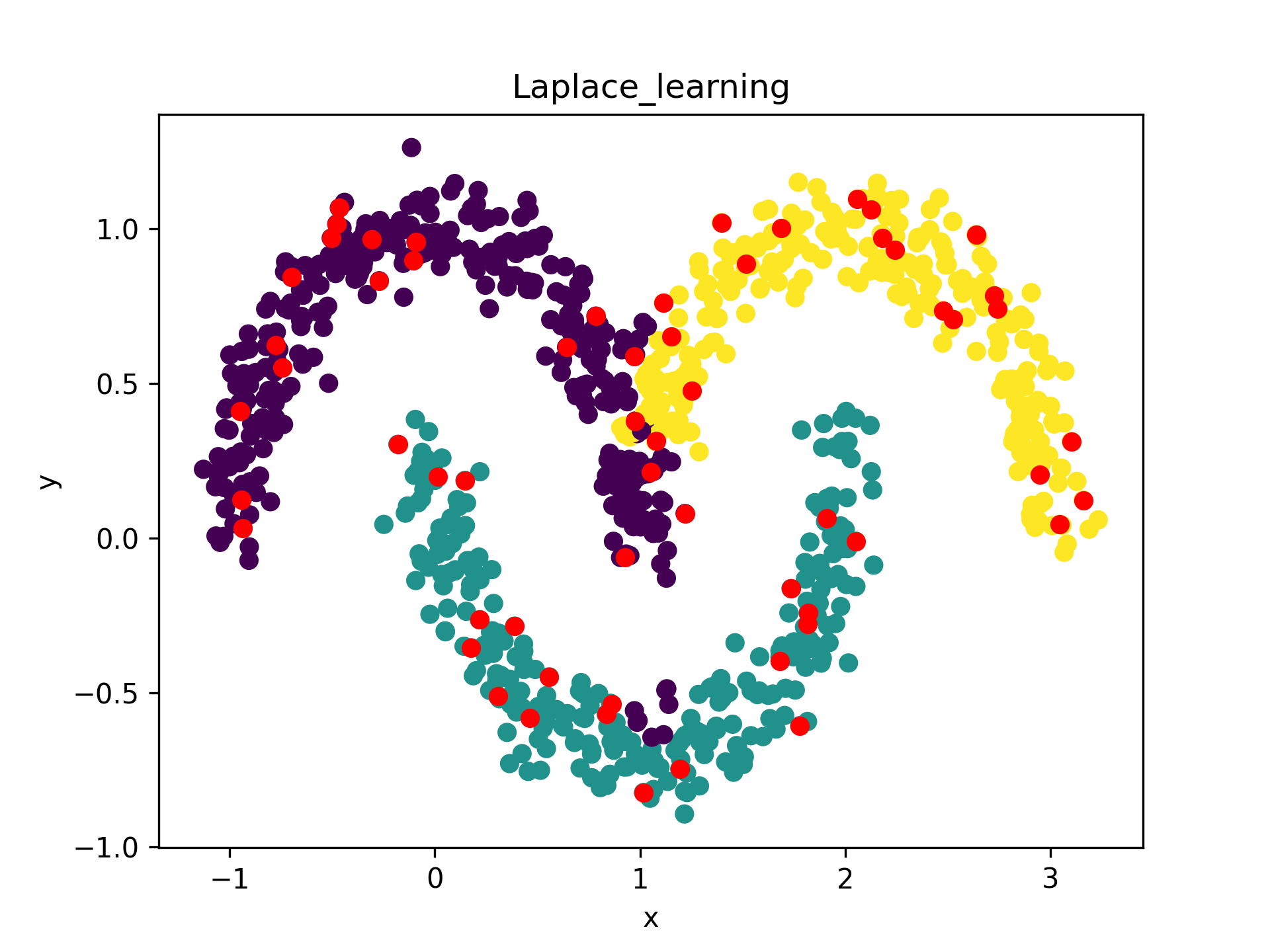}
	\caption{Comparison of Laplace, Poisson and Segregation learning algorithms for 3 classes and initial 20 labels per class.}			
\end{figure}

\begin{figure}[!htb]\label{fig6}
	\includegraphics[width=.32\linewidth,valign=m]{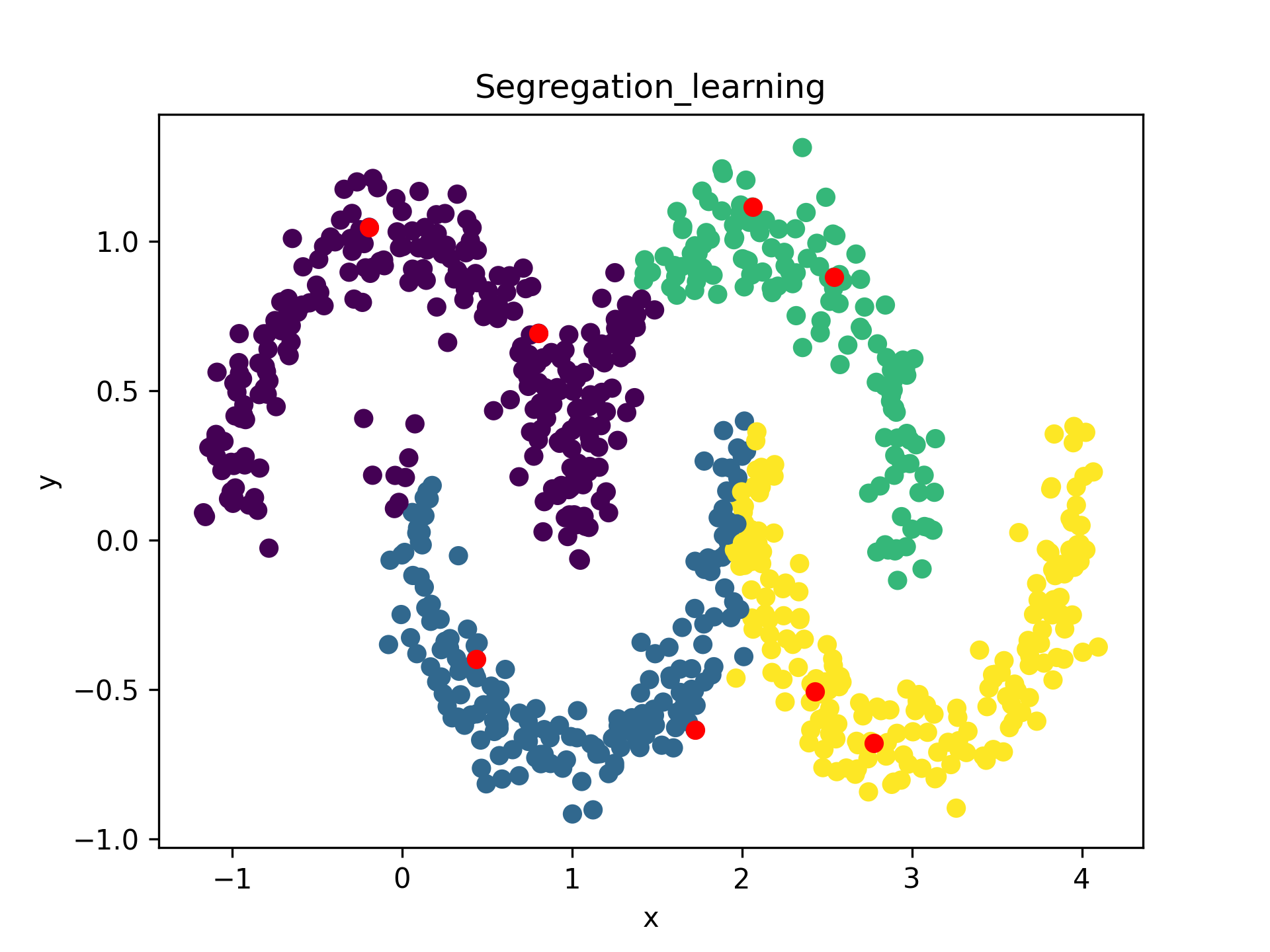}  \includegraphics[width=.32\linewidth,valign=m]{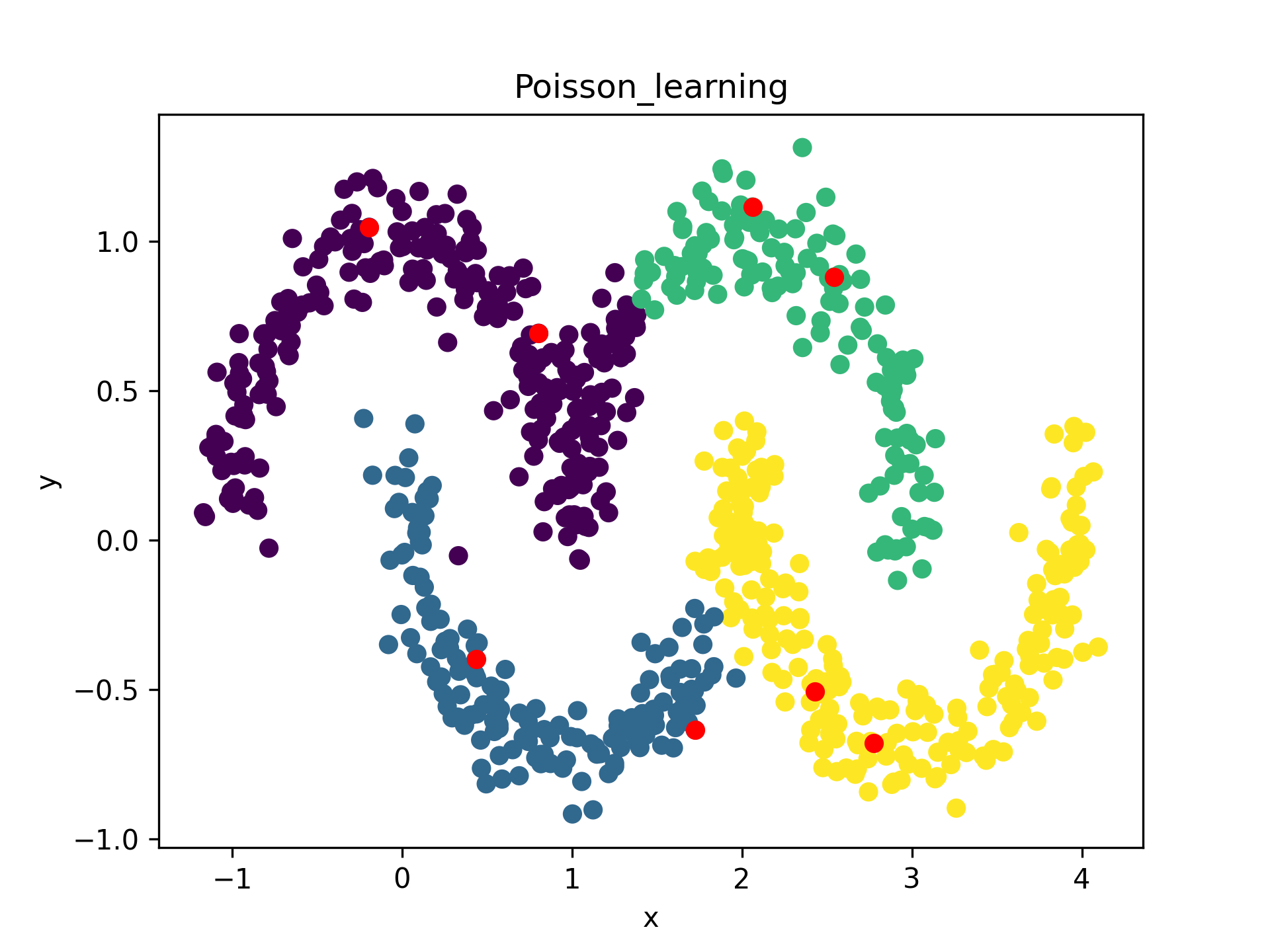}  \includegraphics[width=.32\linewidth,valign=m]{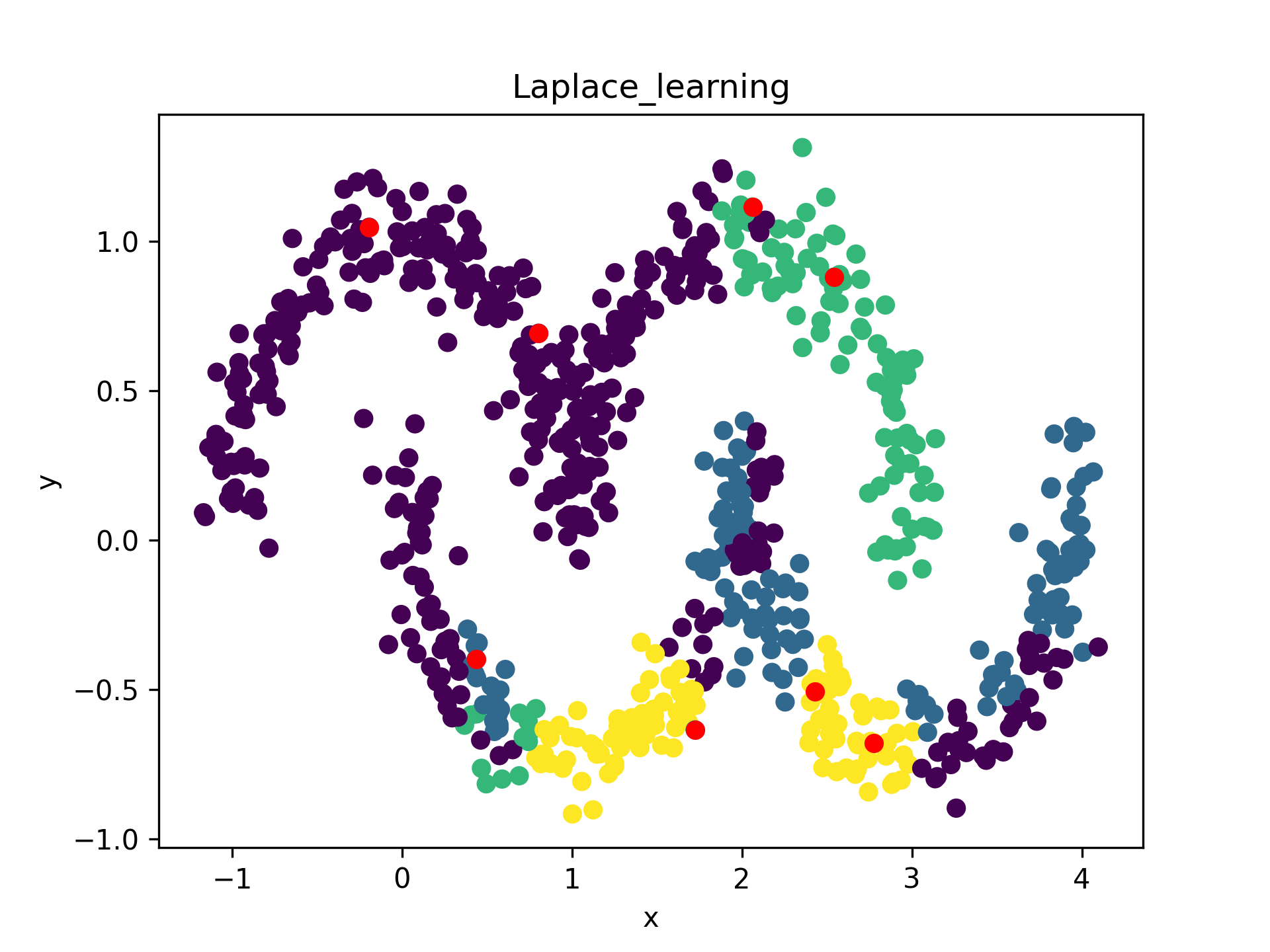}
	\caption{Comparison of Laplace, Poisson and Segregation learning algorithms for 4 classes and  initial 2 labels per class.}			
\end{figure}
\begin{figure}[!htb]\label{fig7}
	\includegraphics[width=.32\linewidth,valign=m]{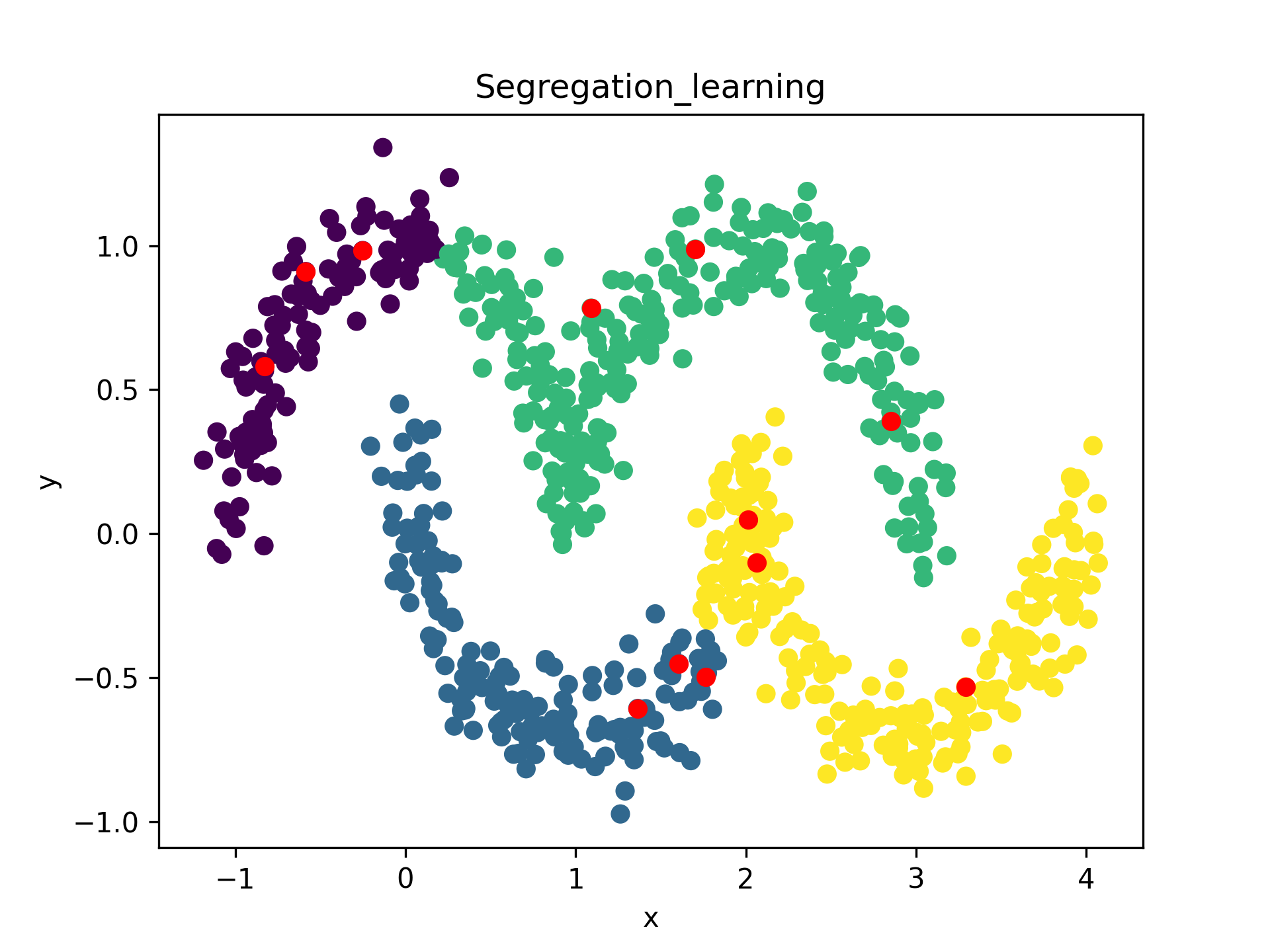}  \includegraphics[width=.32\linewidth,valign=m]{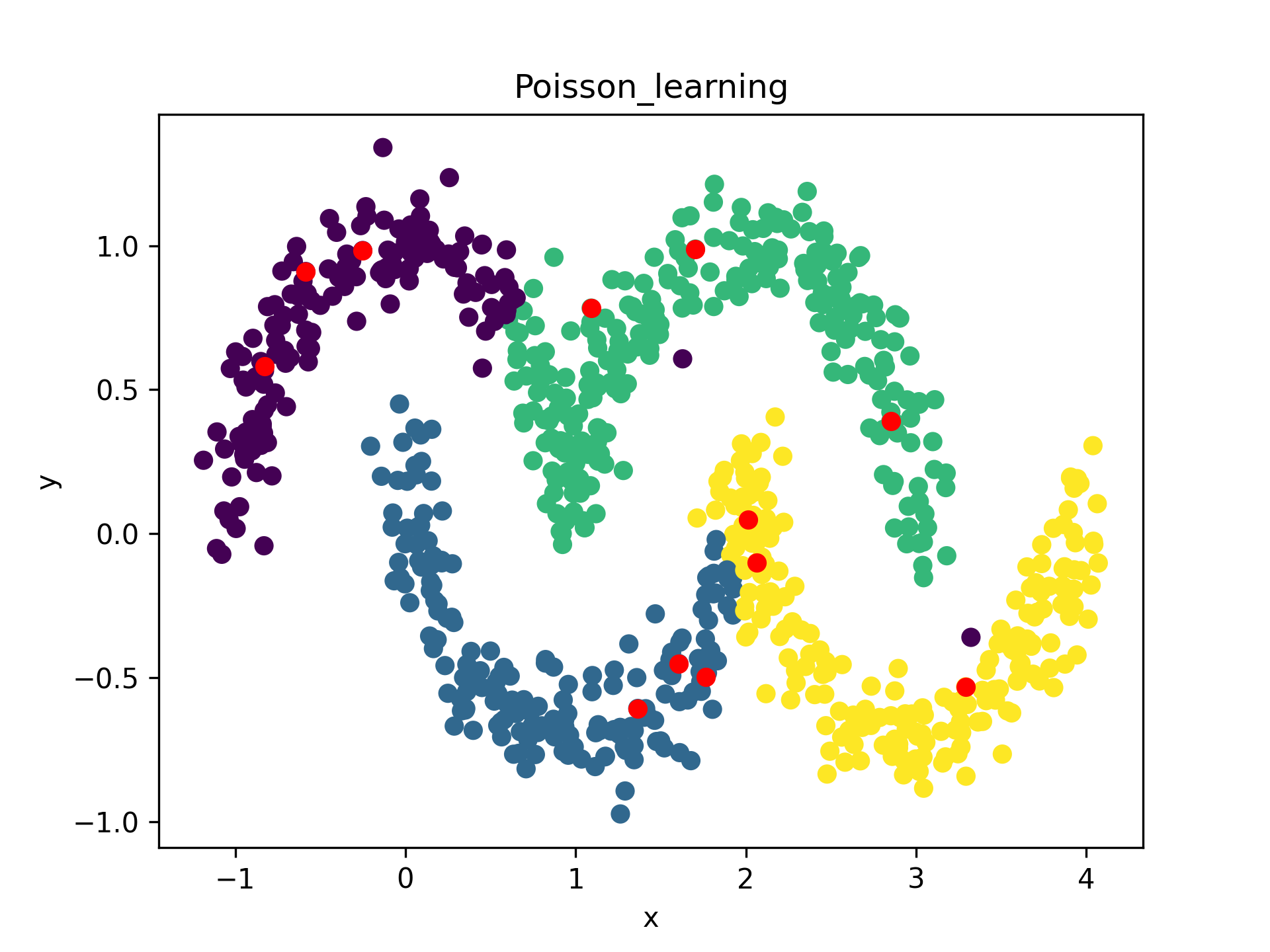}  \includegraphics[width=.32\linewidth,valign=m]{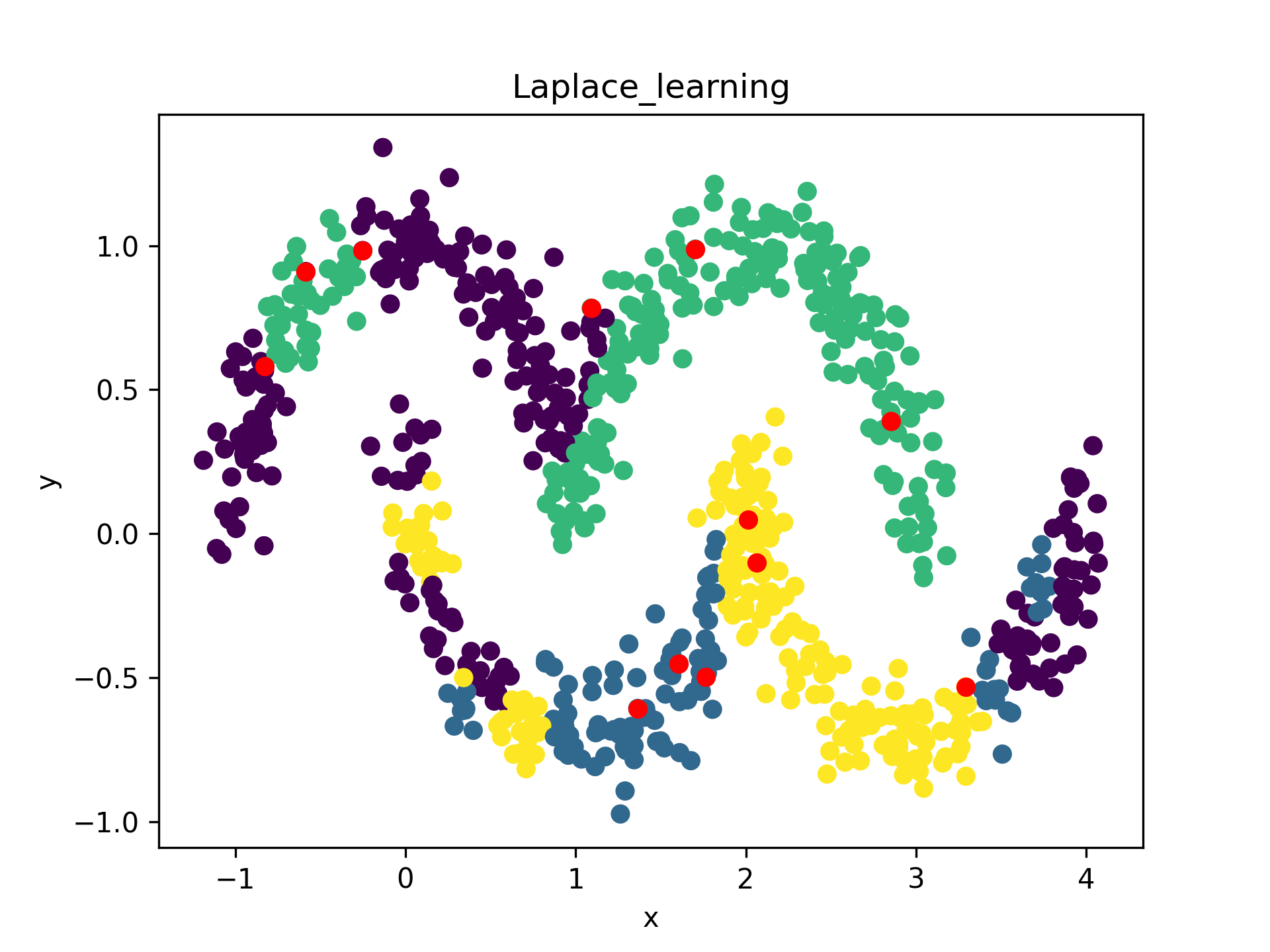}
	\caption{Comparison of Laplace, Poisson and Segregation learning algorithms for 4 classes and  initial 3 labels per class.}			
\end{figure}
\begin{figure}[!htb]\label{fig8}
	\includegraphics[width=.32\linewidth,valign=m]{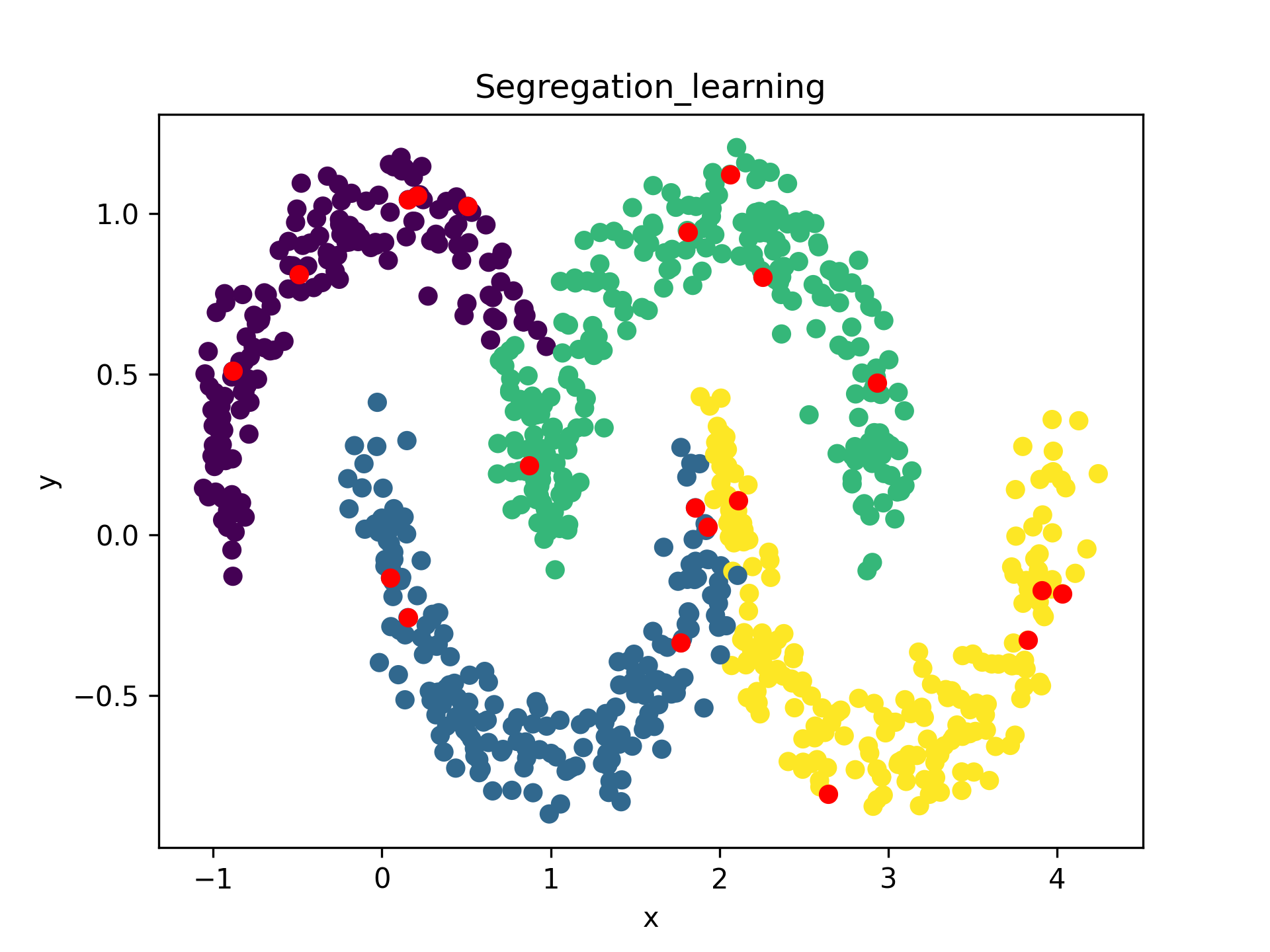}  \includegraphics[width=.32\linewidth,valign=m]{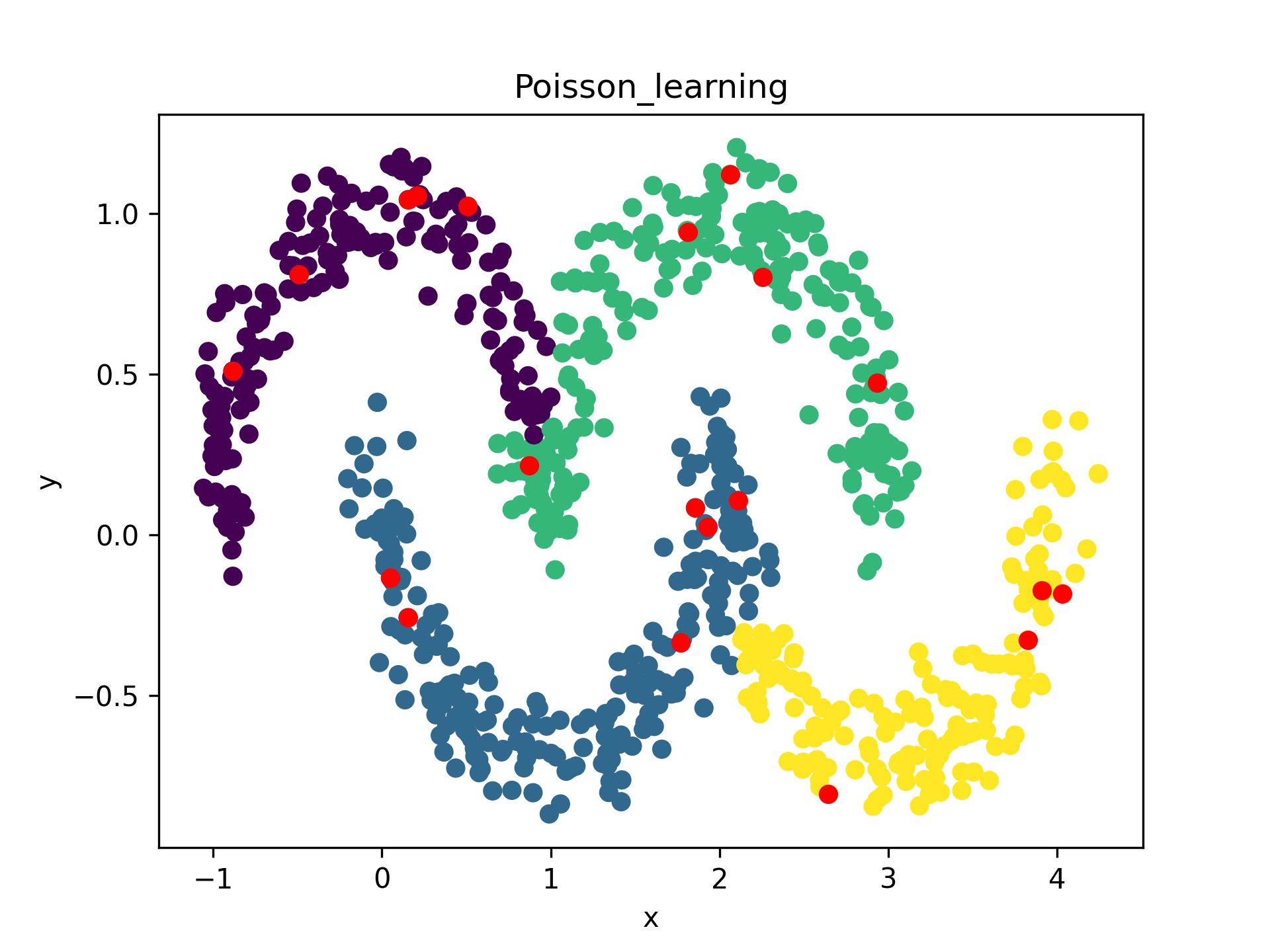}  \includegraphics[width=.32\linewidth,valign=m]{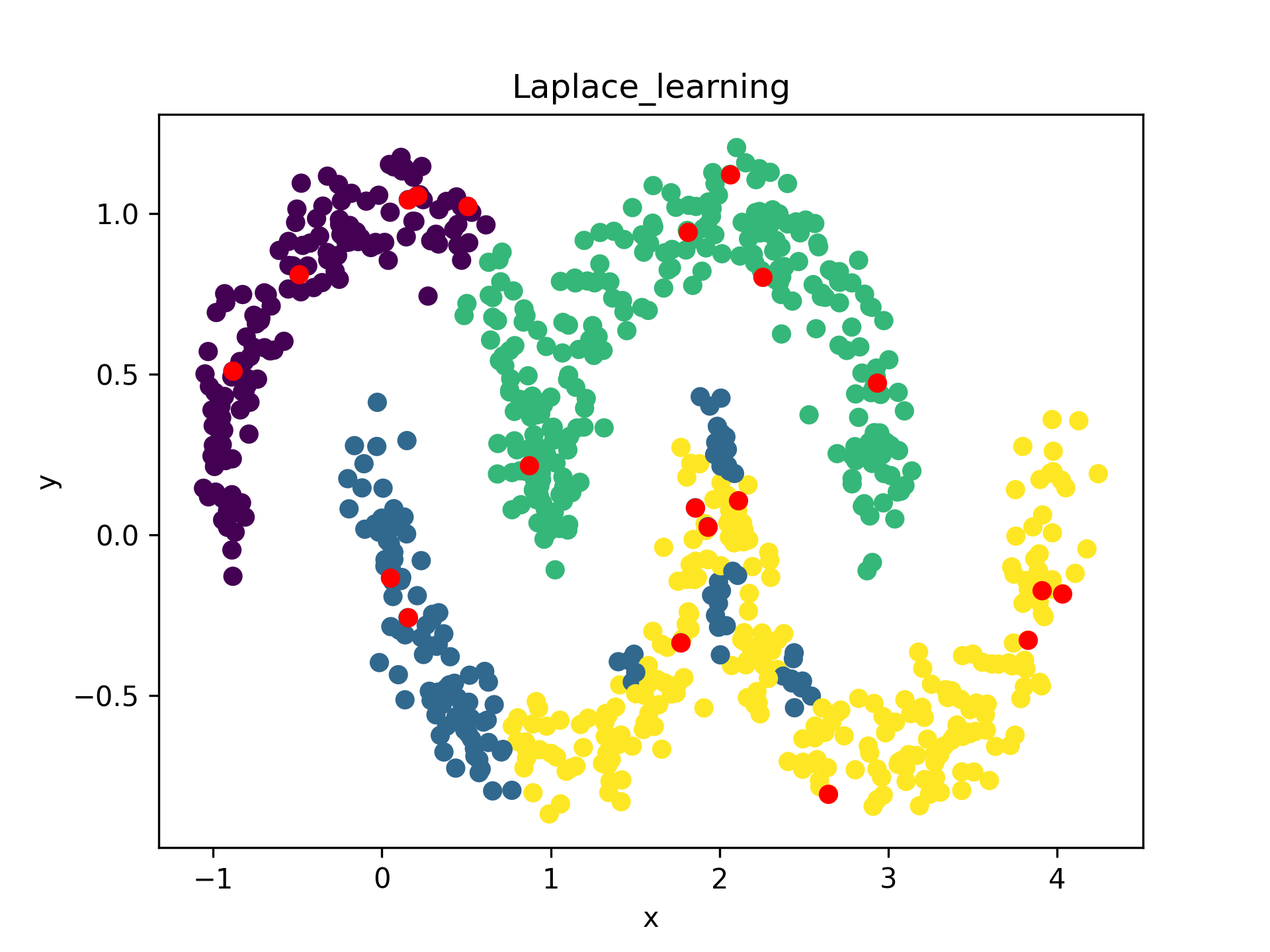}
	\caption{Comparison of Laplace, Poisson and Segregation learning algorithms for 4 classes and  initial 5 labels per class.}			
\end{figure}
\begin{figure}[!htb]\label{fig9}
	\includegraphics[width=.32\linewidth,valign=m]{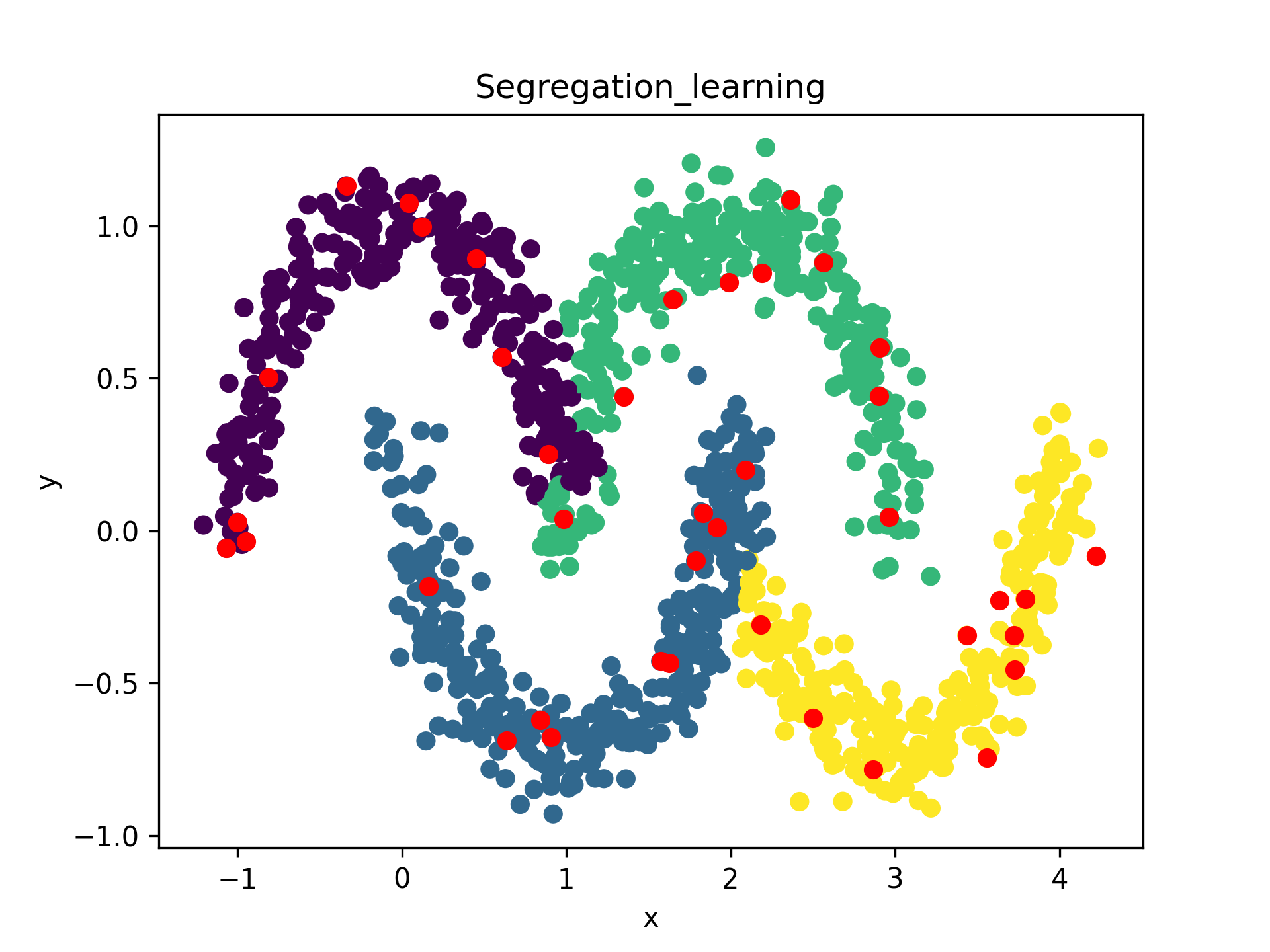}  \includegraphics[width=.32\linewidth,valign=m]{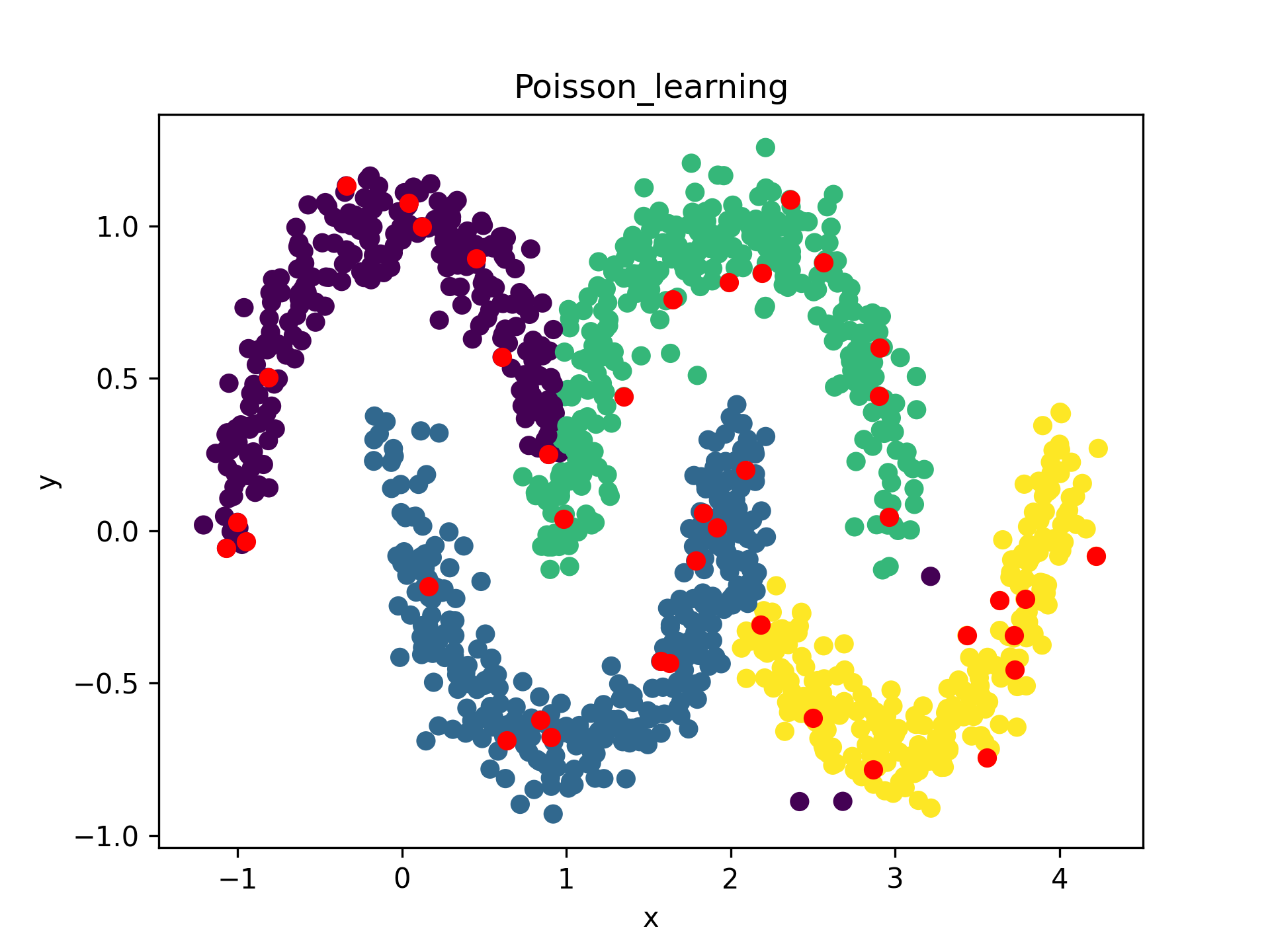}  \includegraphics[width=.32\linewidth,valign=m]{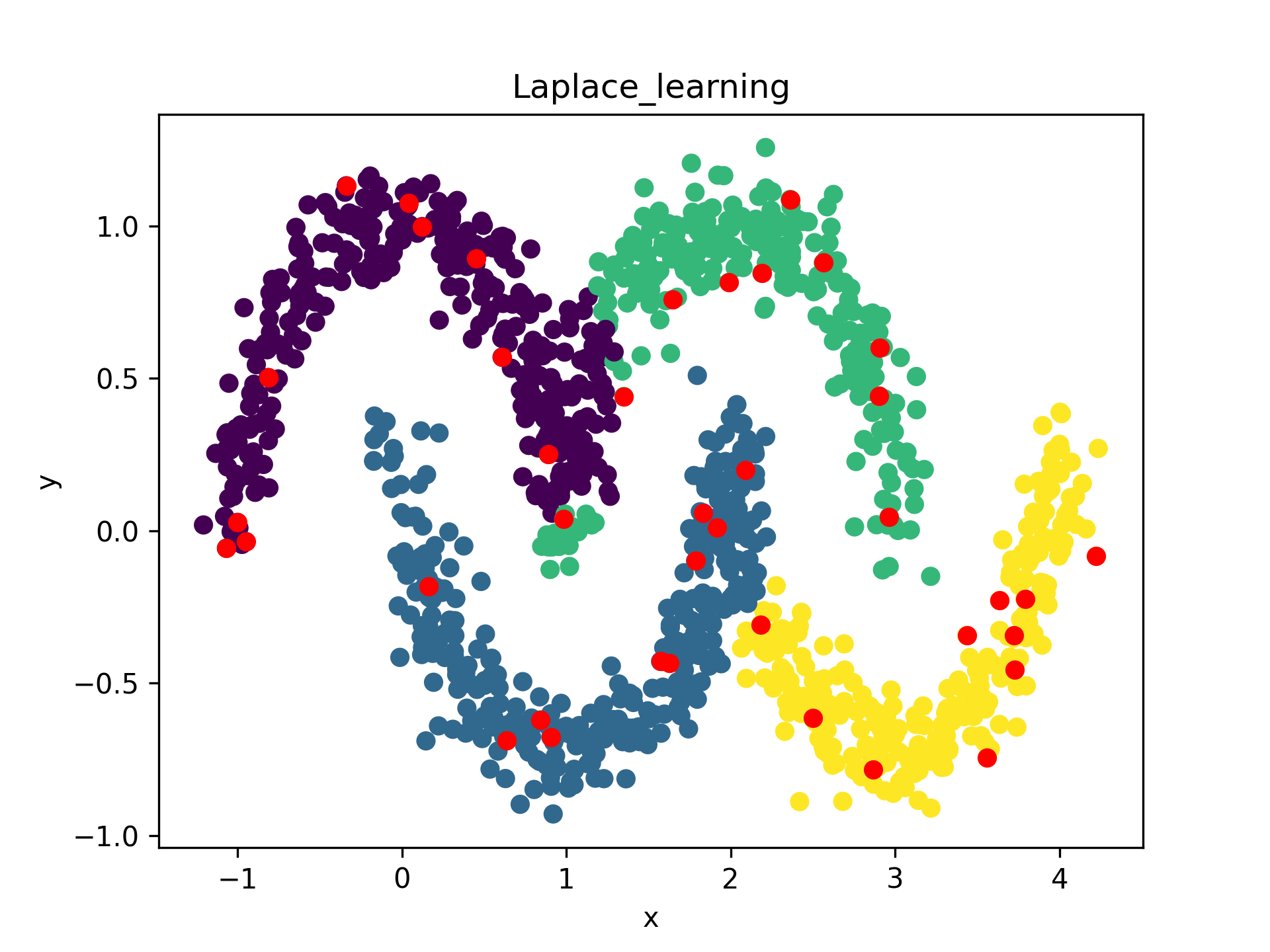}
	\caption{Comparison of Laplace, Poisson and Segregation learning algorithms for 4 classes and  initial 10 labels per class.}			
\end{figure}
\begin{figure}[!htb]\label{fig10}
	\includegraphics[width=.32\linewidth,valign=m]{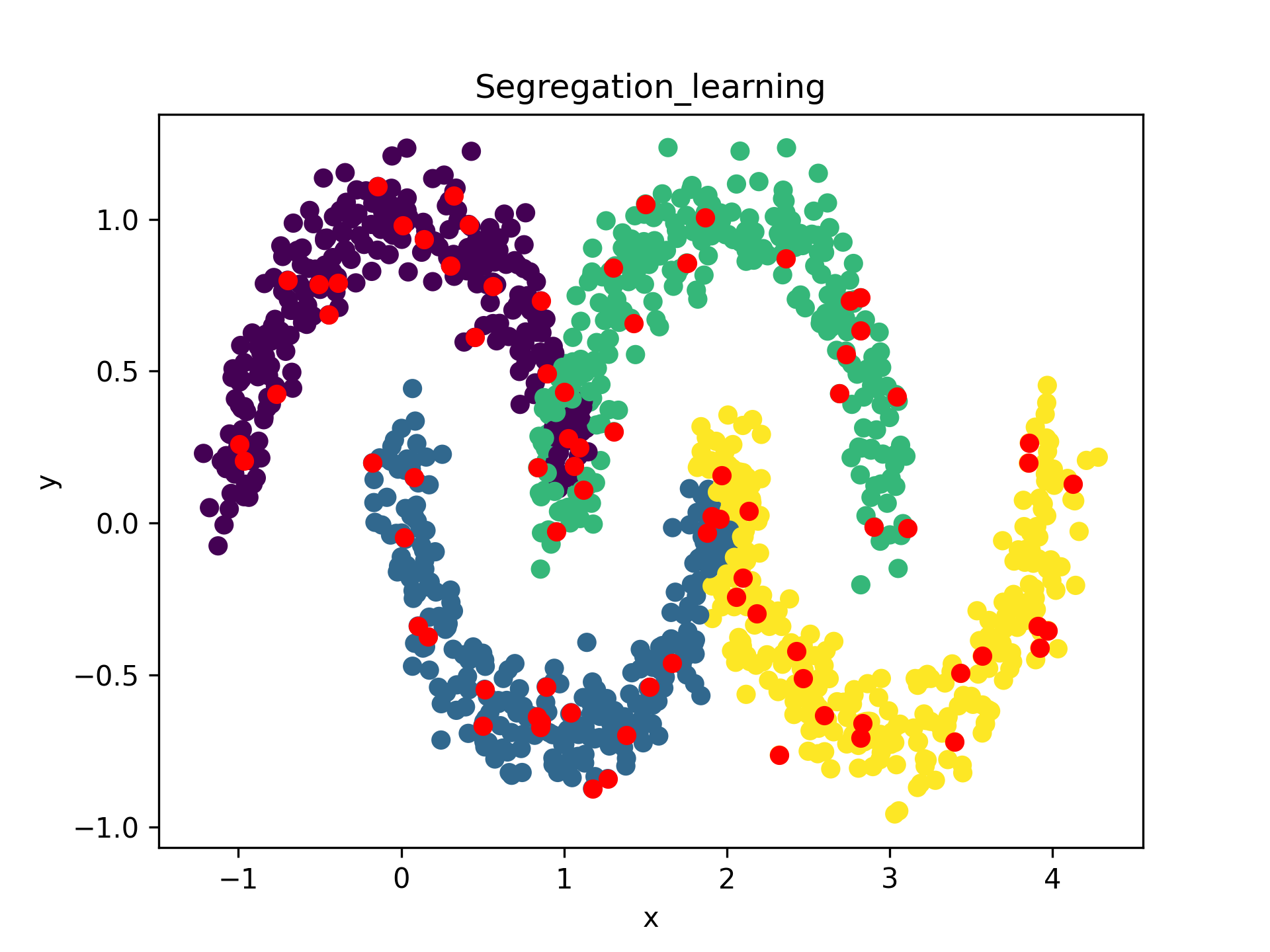}  \includegraphics[width=.32\linewidth,valign=m]{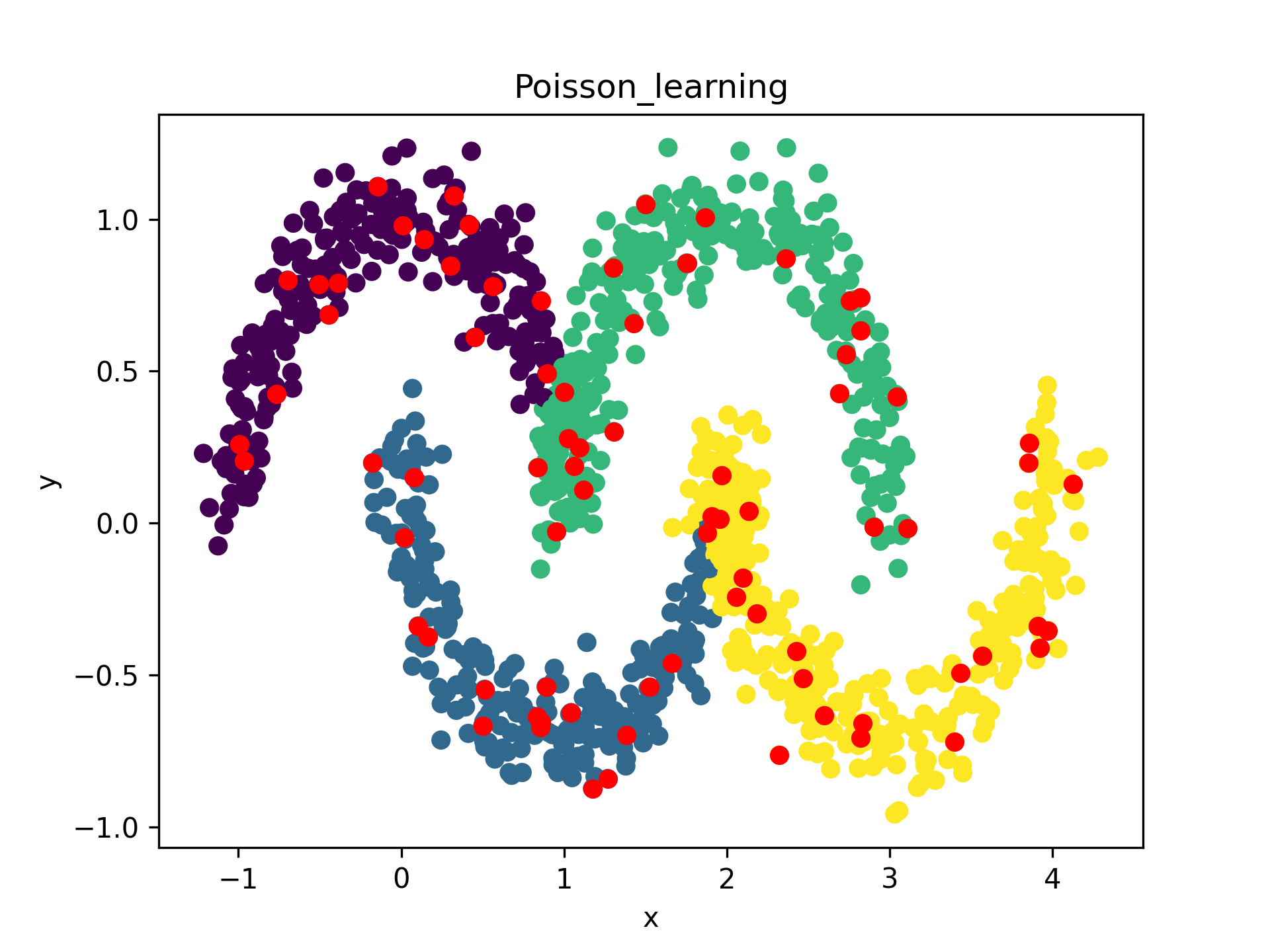}  \includegraphics[width=.32\linewidth,valign=m]{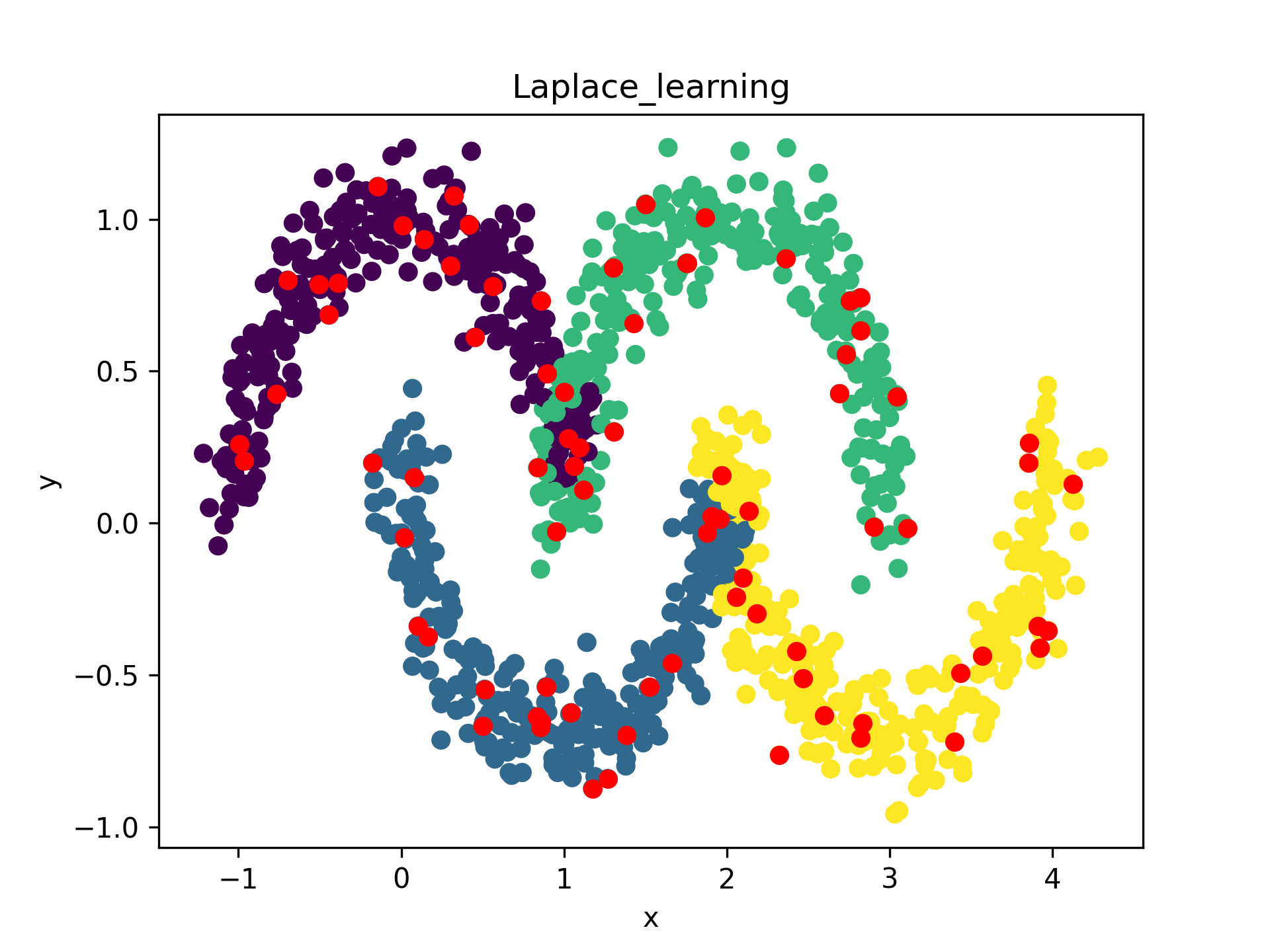}
	\caption{Comparison of Laplace, Poisson and Segregation learning algorithms for 4 classes and  initial 20 labels per class.}			
\end{figure}

\begin{figure}[!htb]\label{fig11}
	\includegraphics[width=.32\linewidth,valign=m]{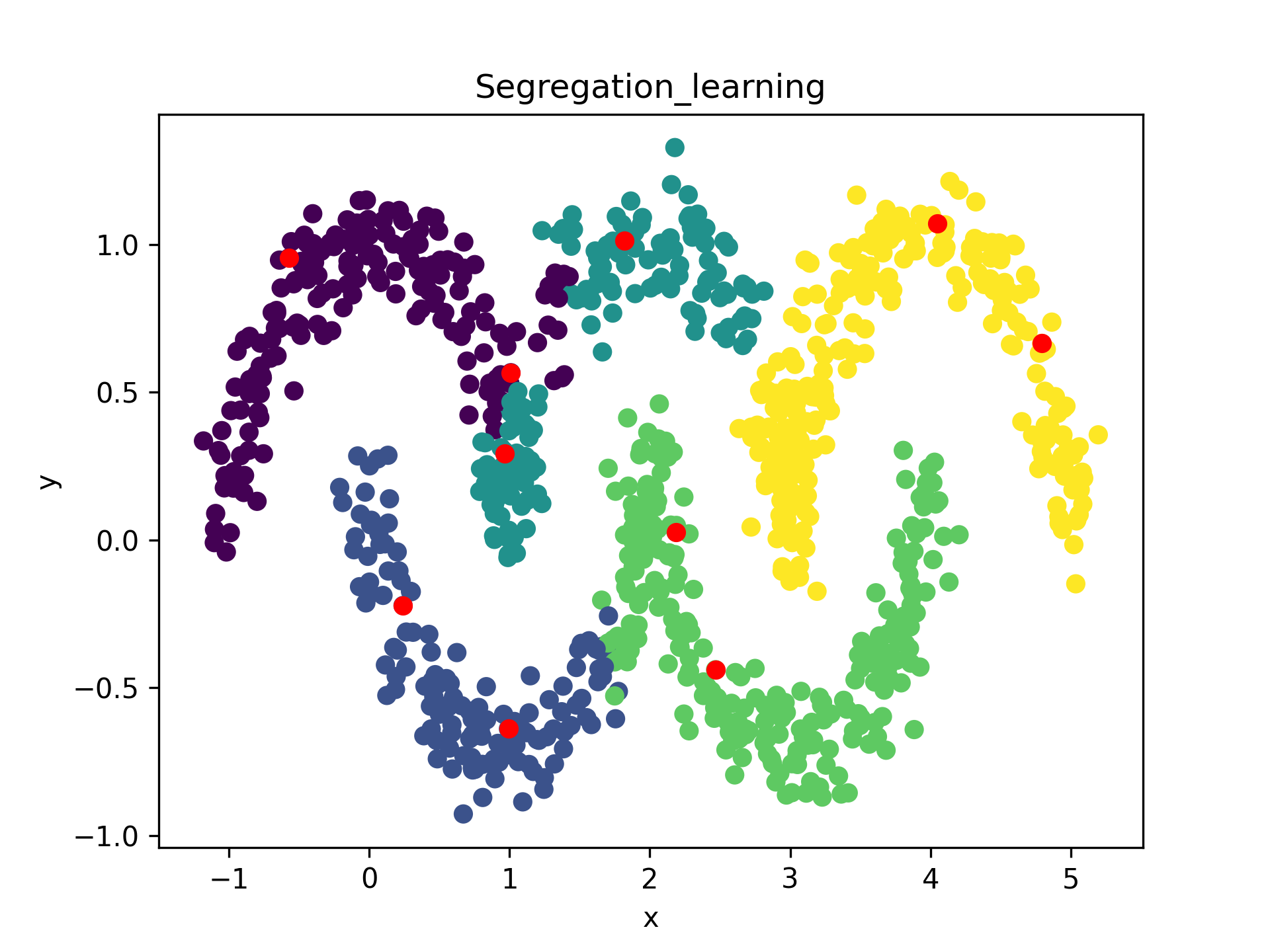}  \includegraphics[width=.32\linewidth,valign=m]{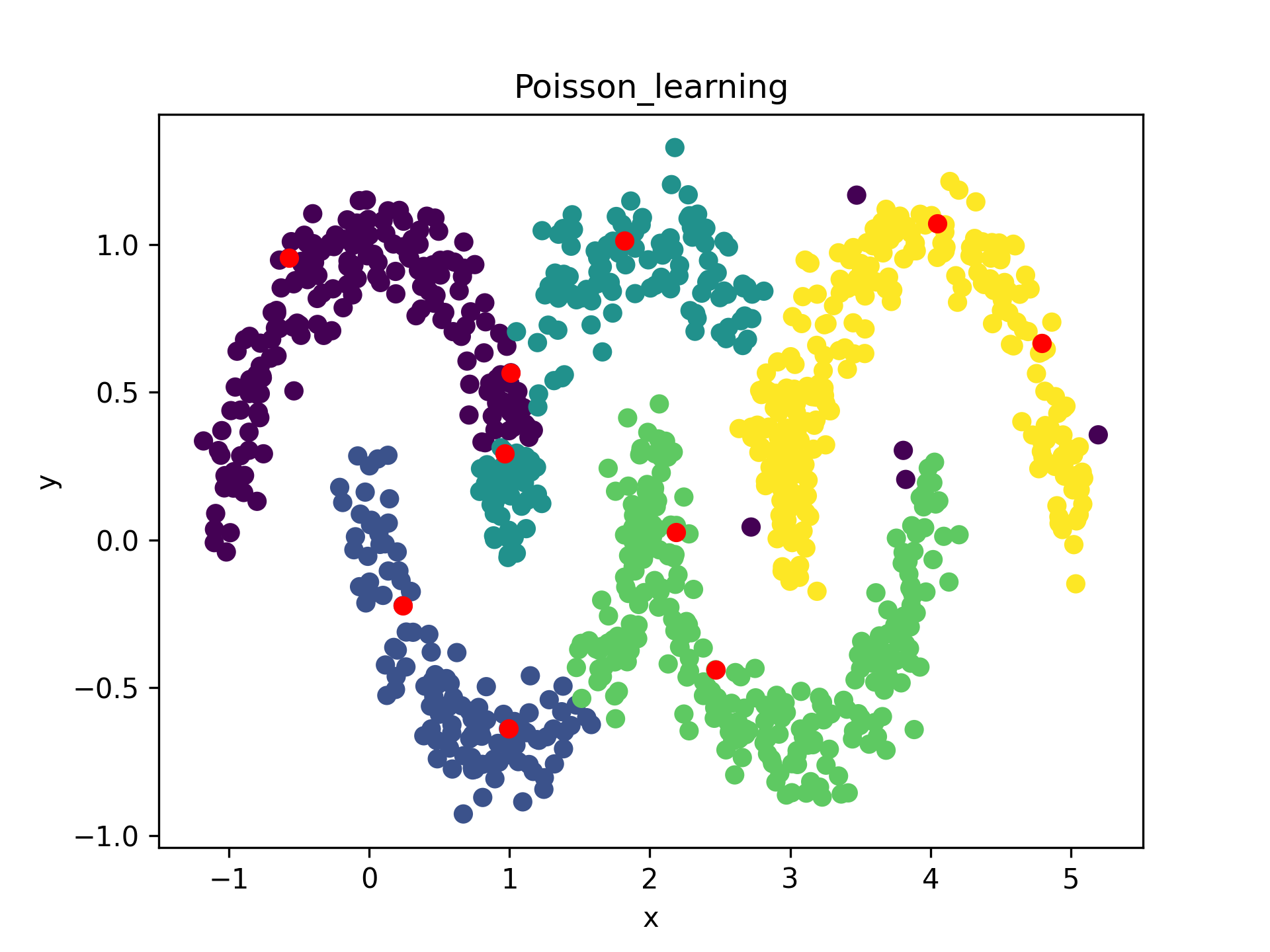}  \includegraphics[width=.32\linewidth,valign=m]{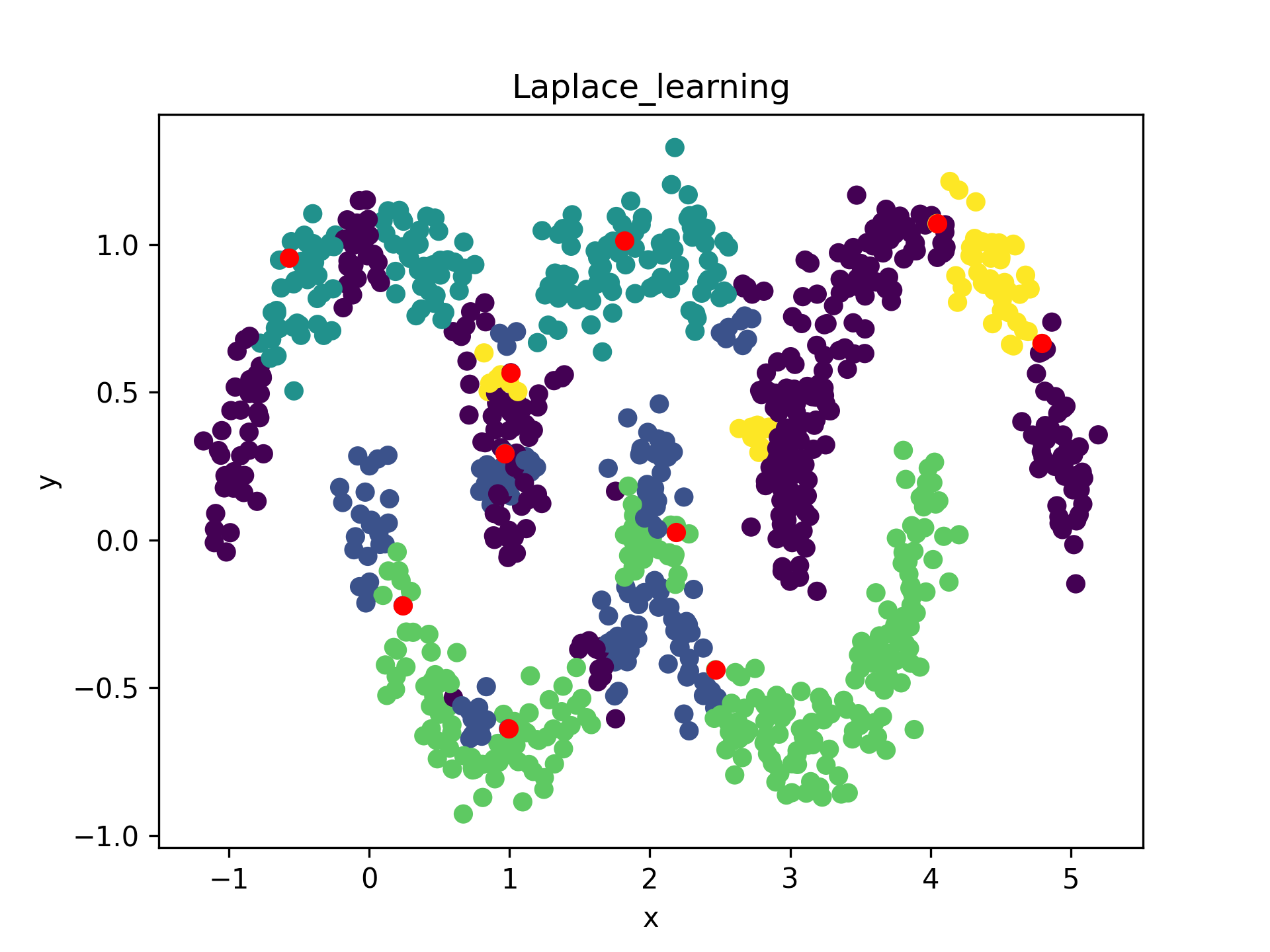}
	\caption{Comparison of Laplace, Poisson and Segregation learning algorithms for 5 classes and  initial 2 labels per class.}			
\end{figure}
\begin{figure}[!htb]\label{fig12}
	\includegraphics[width=.32\linewidth,valign=m]{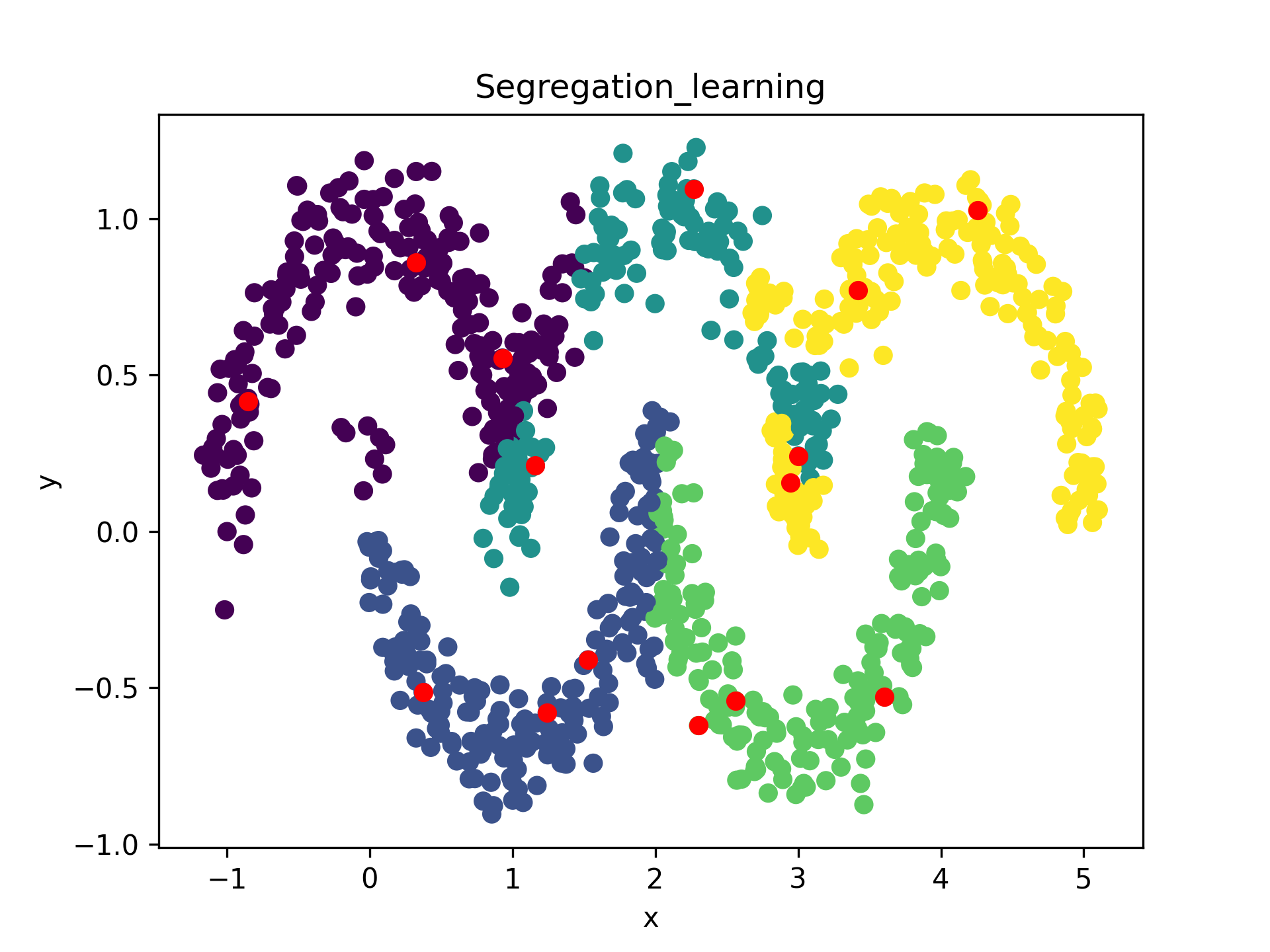}  \includegraphics[width=.32\linewidth,valign=m]{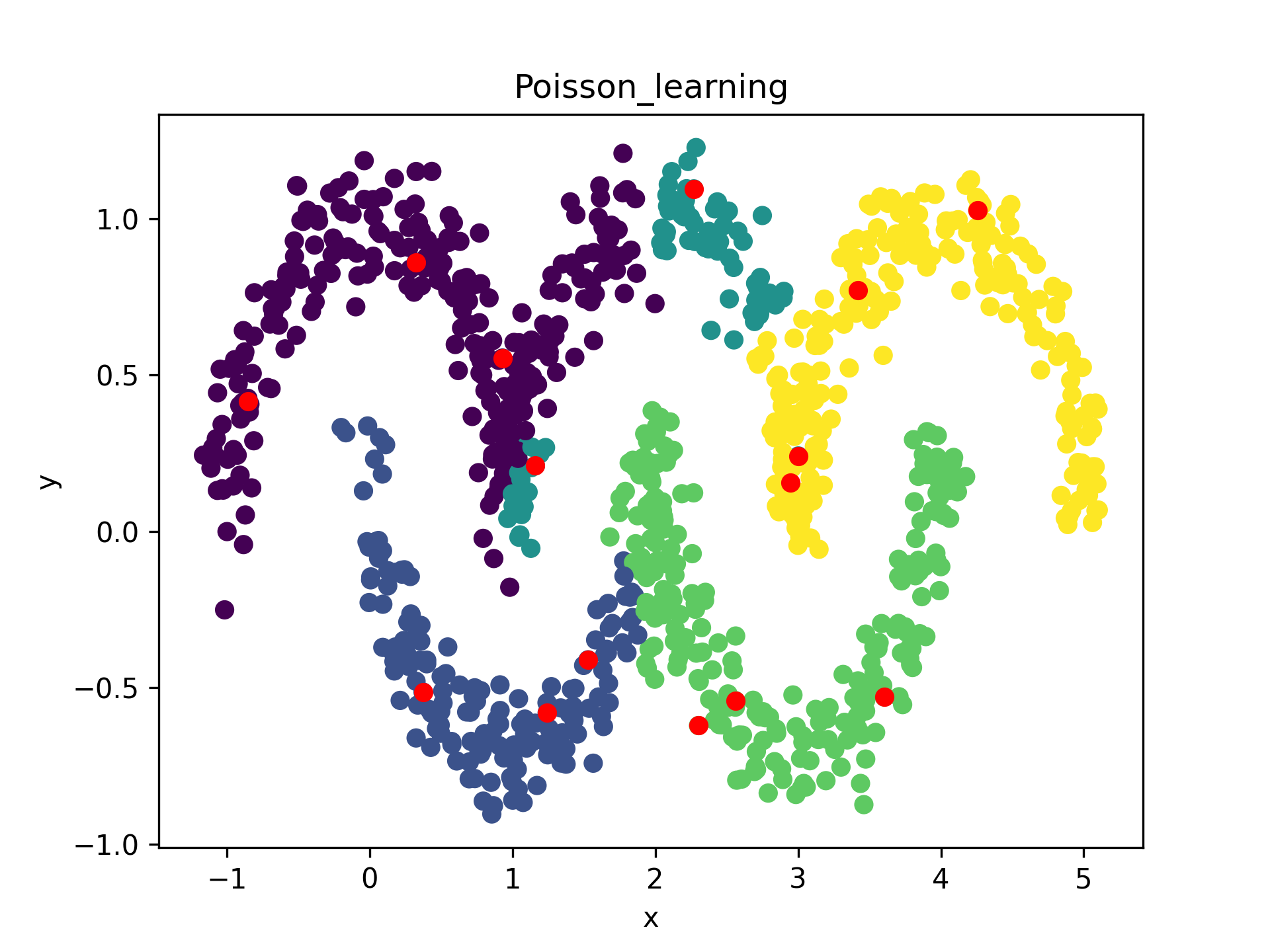}  \includegraphics[width=.32\linewidth,valign=m]{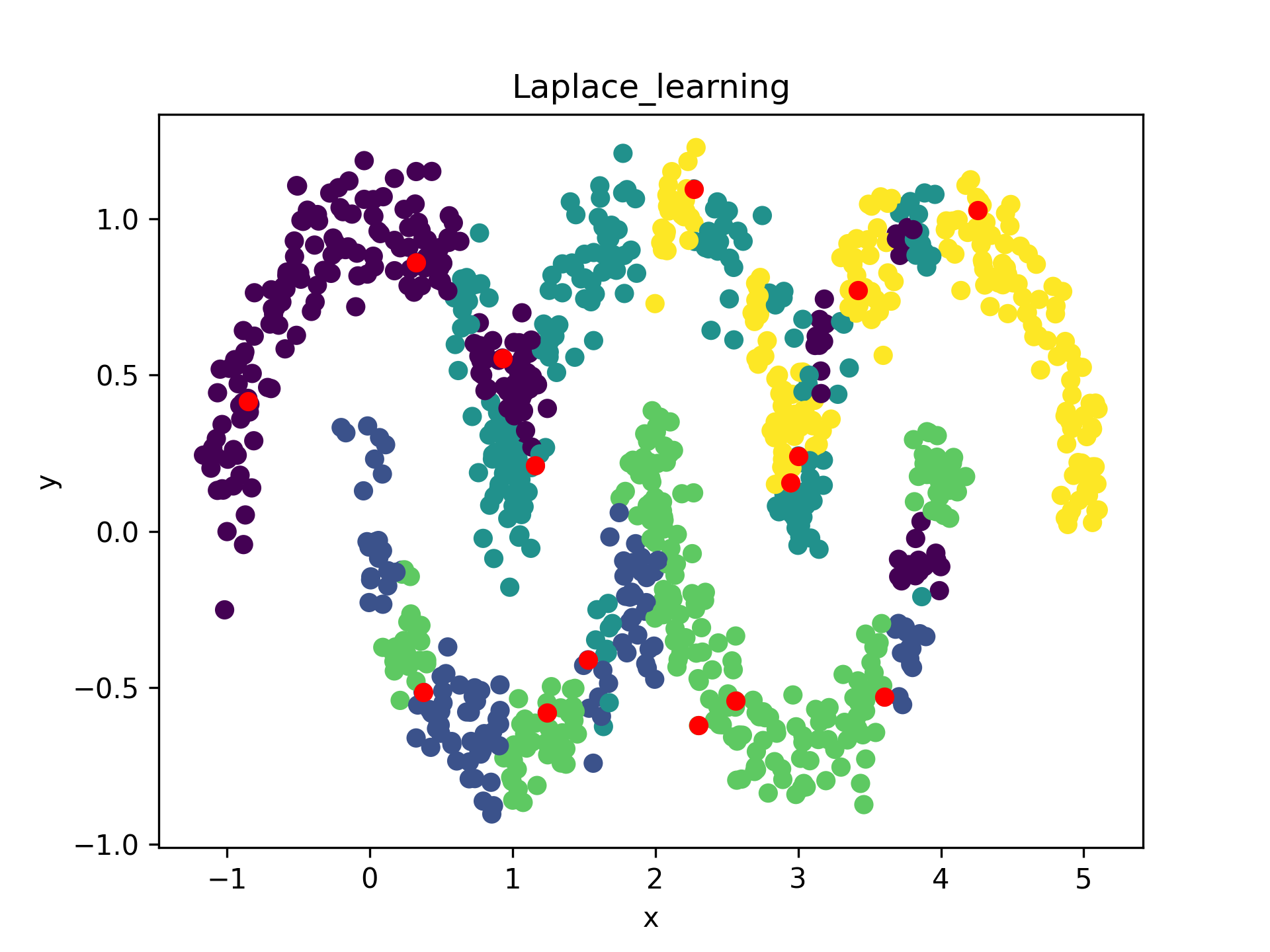}
	\caption{Comparison of Laplace, Poisson and Segregation learning algorithms for 5 classes and  initial 3 labels per class.}			
\end{figure}

\begin{figure}[!htb]\label{fig13}
	\includegraphics[width=.32\linewidth,valign=m]{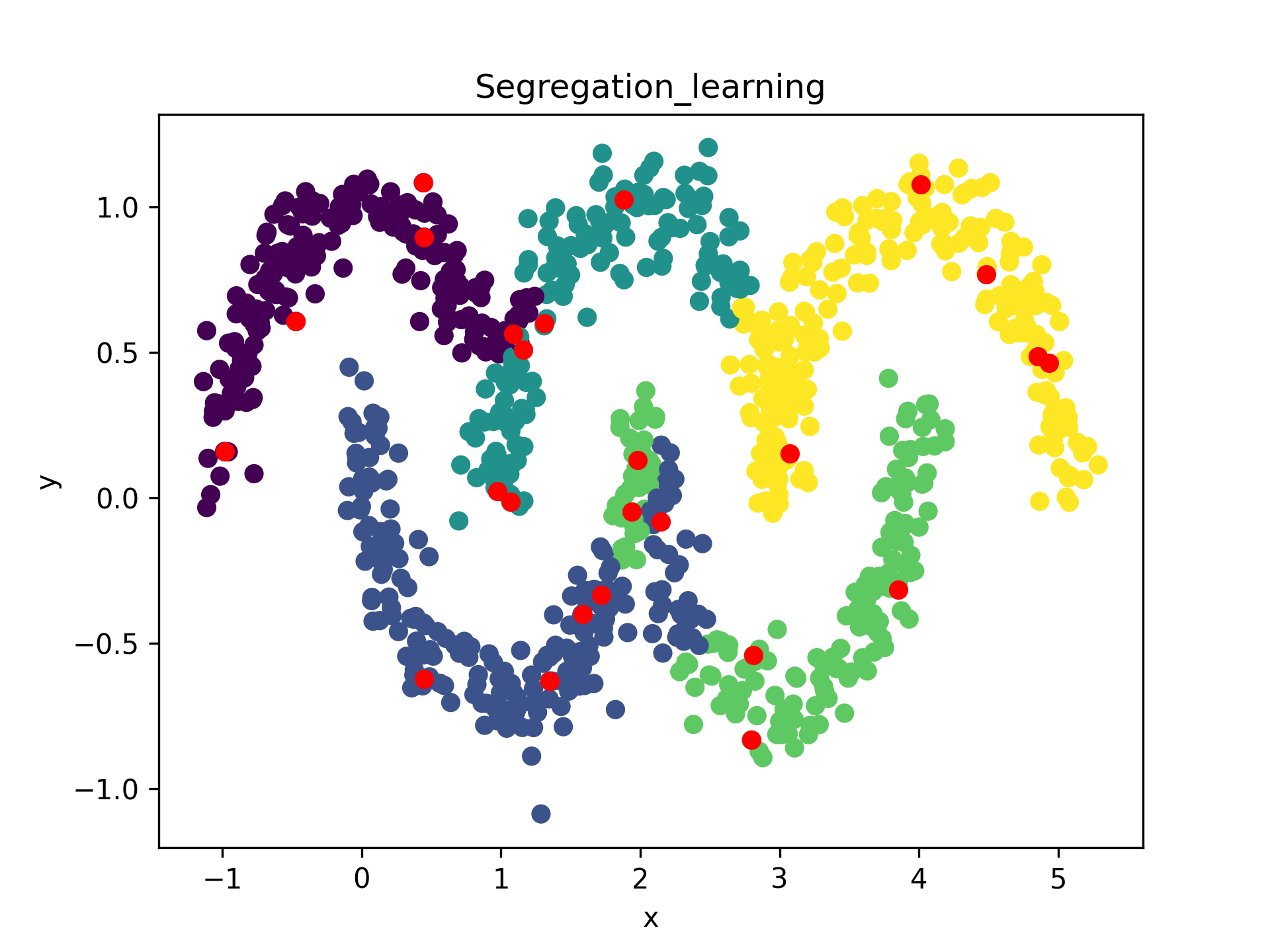}  \includegraphics[width=.32\linewidth,valign=m]{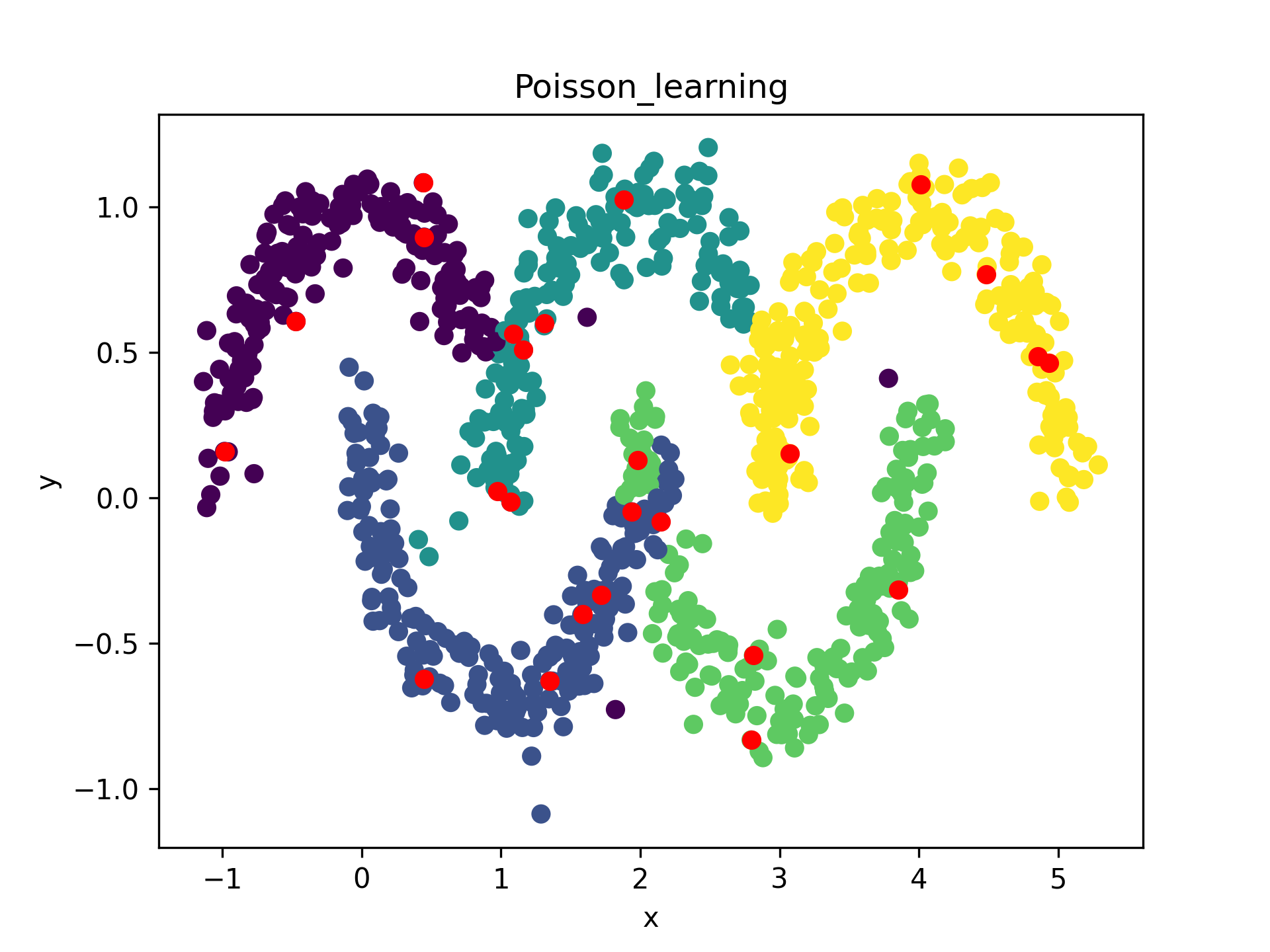}  \includegraphics[width=.32\linewidth,valign=m]{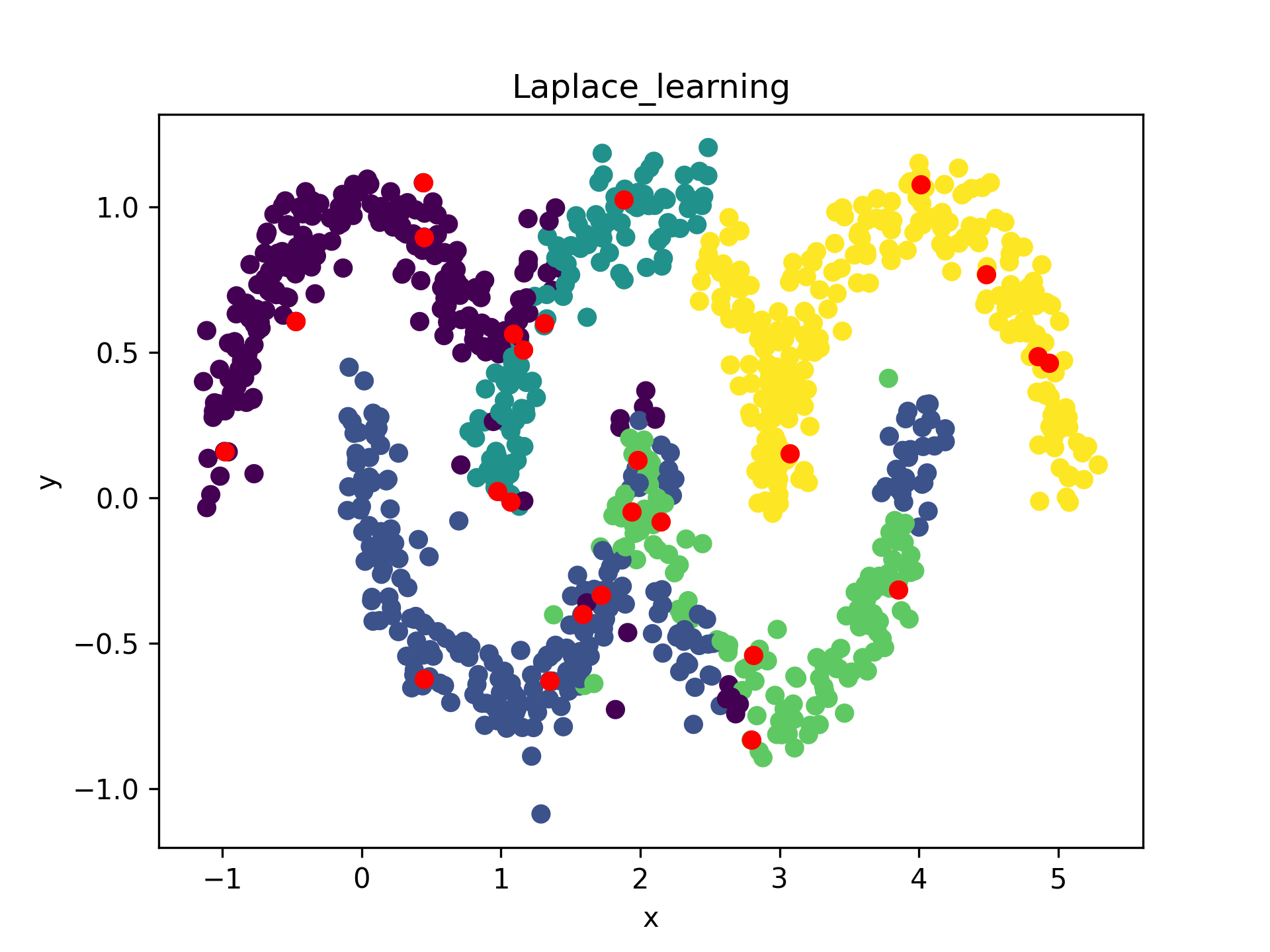}
	\caption{Comparison of Laplace, Poisson and Segregation learning algorithms for 5 classes and  initial 5 labels per class.}			
\end{figure}

\begin{figure}[!htb]
	\includegraphics[width=.32\linewidth,valign=m]{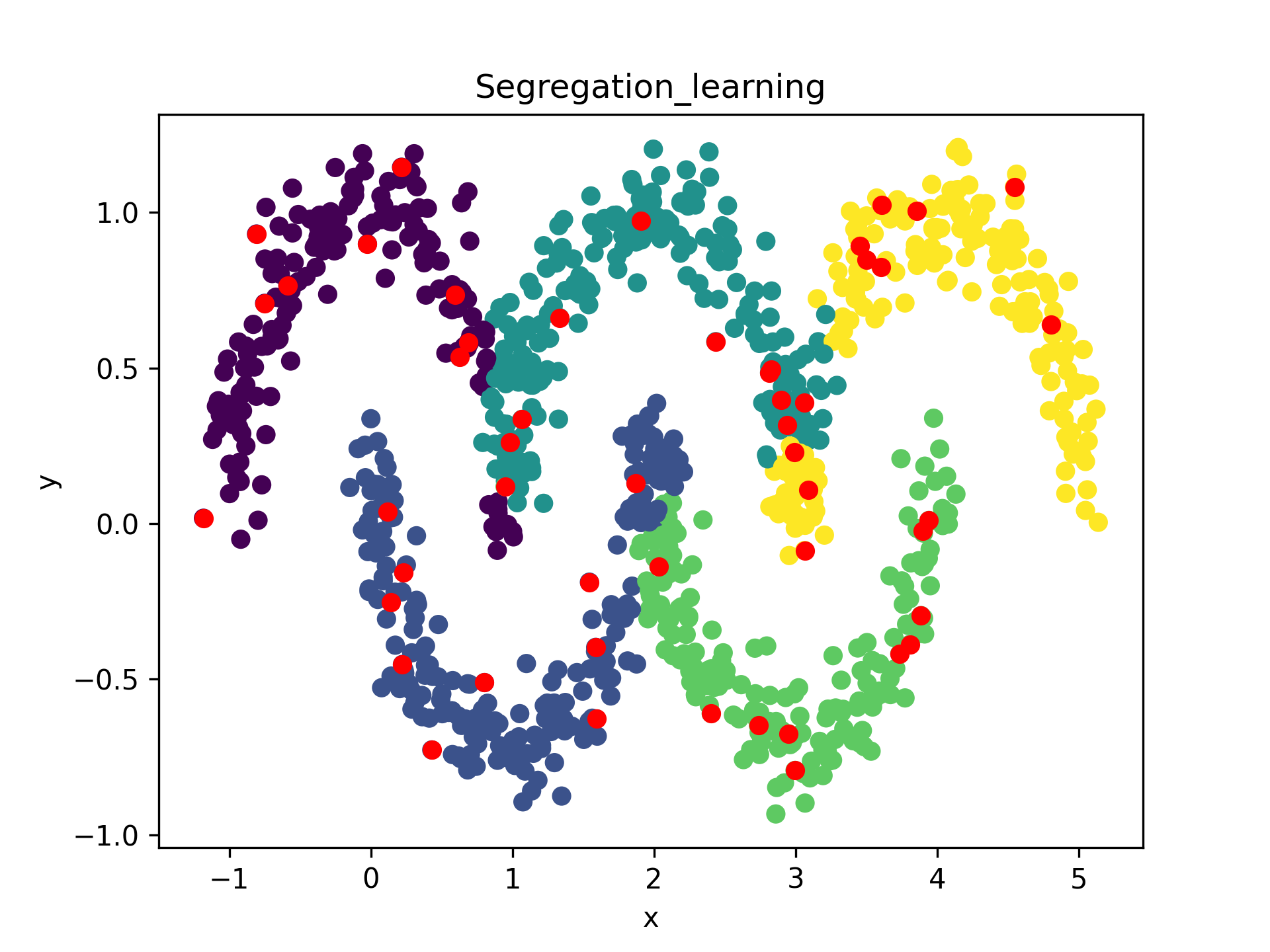}  \includegraphics[width=.32\linewidth,valign=m]{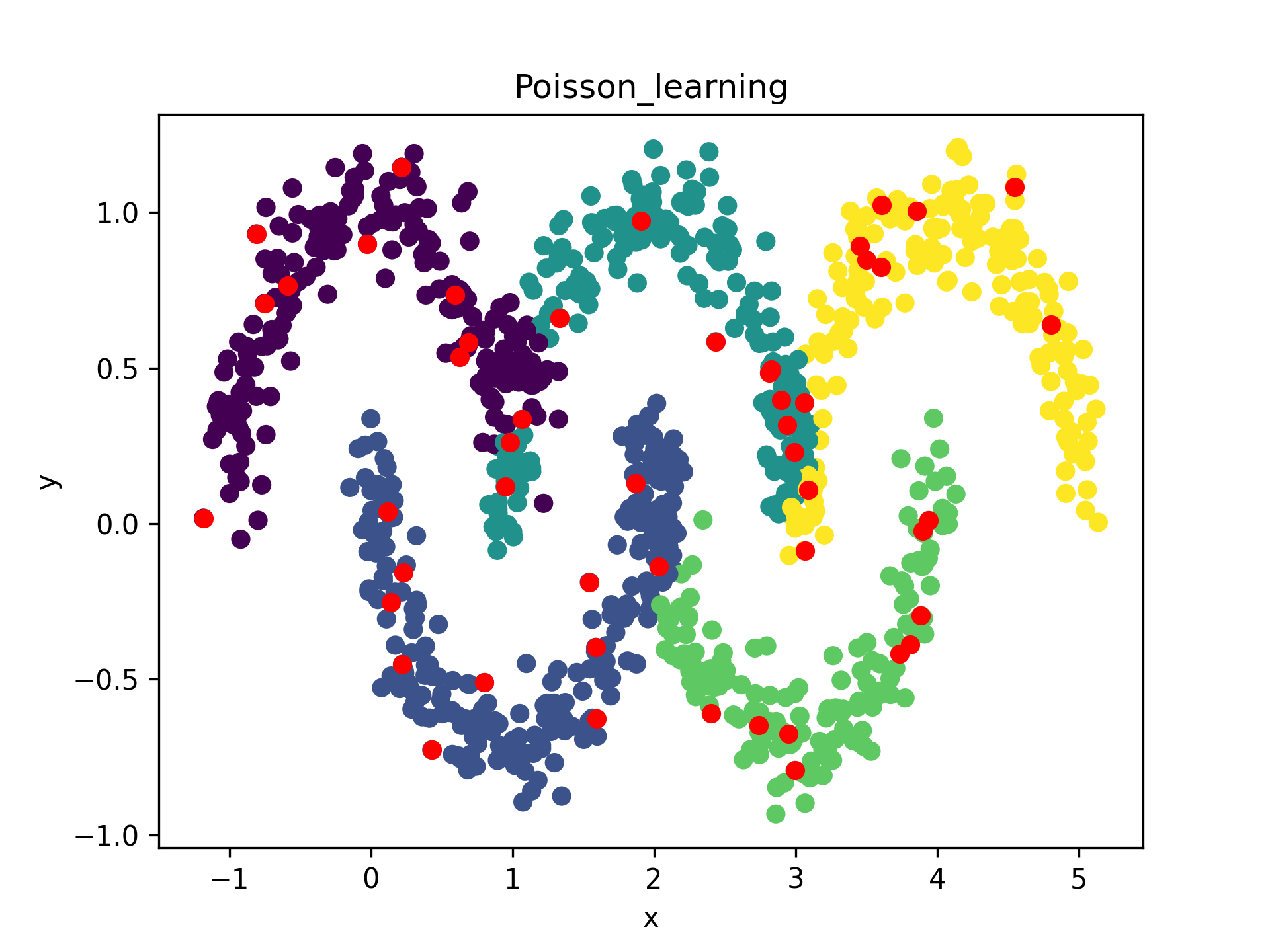}  \includegraphics[width=.32\linewidth,valign=m]{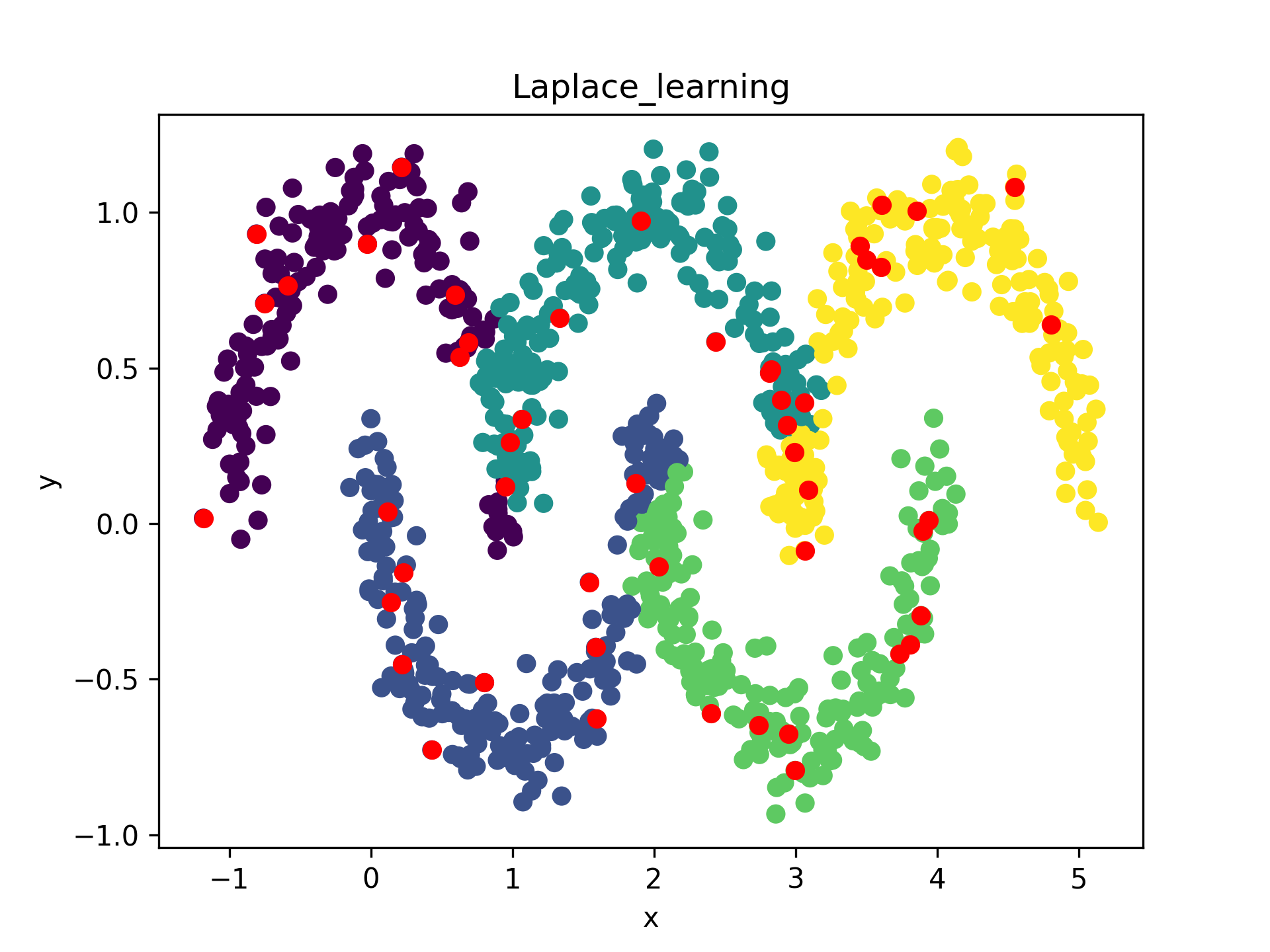}
	\caption{Comparison of Laplace, Poisson and Segregation learning algorithms for 5 classes and  initial 10 labels per class.}	
	\label{fig14}		
\end{figure}

\begin{figure}[!htb]
	\includegraphics[width=.32\linewidth,valign=m]{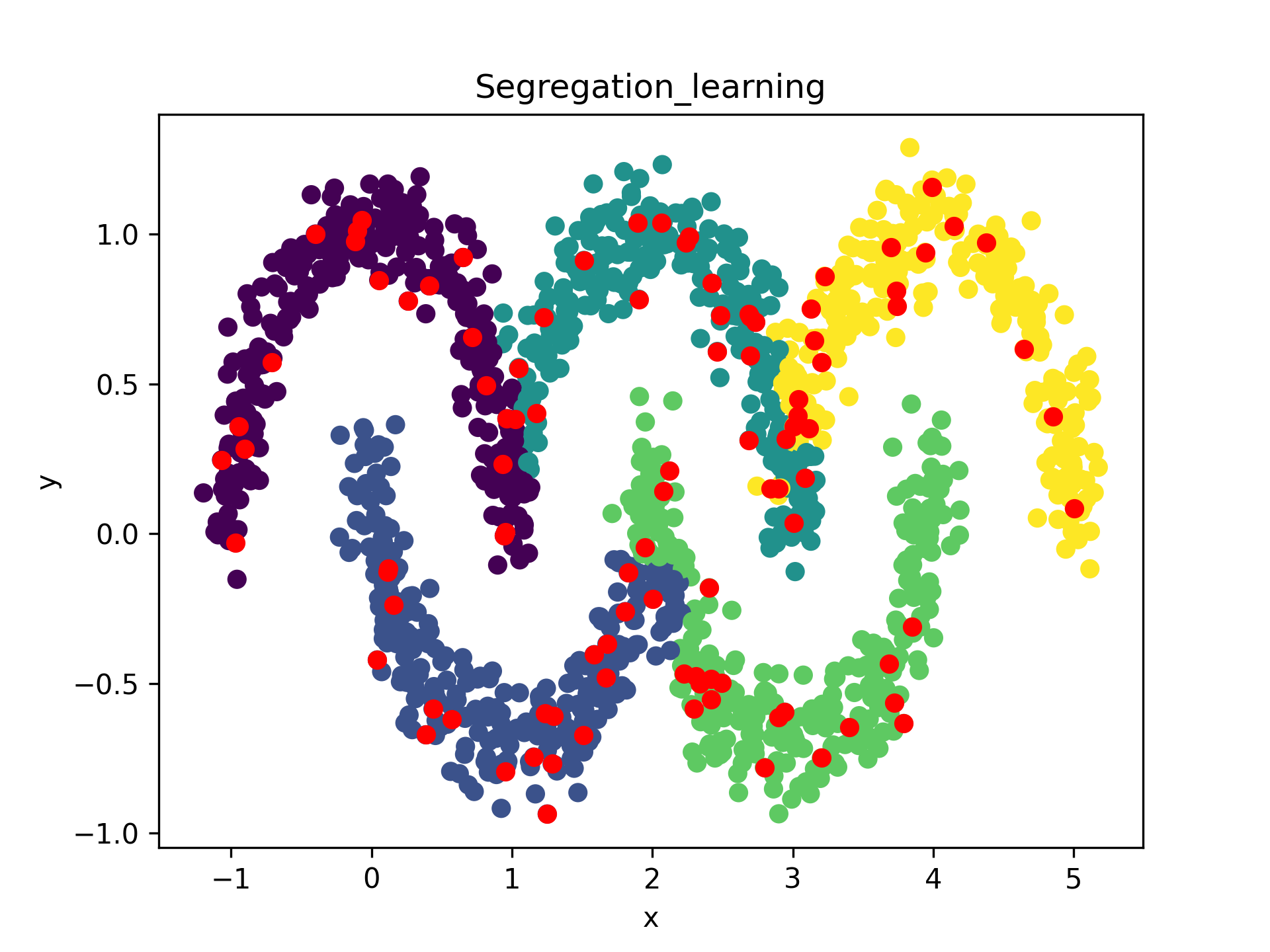}  \includegraphics[width=.32\linewidth,valign=m]{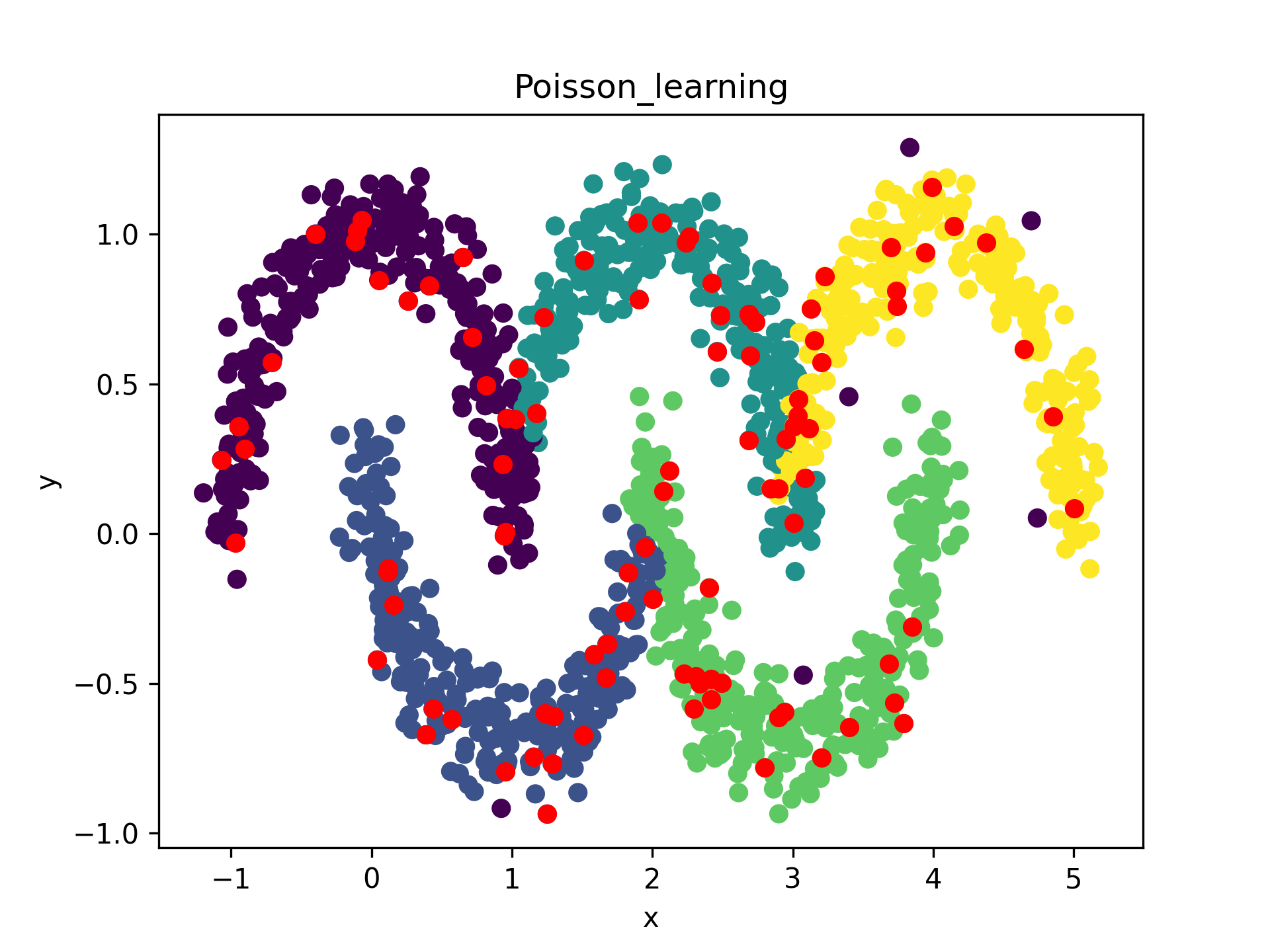}  \includegraphics[width=.32\linewidth,valign=m]{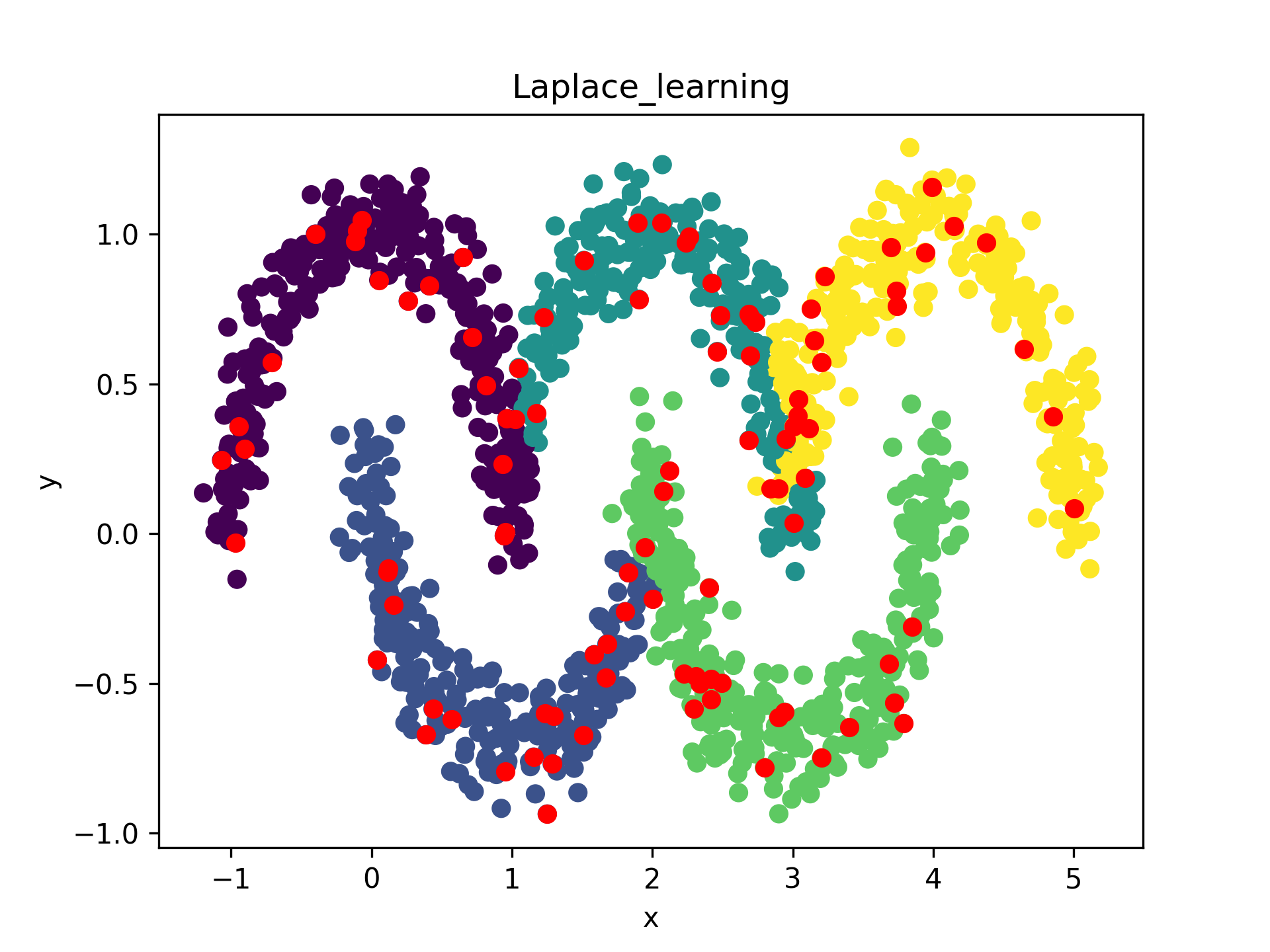}
	\caption{Comparison of Laplace, Poisson and Segregation learning algorithms for 5 classes and  initial 20 labels per class.}	
	\label{fig15}		
\end{figure}

\begin{table}[h]
	\caption{Average accuracy scores over 100 trials for 3 classes on MNIST dataset.} 
	\centering 
	\begin{tabular}{cllllll} 
		\hline\hline 
		Labels per class &\textbf{2} & \textbf{3} & \textbf{4} & \textbf{5}& \textbf{10}&  \\ [0.5ex]
		\hline 
		Laplace learning &31.3 & 45.4 & 58.2 & 67.7 & 83.4\\
		Poisson learning & 93.6  &  94.5  & 94.9 & 95.3 & 96.7&\\
	    Segregation learning  & \textbf{90.3}  & \textbf{92.1}  & \textbf{93.5} & \textbf{95.4} & \textbf{96.2} &\\[1ex] 
		\hline\hline 
	\end{tabular}
	\label{table_1}
\end{table}

\begin{table}[h]
	\caption{Average accuracy scores over 100 trials for 3 classes on MNIST dataset.} 
	\centering 
	\begin{tabular}{c llllll} 
		\hline\hline 
		Labels per class &\textbf{20} & \textbf{40} & \textbf{80} & \textbf{100}& \textbf{120}&\\ [0.5ex]
		\hline 
	Laplace learning &88.6  & 91.3 & 93.7 & 95.2  & 97.6\\
	Poisson learning & 95.6  &  97.2  & 97.9 & 98.3 & 99.4&\\
	Segregation learning  & \textbf{94.3}  & \textbf{96.7}  & \textbf{98.2} & \textbf{98.8} & \textbf{99.2} &\\[1ex] 
		\hline\hline 
	\end{tabular}
	 \label{table_2}
\end{table}

\section{Conclusion}
In this work we develop several semi-supervised algorithms on graphs. Our main algorithm is a new approach for graph-based semi-supervised learning based on spatial segregation theory. The method is efficient and simple to implement.
 We presented numerical results showing that Segregation Learning performs more or less as Poisson Learning algorithm not only at high label rates, but also at low label rates on MNIST dataset.

\renewcommand{\refname}{REFERENCES }

\section{Appendix}
In this section, we record two important statements in graphs, Poincar\'e inequality and maximum principle for superharmonic functions.

\begin{proposition}[Poincar\'e inequaity]\label{prop:Poincare inequaity}
	Assume the graph $\cX$ is connected. For every $ \Gamma  \subset  \cX $, there exists constant $\lambda_1>0$, the first eigenvalue of Laplacian, such that
	\[
	\lambda_1\|u\|_{\ell^2(\cX)}\leq \|\nabla u\|_{\ell^2(\cX^2)}
	\]
	for all $u\in\ell^2(\cX)$ satisfying $u=0$ on $\Gamma$.
\end{proposition}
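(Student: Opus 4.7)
My plan is to prove the Poincar\'e inequality by a variational/compactness argument, taking $\lambda_1$ to be the optimal constant. Implicitly the statement requires $\Gamma \neq \emptyset$ (otherwise constant functions defeat the inequality), so I would first note this and proceed with $\Gamma \neq \emptyset$.

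First I would define the admissible class
\[
\mathcal{A} := \{u \in \ell^2(\cX) : u = 0 \text{ on } \Gamma, \; \|u\|_{\ell^2(\cX)} = 1\},
\]
and set
\[
\lambda_1 := \inf_{u \in \mathcal{A}} \|\nabla u\|_{\ell^2(\cX^2)}^2.
\]
Homogeneity then gives $\lambda_1 \|u\|_{\ell^2(\cX)}^2 \le \|\nabla u\|_{\ell^2(\cX^2)}^2$ for every $u$ vanishing on $\Gamma$, so everything reduces to showing $\lambda_1 > 0$.

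Next I would use compactness: since $\cX$ is finite, $\ell^2(\cX)$ is a finite-dimensional Euclidean space, and $\mathcal{A}$ is a closed and bounded, hence compact, subset. The map $u \mapsto \|\nabla u\|_{\ell^2(\cX^2)}^2$ is continuous (in fact a quadratic form), so the infimum is attained at some $u_\star \in \mathcal{A}$. In particular $\lambda_1 = \|\nabla u_\star\|_{\ell^2(\cX^2)}^2 \ge 0$.

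The heart of the argument, and the only place that can fail, is showing $\lambda_1 > 0$. Suppose for contradiction that $\lambda_1 = 0$, so $\|\nabla u_\star\|_{\ell^2(\cX^2)}^2 = \tfrac12 \sum_{x,y \in \cX} w_{xy}(u_\star(y)-u_\star(x))^2 = 0$. Then $u_\star(x) = u_\star(y)$ whenever $w_{xy} > 0$, i.e.\ whenever $x$ and $y$ are joined by an edge. Since $\cX$ is connected, any two vertices are linked by a path of edges, and propagating the equality along such a path shows $u_\star$ is constant on $\cX$. But $u_\star = 0$ on the non-empty set $\Gamma$, so $u_\star \equiv 0$, contradicting $\|u_\star\|_{\ell^2(\cX)} = 1$. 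Hence $\lambda_1 > 0$, which is the desired conclusion. I expect this connectedness step to be the only subtle point; everything else is standard linear algebra on a finite-dimensional space.
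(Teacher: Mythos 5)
Your proof is correct and essentially the same as the paper's: both arguments reduce to finite-dimensional compactness plus the observation that a function with vanishing gradient on a connected graph is constant, hence zero by the boundary condition. The paper phrases it as a contradiction with a normalized minimizing sequence rather than directly attaining the infimum, but the content is identical; your explicit remark that $\Gamma\neq\emptyset$ is needed is a point the paper leaves implicit.
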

\begin{proof}
	By the contradiction, we may  assume that there exist the sequence $u_n$ such that
	\[
	\|\nabla u_n\|_{\ell^2(\cX^2)}\leq \frac1n\|u_n\|_{\ell^2(\cX)},\quad u_n=0\text{ on }\Gamma.
	\]
	Let $\hat u_n:=u_n/\|u_n\|$, then $\|\nabla \hat u_n\|\rightarrow0$. The sequence $\{\hat u_n(x)\}_n$  is uniformly bounded ($|\hat u_n(x)|\leq \|\hat u_n\|=1$) and so there is a subsequence   $\hat u_{n_i}$ and the limit function $u$ such that $\hat u_{n_i}(x)\rightarrow u(x)$ for every $x\in \cX$.
	Hence, $\nabla \hat u_{n_i}\rightarrow \nabla u$, and so $\nabla u=0$ which yields that $u$ is a constant function on $\cX$. On the other hand, from the boundary data $ \hat u_{n_i}=0$ on $\Gamma$ we obtain that $u=0$ on $\Gamma$ and so $u=0$ on  all of the graph. This contradicts the condition $\|u\|=\lim_{n_i\rightarrow\infty}\|\hat u_{n_i}\|=1$.
\end{proof}

\begin{proposition}[Maximum principle]\label{Maximum principle}
	Assume the graph $\cX$ is connected.
	Let $p(x)$ be a nonnegative function on $\cX$, and $u$ satisfies $\cL u+p(x)u\geq0$ in $\cX\setminus\Gamma$.
	If $u\geq0$ on $\Gamma$, then $u\geq0$ in $\cX$.
\end{proposition}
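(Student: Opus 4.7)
The plan is a straightforward contradiction argument based on the strong maximum principle idea: I assume $u$ takes a negative value somewhere and then use the differential inequality to propagate this negativity along the graph until it hits the boundary $\Gamma$, where it contradicts the hypothesis. Throughout I will assume $\Gamma \neq \emptyset$ (otherwise the hypothesis $u \geq 0$ on $\Gamma$ is vacuous and the statement should be interpreted accordingly).

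First I would suppose, towards a contradiction, that $u(x_0) = \min_{x \in \cX} u(x) < 0$ for some $x_0 \in \cX$; since $u \geq 0$ on $\Gamma$, necessarily $x_0 \in \cX \setminus \Gamma$. Using only that $x_0$ is a global minimizer, the graph Laplacian satisfies
\[
\cL u(x_0) = \sum_{y \in \cX} w_{x_0 y}\bigl(u(x_0) - u(y)\bigr) \leq 0,
\]
because every summand is nonpositive. On the other hand, the hypothesis $\cL u(x_0) + p(x_0) u(x_0) \geq 0$ combined with $u(x_0) < 0$ and $p(x_0) \geq 0$ forces $\cL u(x_0) \geq -p(x_0)u(x_0) \geq 0$. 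Thus $\cL u(x_0) = 0$ and moreover $p(x_0)u(x_0) = 0$, so that $p(x_0) = 0$.

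The identity $\cL u(x_0) = 0$ is the engine of the argument: since each summand in the defining sum is nonpositive and their sum is zero, every term vanishes. Hence for every neighbor $y$ of $x_0$ (i.e.\ every $y$ with $w_{x_0 y} > 0$) one has $u(y) = u(x_0) < 0$. In particular no such neighbor can lie in $\Gamma$; otherwise we would already contradict $u \geq 0$ on $\Gamma$. So every neighbor $y$ of $x_0$ lies in $\cX \setminus \Gamma$, and the whole local argument above applies verbatim at $y$, yielding $\cL u(y) = 0$ and $u(z) = u(x_0)$ for every neighbor $z$ of $y$.

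To close, I invoke connectedness of $\cX$: pick any $w \in \Gamma$ and a path $x_0, y_1, y_2, \ldots, y_q = w$ in the graph. Iterating the propagation step along this path, we find $u(y_j) = u(x_0) < 0$ for every $j$, including $j = q$. But $y_q \in \Gamma$ and $u(y_q) \geq 0$, a contradiction. The main (and only) non-routine point is the propagation step, and it hinges on the sign condition $p \geq 0$ combined with the assumption $u(x_0) < 0$, which is exactly what allows us to upgrade $\cL u(x_0) \geq 0$ from the PDE with $\cL u(x_0) \leq 0$ from the minimum property into the equality $\cL u(x_0) = 0$. Without the nonnegativity of $p$ (or if the inequality were strict in the wrong direction), this step would fail.
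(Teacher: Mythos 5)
Your proof is correct, but it takes a genuinely different route from the paper. You run a pointwise \emph{strong minimum principle} argument: at a global negative minimizer $x_0\in\cX\setminus\Gamma$ the two inequalities $\cL u(x_0)\le 0$ (from minimality) and $\cL u(x_0)\ge -p(x_0)u(x_0)\ge 0$ (from the equation and $p\ge 0$, $u(x_0)<0$) pinch $\cL u(x_0)$ to zero, which forces every neighbor to attain the same minimum value; connectedness then propagates the negative value along a path to $\Gamma$, a contradiction. The paper instead gives a \emph{weak maximum principle via energy}: it tests the inequality against $v=\max(-u,0)$, applies the Green-type identity \eqref{green-formula} to write $0\le(\cL u+pu,v)$ as a sum of manifestly nonpositive terms over $A^-=\{u<0\}$ and the edges joining $A^-$ to $A^+=\{u\ge 0\}$, and concludes that all such edge weights vanish, so $A^-\ne\emptyset$ would disconnect the graph. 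Your argument is more elementary (no summation by parts) and yields the stronger conclusion that a negative minimum would have to be constant along connected components, at the cost of a slightly longer propagation bookkeeping; the paper's argument is shorter and matches the variational machinery used elsewhere in the text. Both proofs quietly need $\Gamma\ne\emptyset$ (the paper uses it to guarantee $A^+\ne\emptyset$), and you are right to flag this explicitly. One point worth stating a little more carefully in your write-up: the propagation step at a neighbor $y$ uses that $y$ is again a \emph{global} minimizer (which holds since $u(y)=u(x_0)=\min u$) and that $y\in\cX\setminus\Gamma$ so the differential inequality is available there; you do address both, so the argument is complete.
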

\begin{proof}
	Define $A^+:=\{x\in \cX: u(x)\geq0\}$ and $A^-:=\{x\in \cX:u(x)<0\}$.
	Let $v:=\max(-u,0)$ and multiply by the equation to get
	\begin{align*}
	0\leq(\cL u+pu,v)=&(\nabla u,\nabla v)_{\ell^2(\cX^2)}+(pu,v)\\
	=&\frac12\sum_{x,y\in \cX}w_{xy}(u(x)-u(y))(v(x)-v(y))+\sum_{x\in \cX}p(x)u(x)v(x)\\
	=&-\frac12\sum_{x,y\in A^-}w_{xy}(u(x)-u(y))^2-\sum_{x\in A^-, y\in A^+}w_{xy}(u(x)-u(y))u(x)\\
	&-\sum_{x\in A^-}p(x)(u(x))^2\leq 0
	\end{align*}
	If $A^-\neq \emptyset$, we can find an edge between $A^+$ and $A^-$ due to the connectedness of $G$. (Note that the boundary condition ensures that $A^+\neq\emptyset$.)
	But the second term in the above calculation yields that $w_{xy}=0$ for every $x\in A^-$ and $y\in A^+$.
	Therefore, we must have $A^-=\emptyset$.
\end{proof}

\end{document}